\documentclass[12pt,a4paper]{article}
\usepackage[T1]{fontenc}
\usepackage{amsfonts}
\usepackage{amssymb,amsmath,amsbsy,amsthm}
\IfFileExists{newtxtext.sty}
  {\usepackage{newtxtext,newtxmath}}
  {\usepackage{mathptmx}}
\usepackage{blkarray}
\usepackage{tikz}
\usepackage{tkz-euclide}
\usepackage{tikz-cd}
\usepackage{enumerate}
\usetikzlibrary{patterns,arrows.meta,calc,positioning,decorations.markings}
\usepackage{graphicx}
\usepackage{float}
\usepackage{cite}
\usepackage{mathtools}
\usepackage{indentfirst}
\usepackage{lineno}
\usepackage{titlesec}
\usepackage{enumitem}
\usepackage{xcolor}
\usepackage{colortbl}
\definecolor{tensorhighlight}{gray}{0.92}
\usepackage{aliascnt}
\usepackage{varioref}
\usepackage{microtype}
\usepackage{needspace}
\usepackage{xurl}
\usepackage{hyperref}
\hypersetup{
  colorlinks=true,
  linkcolor=cyan,
  anchorcolor=cyan,
  citecolor=red,
  urlcolor=black,
  pdfencoding=auto,
  pdftitle={Represented Tensor Products of Binary Matroids},
  pdfauthor={Houshan Fu, Yujiao Ma, Suijie Wang},
  pdfsubject={Classification of represented tensor products of binary matroids},
  pdfkeywords={represented tensor product, binary matroid, regular matroid, cographic matroid, series-parallel matroid, totally unimodular matrix},
}
\usepackage[nameinlink,capitalise,noabbrev]{cleveref}

\numberwithin{equation}{section}
\DeclareMathOperator{\rank}{rank}
\DeclareMathOperator{\col}{col}
\DeclareMathOperator{\diag}{diag}
\DeclareMathOperator{\row}{row}

\newcommand{\F}{\mathbb F}

\newcommand{\QQ}{\mathbb Q}

\newtheorem{theorem}{Theorem}[section]
\newtheorem*{maintheorem}{Main Theorem}
\newcommand{\mainthm}{\hyperref[thm:regular-two-factor]{Main Theorem}}
\newaliascnt{lemma}{theorem}
\newtheorem{lemma}[lemma]{Lemma}
\aliascntresetthe{lemma}

\newaliascnt{proposition}{theorem}
\newtheorem{proposition}[proposition]{Proposition}
\aliascntresetthe{proposition}

\newaliascnt{corollary}{theorem}
\newtheorem{corollary}[corollary]{Corollary}
\aliascntresetthe{corollary}

\newaliascnt{claim}{theorem}

\aliascntresetthe{claim}

\newaliascnt{conjecture}{theorem}

\aliascntresetthe{conjecture}

\theoremstyle{definition}
\newtheorem{fact}{Fact}[section]
\crefname{fact}{Fact}{Facts}
\Crefname{fact}{Fact}{Facts}
\newaliascnt{definition}{theorem}

\aliascntresetthe{definition}

\newaliascnt{example}{theorem}
\newtheorem{example}[example]{Example}
\aliascntresetthe{example}

\newaliascnt{notation}{theorem}

\aliascntresetthe{notation}

\newaliascnt{question}{theorem}

\aliascntresetthe{question}

\theoremstyle{remark}
\newaliascnt{remark}{theorem}
\newtheorem{remark}[remark]{Remark}
\aliascntresetthe{remark}

\tikzset{
  graphvertex/.style={circle,fill,inner sep=1.35pt},
  leafvertex/.style={circle,draw=black,fill=black,inner sep=1.35pt},
  facevertex/.style={circle,draw,fill=white,inner sep=1.5pt,font=\scriptsize},
  graphlabel/.style={font=\scriptsize,inner sep=1pt}
}
\begin{document}
\begin{center}
{\Large\bf
Represented Tensor Products of Binary Matroids}\\[10pt]
\end{center}

\begin{center}
Houshan Fu\quad Yujiao Ma\quad Suijie Wang
\end{center}

\begin{abstract}
For binary matroids \(M,N\) representable over a common field
\(\F\), the Kronecker product of their \(\F\)-representations defines a
matroid \(T_{\F}(M,N)\) independent of the chosen representations.
We classify when this represented tensor product is regular, cographic,
graphic, or binary.  For simple nonfree factors, regularity holds exactly
when, up to interchange, one factor is a cactus matroid and the other
is outerplanar, or one is a triangular cactus matroid and the other
is series--parallel.  Cographicity holds exactly in the first case.
For simple factors with nonempty ground sets, graphicity holds exactly
when one factor is free and the other is graphic.
For fixed factors, regularity, cographicity, and graphicity are
independent of the common representation field, although the
isomorphism type may vary with its characteristic.
Products of at least three simple nonfree factors are nonregular.
\vspace{1ex}\\
\noindent\textbf{Keywords:} represented tensor product; binary matroid;
regular matroid; cographic matroid; series--parallel matroid;
totally unimodular matrix
\vspace{1ex}\\
\noindent\textbf{Mathematics Subject Classification (2020):}
Primary 05B35; Secondary 05C10, 15A69.
\end{abstract}

\section{Introduction}\label{sec:introduction}
Let \(M,N\) be finite binary matroids representable over a common
field \(\F\).
Choose \(\F\)-representations \(A\) and \(B\), with columns labelled
by \(E(M)\) and \(E(N)\), respectively.  We study the matroid
\[
 T_{\F}(M,N):=M[A\otimes B]
\]
on \(E(M)\times E(N)\), where the column indexed by \((e,f)\) is
\(\boldsymbol a_e\otimes\boldsymbol b_f\).  Binary matroids are
uniquely representable over each representation field up to projective equivalence
\cite{brylawski-lucas}; see also \cite[Proposition~6.6.5]{oxley}.
Consequently, the product does not depend on the chosen
representations.  We call this construction the \emph{represented tensor product}.
It is a tensor product in the abstract sense of Las Vergnas
\cite{las-vergnas-products}; see Section~\ref{subsec:tensor-preliminaries}.
The common-field hypothesis allows arbitrary binary factors in
characteristic two, whereas in other characteristics both factors must
be regular \cite[Theorem~6.6.3]{oxley}.
Our aim is to characterize when it is regular, cographic, graphic, or binary.

To state the main result, call a simple matroid a \emph{cactus matroid}
if each connected component is a circuit matroid or a single coloop.
A cactus matroid is \emph{triangular} if every circuit component has
size three.  A simple binary matroid is called \emph{series--parallel} if
it has no \(M(K_4)\)-minor, and \emph{outerplanar} if it has neither
an \(M(K_4)\)- nor an \(M(K_{2,3})\)-minor.  These terms agree with
the corresponding graph classes, as shown in
Lemma~\ref{lem:regular-sp-outerplanar}.  A free matroid is one in
which every subset is independent.

\begin{maintheorem}\phantomsection\label{thm:regular-two-factor}
Let \(M,N\) be simple binary matroids on nonempty ground sets,
representable over a common field \(\F\).  The following statements hold.
\begin{enumerate}[label=\textup{(\roman*)},leftmargin=2.5em]
\item \(T_{\F}(M,N)\) is regular if and only if either one factor is
free and the other is regular, or both are nonfree and, after possibly exchanging them,
one of the following holds:
\[
 \begin{array}{ll}
 \textup{(R1)}&M\text{ is a cactus matroid and }N\text{ is outerplanar},\\
 \textup{(R2)}&M\text{ is a triangular cactus matroid and }
                  N\text{ is series--parallel}.
 \end{array}
\]
\item \(T_{\F}(M,N)\) is cographic if and only if either one factor
is free and the other is cographic, or both are nonfree and
\textup{(R1)} holds after possibly exchanging the factors.
\item \(T_{\F}(M,N)\) is graphic if and only if one factor is free
and the other is graphic.
\item \(T_{\F}(M,N)\) is binary if and only if
\(\operatorname{char}(\F)=2\), or \(T_{\F}(M,N)\) is regular.
\end{enumerate}
\end{maintheorem}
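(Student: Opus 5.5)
The plan is to reduce everything to excluded-minor arguments combined with explicit totally unimodular constructions.

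\emph{Minors.} If \(M'\) is a minor of \(M\), then \(T_{\F}(M',N)\) is a minor of \(T_{\F}(M,N)\). Deletion is clear: delete all columns \((e,f)\) with \(e\in D\). For contraction of an element \(e\), pass to the quotient of the row space of \(A\) by \(\boldsymbol a_e\). Then \(\boldsymbol a_e\otimes B\) spans \(\boldsymbol a_e\otimes\F^{r(N)}\), and contracting the set \(\{e\}\times E(N)\) gives \(A/e\otimes B\) followed by deletion of parallel or loop classes. Hence regularity, cographicity, graphicity and binarity are all inherited by \(T_{\F}(M',N)\) from \(T_{\F}(M,N)\).

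\emph{The free case.} If \(M\) is free, \(A=I_m\), so \(A\otimes B\) is the direct sum of \(m\) copies of \(B\). The product is then a direct sum of copies of \(N\), and is regular, cographic or graphic exactly when \(N\) is.

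\emph{Sufficiency in (i)--(ii).} Both factors are now nonfree. Because the product is bilinear in direct sums, \(T(M_1\oplus M_2,N)=T(M_1,N)\oplus T(M_2,N)\), and it suffices to take \(M\) connected: a circuit \(U_{k-1,k}\) (coloops being free summands). For \(M=U_{k-1,k}\), take \(A=[I_{k-1}\mid -\mathbf 1]\). Then \(A\otimes B\) consists of \(k-1\) copies of \(B\) in block-diagonal position together with an extra block \(-\mathbf 1\otimes B\). For (R1) I would take \(B\) to be the network matrix of an outerplanar graph \(G\) and exhibit \(A\otimes B\) as the cographic (or planar-dual) matroid of an explicit graph. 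That graph is obtained by gluing \(k\) copies of the plane dual of \(G\) along the outer-face vertex, in the spirit of a "cyclic cover" of the outerplanar embedding. Total unimodularity then follows from graphicity of the dual. For (R2), with \(k=3\) and \(N\) series--parallel, I would instead prove total unimodularity directly by induction along series and parallel extensions of \(N\). A series or parallel extension of \(N\) corresponds to a 2-sum, or to adding a row and column to \(B\), and I would check that \([I_2\mid -\mathbf 1]\otimes B\) stays TU under these operations, using Camion/Ghouila-Houri on the new rows.

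\emph{Necessity.} By the minor lemma this becomes a finite list of minimal obstructions, each of which I would check by hand or by a small computation. A nonfree simple binary \(M\) that is not a cactus contains, as a minor, two circuits sharing an edge, i.e.\ \(M(K_4\setminus e)\) or \(U_{2,3}\oplus\)-type theta minors; more precisely, a non-cactus connected binary matroid has a \(\theta\)-graph restriction/minor. A non-triangular cactus has a \(U_{3,4}\)-circuit minor. Nonfree \(N\) contains \(U_{2,3}\). The obstructions to verify are therefore:
\begin{itemize}[leftmargin=2em]
\item \(T(\text{theta},U_{2,3})\) is nonregular;
\item \(T(U_{3,4},M(K_4))\) and \(T(U_{2,3},M(K_4))\) are nonregular;
\item \(T(U_{3,4},M(K_{2,3}))\) is nonregular;
\item \(T(U_{2,3},M(K_{2,3}))\) is regular but not cographic;
\item \(T(U_{2,3},U_{2,3})\) is nongraphic, and it is regular.
\end{itemize}
For each, I would exhibit an \(F_7\) or \(F_7^*\) minor (a \(U_{2,4}\) minor in characteristic \(\neq2\)), or respectively an \(M(K_5)\), \(M(K_{3,3})\) or cographic excluded minor. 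Part (iii) follows from \(T(U_{2,3},U_{2,3})\) being nongraphic (I expect an \(M^*(K_{3,3})\) minor), since every nonfree simple factor has a \(U_{2,3}\) minor. Part (iv) follows because in characteristic not \(2\) a binary \(\F\)-represented matroid is regular.

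\emph{Main obstacle.} I expect the hardest step to be sufficiency, namely the explicit graph realization for (R1) and the TU induction for (R2). The first thing I would try is a concrete cographic description via duality of the cyclic block structure.
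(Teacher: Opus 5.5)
\textbf{Gap in the necessity argument.} Your overall architecture matches the paper's: inheritance under factor minors, the free case, splitting into components, a cographic realization for a circuit against an outerplanar graph, a Ghouila--Houri induction for a triangle against a series--parallel network, and a finite list of excluded pairs. The necessity argument, however, rests on a false obstruction.

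You assert that \(T_{\F}(\theta,U_{2,3})\) is nonregular for a theta minor \(\theta\). This is false. Every simple theta graph is series--parallel, so this product satisfies \textup{(R2)} and is regular. The smallest simple theta, \(K_4-e\), is even outerplanar, and \(M_{\F}(C_3,K_4-e)\) is cographic; it is exactly the paper's worked example \(X_{3,G}\) for a square with one diagonal. Your own sufficiency direction contradicts this obstruction, and if it held it would force both nonfree factors to be cacti.

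What actually forces one factor to be a cactus is a pair in which both factors are non-cacti. The paper shows that \(M_{\F}(K_4-e,K_4-e)\) has a \(U_{2,4}\)-minor in characteristic other than two and an \(F_7\)-minor in characteristic two. With this pair in place of yours, necessity runs as follows.
\begin{enumerate}
\item The pair \((U_{2,3},M(K_4))\), in both orders, makes both factors series--parallel.
\item Every non-cactus factor has an \(M(K_4-e)\)-minor (your theta argument gives this). So the pair \((M(K_4-e),M(K_4-e))\) makes one factor, say \(M\), a cactus.
\item If \(M\) has a circuit component of size at least four, its \(U_{3,4}\)-minor and the pair \((U_{3,4},M(K_{2,3}))\) make \(N\) outerplanar, giving \textup{(R1)}. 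Otherwise \textup{(R2)} holds.
\end{enumerate}
Your pair \((U_{3,4},M(K_4))\) is redundant, since it is implied by \((U_{2,3},M(K_4))\).

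\textbf{The (R1) graph is misdescribed.} Identifying \(k\) copies of \(G^*\) at the outer-face vertex produces a \(1\)-sum. Its bond matroid is \(M(G)^{\oplus k}\), of rank \(k(|V(G)|-1)\) rather than \((k-1)(|V(G)|-1)\). For \(G=C_n\) it gives \(U_{n-1,n}^{\oplus k}\) instead of \(M^*(K_{k,n})\).

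The paper's construction differs. First split \(N\) into blocks as well, so that \(G\) is \(2\)-connected and its outer boundary is a Hamilton cycle. Then delete the outer-face vertex, attach a separate leaf \(x_e\) for each outer edge \(e\), and identify the \(k\) copies of each \(x_e\). Each vertex bond \(\partial_G(v)\) becomes a path between two shared leaves. Layers \(i\) and \(k\) then give a cycle with vector \((\boldsymbol e_i-\boldsymbol e_k)\otimes\boldsymbol b_G(v)\), and a dimension count shows these vectors span the cycle space.

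\textbf{Missing case in (iv).} Part \textup{(iv)} also needs the characteristic-two direction. With \(\operatorname{GF}(2)\)-representations, the Kronecker matrix has entries in \(\{0,1\}\) and the same column matroid over \(\F\) as over \(\operatorname{GF}(2)\), so the product is binary.
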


For fixed \(M,N\), parts~\textup{(i)--(iii)} show that regularity,
cographicity, and graphicity of \(T_{\F}(M,N)\) are independent of
the common representation field.  Binarity is automatic in characteristic
two and, by part~\textup{(iv)}, is also independent of the choice of
common representation field of characteristic different from two.
Products over fields of the same characteristic agree by
Lemma~\ref{lem:prime-field-reduction}.  Their isomorphism type may
vary with the characteristic; see
Remark~\ref{ex:regular-characteristic}.

Las Vergnas developed the abstract theory and showed that
not every pair of matroids admits a tensor product
\cite[Proposition~2.1]{las-vergnas-products}.  The tensor product of
vector representations always gives an abstract tensor product
\cite[p.~50]{las-vergnas-products}.  Anderson expressed this construction
using Kronecker matrices and studied its relation to tropical geometry,
including the possible dependence on representations for general
representable factors \cite[Section~2.1]{anderson-matroid-products}.
B\'erczi, Geh\'er, Imolay, Lov\'asz, Maga, and Schwarcz studied matroid
products through submodular coupling \cite{berczi-matroid-products}.
B\'erczi, Geh\'er, Imolay, Lov\'asz, Padr\'o, and Schwarcz developed
connections with skew-representability, extension properties, and
rank inequalities \cite{berczi-skew-tensor}; Lemma~7.14 of their
expanded version identifies the tensor product of \(U_{2,3}\) with
itself as \(M^*(K_{3,3})\).

Incidence Kronecker matrices were studied by Hanusa and Zaslavsky
\cite{hanusa-zaslavsky}.  Plesken and B\"achler considered the matroid
represented by
\([I_4\mid-\boldsymbol1_4]\otimes[I_2\mid-\boldsymbol1_2]\)
\cite[Example~4.13]{plesken-baechler}, a represented product of the
cycle matroids of \(C_5\) and \(C_3\).
Martin and Reiner \cite[Theorem~1 and Section~2]{martin-reiner}
relate represented products of circuits of distinct prime sizes over
\(\QQ\) to duals of complete multipartite simplicial matroids.
Hidaka and Itoh \cite[Theorem~1.2 and Section~5.2]{hidaka-itoh}
prove total unimodularity of the standard tensor representation of
two circuits and record its cographic interpretation in characteristic
zero.  Related constructions include higher order independence and symmetric
powers of vector configurations \cite{baclawski-white}, tensor and
symmetric tensor matroids arising in rigidity and matrix completion
\cite{brakensiek-rigidity,jackson-tanigawa-symmetric-tensor}, and
polymatroid tensor products \cite{padro-polymatroid-tensor}.
Tyomkyn \cite{tyomkyn-linear-products} studies linearly representable
tensor products of uniform matroids through their duality with abstract
birigidity matroids.

When both factors have positive rank, each is isomorphic to a restriction
of the product, so a regular product has regular factors.
Every simple nonfree matroid has a \(U_{2,3}\)-minor.  By the
factor-minor property and Proposition~\ref{prop:specified-minors}\textup{(i)},
regularity of the product of two such factors excludes an \(M(K_4)\)-minor
in either factor.  Both are therefore series--parallel graphic matroids
\cite{brylawski-series-parallel,bonin-long}.
The reduction and the remaining necessity arguments use the minor
containments in Proposition~\ref{prop:specified-minors}\textup{(i)--(ii)}
for the pairs \((C_3,K_4)\), \((K_4-e,K_4-e)\),
\((C_4,K_{2,3})\), and \((C_3,K_{2,3})\).
These containments, proved in Section~\ref{sec:minor-proofs}, also yield
the forbidden-factor characterization in
Corollary~\ref{cor:regular-forbidden-pairs}.

For graphs \(G,H\), write
\(M_{\F}(G,H)=T_{\F}(M(G),M(H))\).  For \(m\ge3\) and a
\(2\)-connected outerplanar simple graph \(G\), we construct a graph \(X_{m,G}\) with
\(M_{\F}(C_m,G)\cong M^*(X_{m,G})\).  For two cycles, this recovers
the complete bipartite graph realization:
\[
 M_{\F}(C_m,C_n)\cong M^*(K_{m,n})\qquad(m,n\ge3).
\]
For a triangle and a series--parallel network, edge subdivisions and
parallel-edge additions yield a totally unimodular tensor
representation.  These constructions give the two nonfree regular cases in
the \mainthm{}.

A free factor contributes only direct sum multiplicities.  For simple
factors with nonempty ground sets, a product with at least three nonfree
factors has
\(T_{\F}(U_{2,3},U_{2,3},U_{2,3})\) as a minor and is nonregular by
Proposition~\ref{prop:specified-minors}\textup{(iii)}.  This gives
Theorem~\ref{thm:regular-higher};
Proposition~\ref{prop:regular-simplification} then extends the
classification to factors with loops or parallel elements.

Section~\ref{sec:preliminaries} collects the preliminaries.
Section~\ref{sec:positive-constructions} develops the tools for the
classification: the reduction to graphic factors, the required minor
containments, and the cographic and totally unimodular constructions.
Section~\ref{sec:regular-matroids} gives the classifications,
including the proof of the \mainthm{} in
Section~\ref{subsec:regular-two-factor-proof}.
Section~\ref{sec:minor-proofs} proves the minor containments stated
in Section~\ref{sec:positive-constructions}.

\section{Preliminaries and basic properties}\label{sec:preliminaries}
Throughout the paper, write \([n]=\{1,\ldots,n\}\) for a positive
integer \(n\).  For a matrix \(A\) over a field \(\F\), let
\(\rank(A)\), \(\ker(A)\), \(\row(A)\), and \(\col(A)\)
denote its rank, kernel, row space, and column space, respectively.
If the columns of \(A\) are labelled by \(E\), then \(A_X\)
denotes the submatrix on the columns indexed by \(X\subseteq E\).

Integer matrices are interpreted over \(\F\) via the canonical
homomorphism \(\mathbb Z\to\F\); we write \(A^{\F}\) when
the field needs to be indicated.  We use \(I_n\) for the identity
matrix, \(\boldsymbol{0}_n,\boldsymbol{1}_n\in\F^n\) for the
all-zero and all-one vectors, and \(\boldsymbol{e}_i^{(n)}\) for
the \(i\)-th standard basis vector, omitting the superscript when
the dimension is clear.

\subsection{Matroid and graph preliminaries}\label{subsec:matroid-preliminaries}
All matroids considered are finite.  The terminology follows
Oxley \cite{oxley}.  For a matroid \(M\),
write \(E(M)\) for its ground set, \(r_M\) for its rank function,
\(r(M)=r_M(E(M))\), and \(M^*\) for its dual.  We use
\(M|X\), \(M\backslash X\), and \(M/X\) for restriction, deletion,
and contraction.  A \emph{minor} is obtained by deletions and contractions.
We write \(M_1\oplus M_2\) for a direct sum, \(U_{r,n}\) for the
rank-\(r\) uniform matroid on \(n\) elements, and \(F_7\) for the
Fano matroid.  For \(n\ge0\),
\(N^{\oplus n}\) is the direct sum of \(n\) copies of \(N\) on disjoint
ground sets, with \(N^{\oplus0}\) the empty matroid.  A pair
\((N_1,N_2)\) occurs as \emph{factor minors} of \((M_1,M_2)\) if
\(N_i\) is isomorphic to a minor of \(M_i\) for \(i=1,2\).

A circuit is a minimal dependent set, and a cocircuit is a circuit of
the dual.  An element is a loop or a coloop if its singleton is a
circuit or a cocircuit, respectively.  Two distinct elements are
parallel or in series if they form a circuit or a cocircuit, respectively.
A matroid is \emph{simple} if it has no loops or parallel elements,
and \emph{free} if it is \(U_{q,q}\) for some \(q\ge0\).
Its \emph{simplification} \(\operatorname{si}(M)\) deletes all loops
and retains one element from each nonloop parallel class; we regard
it as a restriction on specified representatives.

A nonempty matroid is \emph{connected} if it is not a direct sum of
two nonempty matroids.  Every matroid is the direct sum of its
connected components \cite[Corollary~4.2.9]{oxley}.  For
\(e\in E(M)\) and \(p\notin E(M)\), a matroid \(N\) on
\(E(M)\cup\{p\}\) is a
\emph{parallel extension} of \(M\) at \(e\) if
\(N\backslash p=M\) and \(\{e,p\}\) is a circuit; it is a
\emph{series extension} at \(e\) if \(N/p=M\) and \(\{e,p\}\)
is a cocircuit \cite[Section~5.4, pp.~151--152]{oxley}.

For a matrix \(A=[\boldsymbol a_e:e\in E]\) over \(\F\), let
\(M[A]\) be its vector matroid on the labelled column set \(E\).
A matroid is \emph{graphic} if it is isomorphic to the cycle matroid
\(M(G)\) of a graph \(G\), and \emph{cographic} if it is isomorphic
to \(M^*(G)\) for some graph \(G\).  It is \emph{binary} if representable over
\(\operatorname{GF}(2)\) and \emph{regular} if representable over
every field.  A real matrix is \emph{totally unimodular} (TU) if all
its square subdeterminants lie in \(\{0,1,-1\}\); a matroid is
regular exactly when it has a TU real representation
\cite[Lemma~2.2.21 and Theorem~6.6.3]{oxley}.

A \emph{cactus matroid} is a simple matroid whose connected components are single
coloops \(U_{1,1}\) or circuit matroids \(U_{m-1,m}\) with \(m\ge3\).  It is
\emph{triangular} if all its circuit components are \(U_{2,3}\).
A simple binary matroid is \emph{series--parallel} if it has no
\(M(K_4)\)-minor, and \emph{outerplanar} if it also has no
\(M(K_{2,3})\)-minor.  These graph-related terms are justified in
Lemma~\ref{lem:regular-sp-outerplanar}.

All graphs are finite.  Graphs may have loops and parallel edges unless
stated otherwise.  For a graph \(G\), write \(V(G)\), \(E(G)\),
\(c(G)\), and \(M(G)\) for its vertex set, edge set, number of
components, and cycle matroid, respectively.  Put
\(r(G):=r(M(G))=|V(G)|-c(G)\) and let \(M^*(G)\) be its bond
matroid.  A \emph{\(2\)-connected} graph is \(2\)-vertex-connected
and has at least three vertices.  A \emph{block} is a maximal
connected subgraph with connected cycle matroid; in a simple graph,
each block with an edge is a maximal \(2\)-connected subgraph or a bridge.
A \emph{cyclic block} contains a cycle.  A \emph{cactus} is a graph
whose blocks with edges are single edges or cycles.  For simple graphs this
means that distinct cycles have at most one vertex in common.

An \emph{outerplanar} graph admits a plane embedding with every
vertex on the outer-face boundary; such an embedding is
\emph{outerplane}.  An \emph{edge subdivision} replaces an edge by
a two-edge path through a new vertex (a loop by a two-edge cycle),
and a \emph{parallel-edge addition} duplicates a nonloop edge.
Subdividing a nonbridge edge induces a series extension of its cycle
matroid, while subdividing a bridge does not; a graphic series
extension need not be an edge subdivision in a chosen graph
representation \cite[pp.~151--152 and Figure~5.13]{oxley}.
A \emph{series--parallel network} is the single-edge graph or a
graph obtained from a two-edge cycle by edge subdivisions and
parallel-edge additions \cite[Section~5.4, Exercise~8]{oxley}.

We use the following standard facts from Oxley
\cite[Chapters~3--6 and Section~12.2, Exercise~14(a)]{oxley}.
For the equivalent forbidden-subdivision characterization of outerplanar
graphs, see also Chartrand and Harary
\cite[Theorem~1]{chartrand-harary}.

\begin{fact}\label{fact:graph-minors}
For graphs \(G,H\), if \(H\) is a graph minor of \(G\), then
\(M(H)\) is isomorphic to a minor of \(M(G)\).
\end{fact}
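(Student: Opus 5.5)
It suffices to treat a single elementary graph-minor operation and then induct on the number of operations, since matroid minors are closed under composition. By definition, a graph minor \(H\) of \(G\) arises from \(G\) by a finite sequence of edge deletions, edge contractions, and deletions of isolated vertices. An isomorphism of graphs induces an isomorphism of cycle matroids, because it maps cycles to cycles. So it is enough to show two things: each single operation yields a cycle matroid that is a minor of the previous cycle matroid, and being isomorphic to a minor of a minor is again being isomorphic to a minor.

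\textbf{Deletion.} For an edge \(e\), the cycles of \(G\backslash e\) are exactly the cycles of \(G\) avoiding \(e\). Hence \(M(G\backslash e)=M(G)\backslash e\) on \(E(G)-e\).

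\textbf{Isolated vertices.} Deleting an isolated vertex changes no edge set of any cycle. Hence the cycle matroid is unchanged.

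\textbf{Contraction.} If \(e\) is a loop of \(G\), then contracting \(e\) amounts to deleting it as a graph operation, and \(M(G)/e=M(G)\backslash e\) because \(e\) is a matroid loop. So assume \(e\) is a nonloop edge with ends \(u\ne v\). Then \(G/e\) identifies \(u\) and \(v\) and removes \(e\); edges parallel to \(e\) become loops. We show \(M(G/e)=M(G)/e\) by comparing independent sets. A set \(X\subseteq E(G)-e\) is independent in \(M(G)/e\) exactly when \(X\cup e\) is a forest in \(G\), since \(\{e\}\) is independent. We claim this holds exactly when \(X\) is a forest in \(G/e\). If \(X\) contains a cycle \(C\) in \(G/e\), then either \(C\) is a cycle of \(G\), or \(C\) passes through the merged vertex and its preimage in \(G\) is a \(u\)–\(v\) path; adding \(e\) then closes a cycle in \(X\cup e\). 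Conversely, take a cycle \(D\) in \(X\cup e\). If \(D\) avoids \(e\), it remains a closed walk in \(G/e\) without repeated edges, so it contains a cycle of \(G/e\). If \(D\) uses \(e\), then \(D-e\) is a \(u\)–\(v\) path in \(G\), and it becomes a cycle in \(G/e\); when \(D-e\) is a single edge, that cycle is a loop.

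Rank arguments give an alternative route: one can use \(r(G)=|V|-c(G)\) and check that \(r_{G/e}(X)=r_G(X\cup e)-1\). The independence argument above is the more direct one.

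\textbf{Main obstacle.} The only delicate step is the contraction case, specifically the bookkeeping of cycles through the identified vertex and the loops created from edges parallel to \(e\). Everything else is immediate from the definitions.
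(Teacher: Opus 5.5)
Your proof is correct and takes essentially the same approach as the paper: the paper gives no proof of this fact but cites it from Oxley, where it rests on the identities \(M(G\backslash e)=M(G)\backslash e\) and \(M(G/e)=M(G)/e\). You prove these the same way (loop case separately, nonloop case by comparing forests in \(G/e\) with forests containing \(e\) in \(G\)), then induct over the deletions, contractions and isolated-vertex removals, and your treatment of cycles through the merged vertex and of edges parallel to \(e\) becoming loops covers the delicate cases.
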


\begin{fact}\label{fact:matroid-classes}
Every graphic or cographic matroid is regular, and every regular
matroid is binary.  Every binary matroid is representable over every
field of characteristic two.  A matroid representable over a field
of characteristic different from two is regular if and only if it is
binary.
\end{fact}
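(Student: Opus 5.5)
Fact~\ref{fact:matroid-classes} collects four standard statements, and I would verify each from definitions and standard theorems in Oxley rather than from anything new in this paper.

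\emph{Graphic and cographic matroids are regular.} Orient each edge of \(G\) arbitrarily and form the signed vertex--edge incidence matrix \(D_G\): the column of a nonloop edge has \(+1\) at its head, \(-1\) at its tail, and zeros elsewhere, and a loop gives a zero column. \(D_G\) is TU: in a square submatrix, either some column has at most one nonzero entry, and we expand along it, or every column has exactly two nonzeros, of opposite sign, so the rows sum to zero and the determinant vanishes. Induction on the size of the submatrix finishes this. Over any field, a set of columns of \(D_G\) is dependent exactly when the corresponding edges contain a cycle, because a cycle gives a signed \(\pm1\) relation and a forest admits a leaf-peeling argument. Hence \(M(G)\) is representable over every field, so it is regular. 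Duality preserves representability over each field, using the standard form \([I\mid A]\) and its dual \([-A^{T}\mid I]\). Therefore \(M^*(G)\) is regular as well.

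\emph{Regular matroids are binary.} This holds by definition, since \(\operatorname{GF}(2)\) is one of the fields.

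\emph{Binary matroids are representable over every field of characteristic two.} A \(\operatorname{GF}(2)\)-matrix \(A\) can be read over any field \(\F\supseteq\operatorname{GF}(2)\). Linear dependence of columns is decided by the ranks of submatrices, and ranks do not change under field extension. So \(M[A^{\F}]=M[A]\).

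\emph{Over a field of characteristic different from two, representable and binary implies regular.} One direction is immediate, since regular implies binary. For the converse, suppose \(M\) is binary and \(\F\)-representable with \(\operatorname{char}(\F)\neq2\). This is the step I expect to be the main obstacle, and I would cite Tutte's theorem \cite[Theorem~6.6.3]{oxley} rather than reprove it. Binarity excludes a \(U_{2,4}\)-minor. The Fano matroid \(F_7\) is not \(\F\)-representable when \(\operatorname{char}(\F)\ne2\): fixing a standard form shows that its seven lines force \(1+1=0\). Since \(F_7^*\) is the dual of \(F_7\), it is not \(\F\)-representable either. Thus \(M\) has none of \(U_{2,4}\), \(F_7\), \(F_7^*\) as a minor, and Tutte's excluded-minor characterization gives that \(M\) is regular.
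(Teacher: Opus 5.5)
Your proposal is correct and agrees with the paper, which offers no argument of its own and simply takes these four statements as standard facts from Oxley. Your sketches of the routine parts are the textbook ones: the totally unimodular incidence matrix, duality via \([I\mid A]\) and \([-A^{T}\mid I]\), and reading a \(\operatorname{GF}(2)\)-matrix over a larger field of characteristic two.

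The one variation is in the last assertion. There you pass through Tutte's excluded-minor theorem, tacitly using that \(\F\)-representability is closed under minors. The paper instead relies on the direct equivalence ``binary and representable over a field of characteristic \(\ne2\) \(\Leftrightarrow\) regular,'' which is what it invokes Oxley's Theorem~6.6.3 for in the introduction. Your detour is valid, but it uses a deeper result than is needed.
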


\begin{fact}\label{fact:minor-direct-sums}
The classes of graphic, cographic, binary, and regular matroids
are closed under minors and finite direct sums.
\end{fact}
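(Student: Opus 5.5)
The plan is to treat the four classes in two groups: the representable classes (binary, regular), which are handled by matrix operations, and the graph classes (graphic, cographic), which are handled through graphs and duality. Closure under minors reduces to closure under single-element deletion and contraction, since every minor arises from a finite sequence of these. Closure under finite direct sums reduces to the binary case \(M_1\oplus M_2\) by induction.

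For a field \(\F\), let \(M=M[A]\) with \(A\) an \(\F\)-matrix.
\begin{enumerate}[label=\textup{(\alph*)},leftmargin=2.5em]
\item Deletion: \(M\backslash e=M[A_{E\setminus\{e\}}]\), since independence of a set of columns does not involve the other columns.
\item Contraction of a loop: if \(e\) is a loop then \(M/e=M\backslash e\), so (a) applies.
\item Contraction of a nonloop: if \(\boldsymbol a_e\ne\boldsymbol 0\), row operations (which preserve \(M[A]\)) make \(\boldsymbol a_e=\boldsymbol e_1\). Deleting the first row and the column \(e\) then represents \(M/e\). This is the usual projection of the remaining columns modulo \(\langle\boldsymbol a_e\rangle\), with \(r_{M/e}(X)=r_M(X\cup e)-1\).
\item Direct sums: the block-diagonal matrix \(\begin{pmatrix}A_1&0\\0&A_2\end{pmatrix}\) represents \(M_1\oplus M_2\). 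A set of columns has a nontrivial dependency exactly when one of its two blocks does.
\end{enumerate}
Taking \(\F=\operatorname{GF}(2)\) gives the binary case. Regularity means representability over every field, so applying (a)--(d) field by field gives the regular case.

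For graphic matroids, the graph operations match the matroid ones. We have \(M(G)\backslash e=M(G\backslash e)\), and \(M(G)/e=M(G/e)\), where \(G/e\) identifies the ends of \(e\) (a loop contraction is just deletion). The identity \(M(G_1)\oplus M(G_2)=M(G_1\sqcup G_2)\) holds because cycles lie in a single component. The only point needing a little care is contraction: the cycles of \(G/e\) are exactly the minimal nonempty edge sets \(C\setminus e\) with \(C\) a cycle of \(G\), which are precisely the circuits of \(M(G)/e\). I regard this as the main, though routine, verification.

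Cographic matroids then follow by duality, using \((M\backslash e)^*=M^*/e\), \((M/e)^*=M^*\backslash e\), and \((M_1\oplus M_2)^*=M_1^*\oplus M_2^*\). Thus a minor of \(M^*(G)\) is the dual of a minor of \(M(G)\), hence of the form \(M^*(G')\) by the graphic case. Likewise \(M^*(G_1)\oplus M^*(G_2)=M^*(G_1\sqcup G_2)\). These are the standard statements in \cite[Chapters~2--3 and Proposition~4.2.11]{oxley}, and the proof would simply cite them after the sketch above.
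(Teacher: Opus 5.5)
Your proposal is correct. The paper gives no proof of this fact; it simply cites Oxley for it. Your sketch reproduces the standard textbook arguments behind that citation:
\begin{itemize}
\item matrix deletion, contraction after pivoting \(\boldsymbol a_e\) to \(\boldsymbol e_1\), and block-diagonal direct sums, applied over \(\operatorname{GF}(2)\) or field by field for the binary and regular classes;
\item the identities \(M(G)/e=M(G/e)\) and \(M(G_1)\oplus M(G_2)=M(G_1\sqcup G_2)\) for the graphic class;
\item duality for the cographic class.
\end{itemize}
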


\begin{fact}\label{fact:binary-excluded}
A matroid is binary if and only if it has no \(U_{2,4}\)-minor.
\end{fact}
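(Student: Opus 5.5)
The forward direction is immediate. Binary matroids are closed under minors (Fact~\ref{fact:minor-direct-sums}), and \(U_{2,4}\) is not binary. Indeed, a representation of \(U_{2,4}\) over \(\operatorname{GF}(2)\) would need four pairwise non-parallel nonzero vectors in a \(2\)-dimensional space over \(\operatorname{GF}(2)\), which has only three nonzero vectors. So a binary matroid has no \(U_{2,4}\)-minor. The content of the fact is the converse, which is Tutte's excluded-minor theorem. The plan below follows the standard argument in Oxley via fundamental-circuit matrices.

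For the converse, argue by a minimal counterexample. Let \(M\) be non-binary with no \(U_{2,4}\)-minor, and choose \(M\) with \(|E(M)|\) minimal.
\begin{enumerate}[label=\textup{(\arabic*)},leftmargin=2.5em]
\item By minimality, every proper minor of \(M\) is binary. Hence \(M\) is connected and simple, and so is \(M^*\).
\item Fix a basis \(B\). Let \(A\) be the \(B\times(E-B)\) matrix over \(\operatorname{GF}(2)\) whose \((b,e)\) entry is \(1\) exactly when \(b\) lies in the fundamental circuit \(C(e,B)\). Put \(M'=M[I_B\mid A]\).
\item For each \(e\notin B\), the deletions \(M\backslash e\) and \(M'\backslash e\) are both binary. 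They share \(B\) as a basis and have the same fundamental-circuit incidence matrix. A binary matroid is determined by this matrix, because representations over \(\operatorname{GF}(2)\) are unique up to projective equivalence. Hence \(M\backslash e=M'\backslash e\).
\item Dually, for each \(b\in B\), apply the same reasoning to \(M/b\), using fundamental cocircuits. This gives \(M/b=M'/b\).
\item Since \(M\ne M'\), some set has a different rank in \(M\) and in \(M'\). Choose a basis \(X\) of exactly one of \(M,M'\) with \(|X\,\triangle\,B|\) minimal.
\item By the two agreements above, \(X\) cannot avoid any \(e\notin B\) and cannot contain any \(b\in B\) on which contraction is possible. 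From this I expect to force \(E=B\cup X\) with \(B\cap X=\emptyset\). I also expect, via basis exchange and minimality of \(|X\triangle B|\), that \(|B|=|X|=2\), so \(r(M)=2\) and \(|E|=4\).
\end{enumerate}

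A simple rank-\(2\) matroid on four elements is \(U_{2,4}\). This contradicts the hypothesis and completes the converse.

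The main obstacle is step~(6): showing that a minimal disagreeing set forces rank \(2\). What I would try first is to work in the bipartite fundamental-circuit graph on \(B\cup X\). I would use the symmetric exchange property in both \(M\) and \(M'\) to show that any element outside a \(4\)-element configuration \(\{b_1,b_2,x_1,x_2\}\) could be contracted or deleted while preserving the disagreement, contradicting minimality. The resulting \(2\times2\) submatrix of \(A\) is all ones over \(\operatorname{GF}(2)\), so \(\{x_1,x_2\}\) is dependent in \(M'\) but independent in \(M\). This is exactly the \(U_{2,4}\) pattern.

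A fallback route is the circuit--cocircuit parity criterion: \(M\) is binary iff every circuit meets every cocircuit in an even number of elements. Given a circuit \(C\) and cocircuit \(C^*\) with \(|C\cap C^*|\) odd, I would reduce to a minor in which \(|C\cap C^*|=3\). In such a minor, contracting and deleting appropriately yields \(U_{2,4}\).
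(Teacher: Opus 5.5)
The paper gives no proof of this fact; it is Tutte's excluded-minor theorem, quoted from Oxley. Your forward direction is correct, and steps~(1)--(5) are a sound Oxley-style set-up. However, step~(6) is the only place where the hypothesis ``no \(U_{2,4}\)-minor'' is used, and you leave it as an expectation, so the converse is not proved.

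The mechanism you sketch also does not close. The first half of (6) needs no minimality. If \(e\notin X\cup B\), then \(X\) is a basis of exactly one of \(M\backslash e,M'\backslash e\). If \(b\in X\cap B\), then \(X\setminus\{b\}\) is a basis of exactly one of \(M/b,M'/b\). Both contradict (3)--(4). Hence \emph{every} set that is a basis of exactly one of \(M,M'\) equals \(E\setminus B\), and the minimality of \(|X\triangle B|\) carries no information. By (3)--(4), deleting an element of \(X\) or contracting an element of \(B\) always destroys the disagreement. Contracting an element of \(X\) or deleting an element of \(B\) can preserve it, but then you merely have two different binary matroids, which contradicts nothing. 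So ``shrink while preserving the disagreement, contradicting minimality'' cannot force \(|B|=2\).

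The missing idea is to exhibit \(U_{2,4}\) directly in a rank-two contraction. Every matroid of rank at most one is binary, so \(r\ge2\). Let \(N\) be whichever of \(M,M'\) has \(X=E\setminus B\) as a basis, and \(N'\) the other. By (3), \(X\setminus\{x\}\) is independent in both for each \(x\in X\), and \(X\) is the only disagreeing basis. Take distinct \(x_1,x_2\in X\) and put \(Z=X\setminus\{x_1,x_2\}\). Then \(N/Z\) and \(N'/Z\) have rank two and the same bases except \(\{x_1,x_2\}\). This pair is a basis of \(N/Z\), while \(x_1\parallel x_2\) in \(N'/Z\).

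Since \(r_{N/Z}(B)=2\), choose \(q_1,q_2\in B\) forming a basis of \(N/Z\). Suppose some \(q_i\) were parallel to \(x_1\) in \(N/Z\). Since that pair is not \(\{x_1,x_2\}\), \(q_i\) would be parallel to \(x_1\), hence to \(x_2\), in \(N'/Z\). Transferring back, \(q_i\) would be parallel to \(x_2\) in \(N/Z\), forcing \(x_1\parallel x_2\) there, a contradiction. The same argument applies to \(x_2\). Thus all six pairs from \(\{x_1,x_2,q_1,q_2\}\) are bases, and \((N/Z)|\{x_1,x_2,q_1,q_2\}\cong U_{2,4}\). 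Since \(M'\) is binary, \(N=M\), contradicting the hypothesis on \(M\). This argument also makes step~(1) and the choice of minimal \(|X\triangle B|\) unnecessary.

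Your fallback via circuit--cocircuit parity does not avoid the difficulty. The implication from even intersections to binarity is itself a form of Tutte's theorem, and the reduction to \(|C\cap C^*|=3\) is unjustified.
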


\begin{fact}\label{fact:regular-excluded}
A matroid is regular if and only if it has no minor isomorphic to
\(U_{2,4}\), \(F_7\), or \(F_7^*\).
\end{fact}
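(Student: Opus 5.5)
This is Tutte's excluded-minor characterization of regular matroids, and I would simply cite it \cite[Theorem~6.6.3]{oxley}. If a self-contained argument were wanted, I would proceed as follows.

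\emph{Necessity.} Regularity is closed under minors by Fact~\ref{fact:minor-direct-sums}, so it is enough to check that none of the three matroids is regular.
\begin{itemize}
\item \(U_{2,4}\) is not binary, since \(\operatorname{GF}(2)^2\) has only three nonzero vectors, so it cannot be regular.
\item \(F_7\) is the projective plane \(\operatorname{PG}(2,2)\). Take coordinates \(I_3\) together with the four columns \((1,1,0)^T\), \((1,0,1)^T\), \((0,1,1)^T\), \((1,1,1)^T\). The dependencies of \(F_7\) force these normalizations over any field. The line \(\{(1,1,0),(1,0,1),(0,1,1)\}\) must then be dependent, but its determinant is \(2\), so \(\operatorname{char}(\F)=2\). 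Hence \(F_7\) is not representable over \(\QQ\).
\item Representability over a field is preserved under duality, so \(F_7^*\) is not representable over \(\QQ\) either.
\end{itemize}

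\emph{Sufficiency.} Let \(M\) have none of the three minors.
\begin{enumerate}
\item By Fact~\ref{fact:binary-excluded}, \(M\) is binary, with a standard representation \([I_r\mid D]\) over \(\operatorname{GF}(2)\).
\item By Camion's signing lemma, \(D\) has a real signing \(D'\), unique up to scaling rows and columns by \(\pm1\), in which every cycle of the bipartite support graph has signed sum \(\equiv0\pmod4\).
\item It suffices to show that \([I_r\mid D']\) is totally unimodular. A TU real representation makes \(M\) regular by the remark preceding Fact~\ref{fact:graph-minors}.
\item Suppose not. Choose \(M\) minimal, so every proper minor is regular. Then some square submatrix of \(D'\) has determinant outside \(\{0,\pm1\}\).
\item Pivot (binary pivots lifted to real pivots, which preserve the signing and correspond to changing bases) and pass to minors. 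This reduces to a minimal violating square submatrix \(S\). Minimality gives \(|\det S|=2\), and every proper submatrix of \(S\) is TU.
\end{enumerate}

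\emph{Main obstacle.} The hardest step is the case analysis of such an \(S\), following Gerards' short proof. Using connectivity, pivoting, and the mod-\(4\) cycle condition, one shows that \(S\), after pivots, is the \(3\times3\) all-ones-minus-identity pattern (up to signs), or its transpose/complement. That is, \(M\) has the binary matrix \([I_3\mid J-I]\) extended by an all-ones column, or the dual configuration, as a minor. These are precisely \(F_7\) and \(F_7^*\), contradicting the hypothesis.
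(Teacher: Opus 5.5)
Citing Tutte's theorem is what the paper does. The statement is one of the standard facts quoted from Oxley and is given no proof, so your primary plan agrees with the paper. Your necessity argument is also correct: there are only three nonzero vectors in $\operatorname{GF}(2)^2$, the determinant $\pm2$ of the line $\{(1,1,0),(1,0,1),(0,1,1)\}$ rules out $F_7$ outside characteristic two, and duality handles $F_7^*$. The reference should point to the excluded-minor theorem itself, which I believe is Theorem~6.6.4 of Oxley. Theorem~6.6.3 is what the paper uses for the equivalence of regularity with TU representability.

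The optional self-contained sufficiency sketch does not work as written.

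\emph{Step~2 misstates Camion's lemma.} In general no signing makes every cycle of the support graph sum to $0$ modulo $4$. Take $D=\left[\begin{smallmatrix}1&1&0\\1&1&1\\0&1&1\end{smallmatrix}\right]$ with rows $r_i$, columns $c_j$ and entries $d_{ij}$; then $[I_3\mid D]$ represents the regular matroid $M(K_4)$. The $6$-cycle $r_1c_1r_2c_3r_3c_2$ has the single chord $r_2c_2$. Its signed sum equals the total of the signed sums of the two $4$-cycles through that chord, minus $2d_{22}$. It is therefore $\equiv2\pmod 4$ whenever both $4$-cycles are $\equiv0$. The correct statement concerns chordless cycles, and the Camion signing only fixes those closed by non-tree edges in a suitable order.

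\emph{Step~5 needs more care than stated.} A real pivot on a signed matrix that is not TU can create entries $\pm2$. So lifting binary pivots while preserving the signing is itself something the argument must arrange.

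\emph{The endgame does not follow, and this is the main gap.} A minimal violating block with pattern $J-I$ cannot by itself yield $F_7$. The matrix $[I_3\mid J-I]$ represents $M(K_4)$, which is regular: negating any one entry of $J-I$ gives a TU matrix. Nonregularity of $F_7$ comes from the extra all-ones column (or, dually for $F_7^*$, row). Its $2\times2$ submatrices with the block force each column (or row) of the block to have constant sign in any TU signing. Hence the block has determinant $\pm2$ and no TU signing exists. Showing that a minimal counterexample must contain such a forcing column or row is the real content of Gerards' argument, and your sketch asserts it rather than derives it.

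As a citation your proposal is fine. As a self-contained proof, its sufficiency direction is incomplete.
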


\begin{fact}\label{fact:cographic-excluded}
A matroid is cographic if and only if it has no minor isomorphic to
\(U_{2,4}\), \(F_7\), \(F_7^*\), \(M(K_5)\), or \(M(K_{3,3})\).
\end{fact}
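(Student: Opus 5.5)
The plan is to deduce this from Tutte's excluded-minor characterization of graphic matroids by duality, rather than prove anything from scratch. Tutte's theorem \cite[Theorem~10.3.1]{oxley} says a matroid is graphic if and only if it has no minor isomorphic to \(U_{2,4}\), \(F_7\), \(F_7^*\), \(M^*(K_5)\), or \(M^*(K_{3,3})\). By definition, \(M\) is cographic exactly when \(M^*\) is graphic, since \(M\cong M^*(G)\) if and only if \(M^*\cong M(G)\).

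The key step is the standard duality of minors: \(N\) is a minor of \(M\) if and only if \(N^*\) is a minor of \(M^*\). This follows from \((M\backslash X)^*=M^*/X\) and \((M/X)^*=M^*\backslash X\) \cite[Chapter~3]{oxley}. Hence \(M^*\) has an excluded minor \(N\) from Tutte's list if and only if \(M\) has \(N^*\) as a minor. So \(M\) is cographic if and only if it has no minor isomorphic to the dual of any matroid in that list.

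It remains to compute the dual list. \(U_{2,4}^*=U_{2,4}\) since \(U_{r,n}^*=U_{n-r,n}\). The pair \(F_7,F_7^*\) is interchanged by duality. Finally \(M^*(K_5)^*=M(K_5)\) and \(M^*(K_{3,3})^*=M(K_{3,3})\). The dual list is therefore exactly \(U_{2,4}\), \(F_7\), \(F_7^*\), \(M(K_5)\), \(M(K_{3,3})\), as stated.

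There is no real obstacle, since Tutte's theorem is the deep input and is taken as known. The only point needing care is that isomorphism classes are preserved under taking duals and minors, so "isomorphic to a minor" transfers correctly through the duality.
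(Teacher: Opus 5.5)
Your argument is correct: Tutte's excluded-minor theorem for graphic matroids, the identities \((M\backslash X)^*=M^*/X\) and \((M/X)^*=M^*\backslash X\), and the duals of the excluded list (\(U_{2,4}\) self-dual, \(F_7\leftrightarrow F_7^*\), \(M^*(K_5)\mapsto M(K_5)\), \(M^*(K_{3,3})\mapsto M(K_{3,3})\)) give exactly the stated list. The paper gives no proof of its own and quotes the fact from Oxley; your duality derivation is the standard justification behind that citation.
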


\begin{fact}\label{fact:whitney-planarity}
A graph \(G\) is planar if and only if \(M^*(G)\) is graphic;
equivalently, \(M(G)\) is cographic if and only if \(G\) is planar.
\end{fact}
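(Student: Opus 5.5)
The plan is to reduce the statement to Wagner's theorem by way of the excluded-minor description of cographic matroids in Fact~\ref{fact:cographic-excluded}. The two formulations are equivalent because \((M^*(G))^*=M(G)\): the matroid \(M^*(G)\) is graphic exactly when \(M(G)\) is isomorphic to the dual of a cycle matroid, that is, when \(M(G)\) is cographic. So it suffices to prove that \(M(G)\) is cographic if and only if \(G\) is planar.

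\emph{Planar implies cographic.} Suppose \(G\) is planar and fix a plane embedding. Form the geometric dual \(G^*\), whose edges correspond bijectively to \(E(G)\). The standard cycle--bond duality for plane graphs says that a set of edges is a cycle of \(G\) exactly when the corresponding set is a bond of \(G^*\). Hence \(M(G)=M^*(G^*)\) on the common edge set, and \(M(G)\) is cographic.

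\emph{Cographic implies planar.} We argue by contraposition: assume \(G\) is nonplanar. By Wagner's theorem, \(G\) has \(K_5\) or \(K_{3,3}\) as a graph minor. Fact~\ref{fact:graph-minors} then gives \(M(G)\) a minor isomorphic to \(M(K_5)\) or \(M(K_{3,3})\). By Fact~\ref{fact:cographic-excluded}, \(M(G)\) is not cographic. This direction does not need the reverse step from matroid minors back to graph minors.

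The only step with real content is the dual-graph identity \(M(G)=M^*(G^*)\), together with Wagner's theorem; both are quoted from standard sources (Oxley, Chapters~2 and~5) rather than reproved. The one point to check is that the dual construction is valid for disconnected \(G\) and for graphs with loops or bridges. For disconnected \(G\), I would handle each component separately and then use closure of cographic matroids under direct sums, Fact~\ref{fact:minor-direct-sums}. Loops and bridges need no extra argument: in a plane graph they are exchanged with bridges and loops of \(G^*\), which is consistent with the duality.
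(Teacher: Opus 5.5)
Your argument is valid as a derivation from the facts the paper takes as given, and its skeleton is the standard proof of Whitney's theorem: plane duality for one direction, and Kuratowski--Wagner plus minor-closure for the other. The paper does not reprove this theorem but cites it from Oxley. Your treatment of disconnected graphs, loops and bridges is fine.

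The difference lies in the key lemma for the hard direction. The standard proof shows directly that \(M^*(K_{3,3})\) and \(M^*(K_5)\) are not graphic, by a degree count. Suppose \(M^*(K_{3,3})\cong M(H)\) with \(H\) connected. Then \(H\) is loopless, since \(K_{3,3}\) has no bridges, and has \(5\) vertices. Each vertex star is a nonempty edge cut, so it contains a bond of \(H\), i.e.\ a cycle of \(K_{3,3}\). Hence every degree is at least \(4\), giving \(18=2|E(H)|\ge20\). For \(K_5\), the same count gives \(7\) vertices of degree at least \(3\), so \(20\ge21\).

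You instead take the non-cographicity of \(M(K_5)\) and \(M(K_{3,3})\) from Fact~\ref{fact:cographic-excluded}. Inside the paper, where both facts are black boxes, this is legitimate and shorter. However, your closing remark misplaces the content. The substance of the hard direction is exactly this non-cographicity, not Wagner's theorem or the identity \(M(G)=M^*(G^*)\). Moreover, the half of Tutte's excluded-minor theorem you invoke is usually established via the present statement or via the count above. So, unwound, your route is circular as a self-contained proof and relies on a much deeper theorem than needed. Substituting the short degree count removes both issues.
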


\begin{fact}\label{fact:series-parallel-minors}
For a \(2\)-connected simple graph \(G\),
\[
G\text{ is a series--parallel network}
 \quad\Longleftrightarrow\quad
 M(G)\text{ has no }M(K_4)\text{-minor}.
\]
\end{fact}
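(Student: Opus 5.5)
The statement is one of the standard facts quoted from Oxley, so I would prove it by reducing both directions to the classical theorem of Duffin (also Dirac): a simple \(2\)-connected graph is a series--parallel network exactly when it has no \(K_4\) graph minor, equivalently no subdivision of \(K_4\). The work is then to pass between graph minors and matroid minors, which for \(K_4\) is safe because \(K_4\) is \(3\)-connected.

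For the forward direction, I would induct on the number of edge subdivisions and parallel-edge additions used to build \(G\). The two-edge cycle gives \(U_{1,2}\), which has no \(M(K_4)\)-minor. A parallel-edge addition induces a parallel extension of \(M(G)\). Subdividing a nonbridge edge induces a series extension, and in a \(2\)-connected network every edge is a nonbridge. Now suppose \(M'\) is a series or parallel extension of \(M\) by \(p\), and \(M'\) has an \(M(K_4)\)-minor \(M'\backslash X/Y\). If \(p\in X\cup Y\), I move \(p\) to whichever operation yields \(M\), using \(M'\backslash p\cong M'/p'\) for the partner \(p'\) in the series or parallel class. If \(p\) is retained, its partner is also retained, because \(M(K_4)\) has no series or parallel pairs. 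Otherwise \(p\) would become parallel to, or in series with, the partner in the minor, or become a loop or coloop. Then \(p\) and its partner are interchangeable, and the minor is isomorphic to a minor of \(M\). This contradicts the induction hypothesis.

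For the reverse direction, assume \(G\) is \(2\)-connected and simple and \(M(G)\) has no \(M(K_4)\)-minor. By Fact~\ref{fact:graph-minors}, \(G\) has no \(K_4\) graph minor: otherwise \(M(K_4)\) would be a minor of \(M(G)\). I then apply Duffin's theorem. If I had to reprove it, I would use an ear decomposition. A \(2\)-connected graph without a \(K_4\)-subdivision admits an open ear decomposition whose ears each attach to two vertices of a single earlier cycle segment, and such ears are built exactly by subdivision and parallel addition. A cleaner route is induction on \(|E(G)|\): a \(2\)-connected \(K_4\)-minor-free simple graph other than a cycle has a degree-\(2\) vertex or a \(2\)-separation. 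Suppressing the degree-\(2\) vertex reverses a subdivision. Any parallel edges this creates are removed by reversing parallel additions. Both steps preserve \(2\)-connectivity after simplification, since a multigraph is allowed in the network definition.

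The main obstacle is this reverse direction, namely the graph-theoretic existence of a degree-\(2\) vertex in a \(K_4\)-minor-free \(2\)-connected graph. Since the fact is quoted as standard, in the paper I would simply cite Duffin together with \cite[Section~5.4, Exercise~8]{oxley} and Fact~\ref{fact:graph-minors}, and spell out only the matroid/graph minor translation above.
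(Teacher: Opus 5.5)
The paper gives no proof of this statement. It quotes it as a standard fact (Duffin's theorem together with Oxley's treatment of series--parallel networks), so your plan to cite Duffin and translate between graph and matroid minors is in line with the paper. Your reverse direction, the contrapositive of Fact~\ref{fact:graph-minors} followed by Duffin's theorem, is correct. The forward-direction sketch, however, contains two incorrect steps that you would need to repair before writing it out.

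First, the identity \(M'\backslash p\cong M'/p'\) is false. In the parallel case \(p\) lies in the circuit \(\{p,p'\}\), so it is not a coloop and \(r(M'\backslash p)=r(M')\). But \(p'\) is not a loop, so \(r(M'/p')=r(M')-1\); the series case is dual. What you need is the following. Whenever \(\{p,p'\}\) is a circuit or a cocircuit of \(M'\), the transposition of \(p\) and \(p'\) is an automorphism of \(M'\), and
\[
M'\backslash p/p'=M'/p\backslash p'.
\]
Both sides equal \(M'/\{p,p'\}\) in the circuit case and \(M'\backslash\{p,p'\}\) in the cocircuit case.

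Second, the claim ``if \(p\) is retained, its partner is also retained'' is backwards. If both survive in \(N=M'\backslash X/Y\), then \(\{p,p'\}\) contains a circuit (resp.\ cocircuit) of \(N\). That gives a loop or parallel pair (resp.\ coloop or series pair) in \(N\cong M(K_4)\), which is impossible. So at most one of \(p,p'\) survives, and by the automorphism you may assume \(p\) is removed. If \(p\) is removed by the operation that yields \(M\), you are done. Otherwise \(p'\) becomes a loop once \(p\) is contracted (parallel case), or a coloop once \(p\) is deleted (series case). Then \(p'\) must also be removed, and the displayed identity converts this into a removal of \(p\) of the right kind, exhibiting \(N\) as a minor of \(M\).

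Alternatively, you can avoid this bookkeeping altogether. A minor of \(M(G)\) is \(M(H)\) for a graph minor \(H\) of \(G\). As in the proof of Lemma~\ref{lem:regular-sp-outerplanar}, any graph representing \(M(K_4)\) is \(K_4\) up to isolated vertices. So an \(M(K_4)\)-minor of \(M(G)\) yields a \(K_4\) graph minor of \(G\), which Duffin's theorem excludes for series--parallel networks.
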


\begin{fact}\label{fact:outerplanar-minors}
For every finite simple graph \(G\),
\[
G\text{ is outerplanar}
 \quad\Longleftrightarrow\quad
 G\text{ has neither }K_4\text{ nor }K_{2,3}\text{ as a graph minor}.
\]
\end{fact}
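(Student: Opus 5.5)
We would prove this via the apex characterization of outerplanarity combined with Wagner's theorem, i.e.\ Kuratowski's theorem in minor form: a graph is planar if and only if it has neither \(K_5\) nor \(K_{3,3}\) as a graph minor. For a graph \(G\), let \(G^+\) be obtained by adding a new vertex \(v\) adjacent to every vertex of \(G\). The first step is the lemma that \(G\) is outerplanar if and only if \(G^+\) is planar. Given an outerplane embedding of \(G\), place \(v\) in the outer face and join it to every vertex; this is possible because every vertex lies on the outer-face boundary. Conversely, take a plane embedding of \(G^+\) and delete \(v\) with its incident edges. The face of the resulting embedding of \(G\) that contained \(v\) has every vertex on its boundary. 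Re-embedding so that this face is the outer face gives an outerplane embedding. Loops and parallel edges are irrelevant to planarity, so we may freely simplify.

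For the forward direction, the second step is minor-closedness. If \(H\) is a graph minor of \(G\), then \(H^+\) is, up to parallel edges and loops, a minor of \(G^+\): perform the same deletions and contractions inside \(G\) and keep the apex \(v\). Planarity is minor-closed, so outerplanarity is inherited by minors. It then remains to check that \(K_4\) and \(K_{2,3}\) are not outerplanar. This holds because \(K_4^+\cong K_5\) and \(K_{2,3}^+\) contains \(K_{3,3}\) (the apex joins the part of size two), and neither of these is planar.

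For the converse, suppose \(G\) is not outerplanar. Then \(G^+\) is nonplanar, so by Wagner's theorem it has a \(K_5\)- or \(K_{3,3}\)-minor. We realize this minor by disjoint connected branch sets in \(G^+\), one for each vertex of the target, with an edge of \(G^+\) between the branch sets of each adjacent pair of target vertices.
\begin{itemize}
\item If no branch set contains \(v\), the minor already lives in \(G\). Then \(G\) has a \(K_5\)- or \(K_{3,3}\)-minor, and hence a \(K_4\)- or \(K_{2,3}\)-minor.
\item If \(v\) lies in the branch set of some target vertex \(x\), delete that entire branch set. The remaining branch sets avoid \(v\), so they lie in \(G\), and all their required adjacencies are realized by edges of \(G\). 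They therefore give a \(K_5-x\cong K_4\) or a \(K_{3,3}-x\cong K_{2,3}\) minor of \(G\).
\end{itemize}

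The only delicate step is this branch-set bookkeeping. In particular, one must confirm that deleting the branch set through \(v\) destroys no adjacency needed among the remaining branch sets. This holds because any such adjacency is witnessed by an edge with both ends outside that branch set, hence an edge of \(G\). The rest is routine once Wagner's theorem is assumed.
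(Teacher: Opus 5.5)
Your proof is correct, but it does not follow the paper, because the paper gives no proof of this statement. It records it as a standard fact, citing Oxley and, for the equivalent forbidden-subdivision form (no subgraph homeomorphic to \(K_4\) or \(K_{2,3}\)), Chartrand and Harary.

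You derive the statement from Wagner's theorem through the apex graph \(G^+\). The lemma that \(G\) is outerplanar if and only if \(G^+\) is planar does double duty. It gives minor-closedness of outerplanarity, which the paper invokes in the proof of Lemma~\ref{lem:regular-sp-outerplanar}. It also shows that \(K_4\) and \(K_{2,3}\) are not outerplanar, via \(K_4^+\cong K_5\) and \(K_{3,3}\subseteq K_{2,3}^+\). Your branch-set step for the converse is sound: every adjacency needed among the surviving branch sets is witnessed by an edge avoiding the deleted branch set containing the apex, hence by an edge of \(G\).

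Your route produces the minor form directly, which is the form the paper applies (in Lemma~\ref{lem:regular-sp-outerplanar} and Theorem~\ref{thm:local-classification}). Its bookkeeping is also simpler than the subdivision version, where the apex could lie in the interior of a subdivided edge. The cited route gives the formally stronger topological statement. The two are equivalent here because \(K_4\) and \(K_{2,3}\) have maximum degree three, so containing either as a minor is the same as containing a subdivision of it.

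Your argument places all its depth in Wagner's theorem. The only ingredients you leave implicit are two standard topological facts. First, every vertex on the boundary of a face can be joined to a new vertex inside that face by internally disjoint arcs; this is needed even when \(G\) is disconnected or has cut vertices. Second, any face can be made the outer face.
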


\subsection{Represented tensor products}\label{subsec:tensor-preliminaries}
Las Vergnas introduced abstract tensor products of matroids in \cite{las-vergnas-products}; we use the equivalent formulation recorded
by Anderson \cite[Definition~2.1 and Lemma~2.2]{anderson-matroid-products}.  Let \(M\) and \(N\) be
matroids on ground sets \(E(M)\) and \(E(N)\).  A matroid \(P\) on
\(E(M)\times E(N)\) is a \emph{quasi-product} of \(M\) and \(N\) if, for each
nonloop \(e\in E(M)\), the restriction of \(P\) to
\(\{e\}\times E(N)\) is isomorphic to \(N\) under \(x\mapsto(e,x)\), and, for
each nonloop \(f\in E(N)\), the restriction of \(P\) to
\(E(M)\times\{f\}\) is isomorphic to \(M\) under \(x\mapsto(x,f)\).
In addition, fibres over loops of either factor are required to have rank zero.  Such a
quasi-product \(P\) is a \emph{tensor product} of \(M\) and \(N\) precisely
when
\[
r_P(X\times Y)=r_M(X)r_N(Y),\qquad X\subseteq E(M),\;Y\subseteq E(N).
\]
Equivalently, a quasi-product is a tensor product when \(r(P)=r(M)r(N)\).
This definition involves no field or choice of representation, and
such a product need not exist for arbitrary matroids
\cite[Proposition~2.1]{las-vergnas-products}.

Let \(A=[\boldsymbol{a}_e:e\in E]\) and \(B=[\boldsymbol{b}_f:f\in F]\)
be matrices over \(\F\).  Their Kronecker product \(A\otimes B\)
has columns labelled by \(E\times F\), with column
\(\boldsymbol{a}_e\otimes\boldsymbol{b}_f\) at \((e,f)\).
We call \(P=M[A\otimes B]\) the \emph{represented tensor product}
associated with \(A\) and \(B\).  For \(X\subseteq E\) and
\(Y\subseteq F\), the submatrix on \(X\times Y\) is
\(A_X\otimes B_Y\), so
\[
r_P(X\times Y)=\rank(A_X)\rank(B_Y)=r_{M[A]}(X)r_{M[B]}(Y).
\]
Consequently, \(P\) is an abstract tensor product of \(M[A]\) and
\(M[B]\).  In particular, the Cartesian product of bases of the two
factors is a basis of \(P\), as in Las Vergnas's original formulation
\cite[p.~50]{las-vergnas-products}.
For general representable factors, the resulting matroid may depend
on the chosen representations, even over a fixed field
\cite[Section~2.1]{anderson-matroid-products}.

Two matrices \(A\) and \(A'\) over \(\F\), with columns labelled
by the same set \(E\), are \emph{projectively equivalent} if they are
related by elementary row operations, insertion or deletion of zero
rows, and nonzero column scalings.  When both have \(r\) rows and
the same column order, this is equivalent to \(A'=XAY\), where
\(X\) is a nonsingular \(r\times r\) matrix and \(Y\) is a
nonsingular diagonal matrix \cite[p.~176 and Proposition~6.3.12]{oxley}.
Columns may be reordered together with their labels.  Whenever a
ground-set bijection is specified, the asserted matroid isomorphism
is induced by that bijection.

Let \(s\ge2\), and let \(M_1,\ldots,M_s\) be finite binary matroids
representable over a common field \(\F\).
For labelled \(\F\)-representations \(A_1,\ldots,A_s\), define
\begin{equation}\label{eq:regular-tensor-definition}
 T_{\F}(M_1,\ldots,M_s)
 :=M[A_1\otimes\cdots\otimes A_s].
\end{equation}
The ground set is \(E(M_1)\times\cdots\times E(M_s)\).
The definition includes loops and parallel elements.  For two factors
we write \(T_{\F}(M,N)\).

\begin{proposition}
\label{thm:basic-represented-product}
The labelled matroid in \eqref{eq:regular-tensor-definition} is
independent of the chosen factor representations.  Permuting the
factors gives the corresponding natural matroid isomorphism.
The canonical associativity isomorphisms of vector-space tensor
products preserve the resulting labelled matroid.
\end{proposition}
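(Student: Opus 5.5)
The plan is to reduce everything to the behaviour of Kronecker products under the operations generating projective equivalence, and then to invoke unique representability of binary matroids \cite{brylawski-lucas}, \cite[Proposition~6.6.5]{oxley}. Fix \(i\in[s]\) and let \(A_i'\) be another \(\F\)-representation of \(M_i\) with the same column labels. Since \(M_i\) is binary, \(A_i\) and \(A_i'\) are projectively equivalent. After inserting or deleting zero rows, which adds or removes zero rows of the Kronecker product and so leaves its vector matroid unchanged, we may assume \(A_i'=X A_i Y\) with \(X\) nonsingular and \(Y\) nonsingular diagonal. The mixed-product rule then gives
\[
A_1\otimes\cdots\otimes A_i'\otimes\cdots\otimes A_s
=(I\otimes\cdots\otimes X\otimes\cdots\otimes I)\,
(A_1\otimes\cdots\otimes A_s)\,
(I\otimes\cdots\otimes Y\otimes\cdots\otimes I).
\]
The left factor is nonsingular and the right factor is diagonal with nonzero entries. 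Row operations by a nonsingular matrix and nonzero column scalings preserve the labelled vector matroid, so the two products have the same labelled matroid. Replacing the factors one at a time gives independence of all the choices.

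For the permutation statement, let \(\sigma\) be a permutation of \([s]\). There are commuting permutation matrices \(P\) (on rows) and \(Q\) (on columns) with
\[
A_{\sigma(1)}\otimes\cdots\otimes A_{\sigma(s)}=P\,(A_1\otimes\cdots\otimes A_s)\,Q.
\]
Concretely, \(P\) and \(Q\) realize the canonical isomorphism of tensor products permuting the factors, which sends \(\boldsymbol a_{e_1}\otimes\cdots\otimes\boldsymbol a_{e_s}\) to \(\boldsymbol a_{e_{\sigma(1)}}\otimes\cdots\otimes\boldsymbol a_{e_{\sigma(s)}}\). Row permutation preserves the matroid, and \(Q\) relabels columns by \((e_1,\dots,e_s)\mapsto(e_{\sigma(1)},\dots,e_{\sigma(s)})\). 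Hence this natural coordinate permutation is a matroid isomorphism.

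For associativity, the Kronecker product is associative on the nose, \((A\otimes B)\otimes C=A\otimes(B\otimes C)\), once rows and columns are indexed lexicographically. The only remaining point is the identification of ground sets \((E_1\times E_2)\times E_3\cong E_1\times(E_2\times E_3)\), and under this identification the columns are literally equal. There is one subtlety: the inner product \(T_{\F}(M_1,M_2)\) must be a legitimate factor, which needs a chosen representation. We use \(A_1\otimes A_2\) itself, and the first step shows the result does not depend on this choice.

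The main obstacle is bookkeeping rather than mathematics. We must make sure that the zero-row insertions and deletions and the label reorderings allowed in projective equivalence are compatible with the Kronecker structure, and that unique representability is applied to each factor with its fixed labelling. Loops are harmless here: they give zero columns in both representations.
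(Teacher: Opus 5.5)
Your argument for the three assertions is correct and is essentially the paper's proof. Unique representability of each binary factor gives \(A_i'=XA_iY\). The mixed-product rule turns this into invertible row operations and nonzero column scalings of the Kronecker product. The permutation and associativity statements are the canonical coordinate identifications. The differences are cosmetic: the paper first row-reduces every factor to full row rank, treats a rank-zero factor separately, and replaces all factors at once via \(\bigotimes_i X_i\) and \(\bigotimes_i D_i\). You instead pad with zero rows and replace one factor at a time.

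You should delete one sentence in your associativity paragraph. There you say that \(T_{\F}(M_1,M_2)\) ``must be a legitimate factor'' and that the first step shows the result does not depend on the chosen representation of it. The first step rests on unique representability of \emph{binary} matroids. When \(\operatorname{char}(\F)\ne2\), the intermediate product need not be binary: \(T_{\QQ}(U_{2,3},M(K_4))\) has a \(U_{2,4}\)-minor (Remark~\ref{ex:regular-characteristic}). Such a matroid is not an admissible factor in \eqref{eq:regular-tensor-definition}. Nothing you have proved rules out a non-projectively-equivalent \(\F\)-representation of it giving a different product with \(A_3\).

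This is why the paper defines higher products directly and asserts associativity only at the level of the representing vector spaces. Your on-the-nose identity \((A\otimes B)\otimes C=A\otimes(B\otimes C)\) already proves that. The remark is therefore unnecessary, and it is unjustified unless \(T_{\F}(M_1,M_2)\) is binary (for instance in characteristic two).
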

\begin{proof}
Row reduction followed by deletion of zero rows replaces each factor
matrix by a full row rank matrix without changing the represented
product.  Indeed, invertible row operations in one coordinate induce
invertible row operations in the tensor product, and a zero row
contributes only zero tensor rows.  If some factor has rank zero,
every tensor column is zero, so the assertion is immediate.

Otherwise, let \(R_i,S_i\) be two full row rank representations of
\(M_i\) with the same column labels.  The unique-representation theorem
for binary matroids
\cite{brylawski-lucas} (see also \cite[Proposition~6.6.5]{oxley}) gives
\(R_i=X_iS_iD_i\), where \(X_i\) is nonsingular and \(D_i\) is
nonsingular diagonal.  Hence
\[
 \bigotimes_{i=1}^s R_i=
 \left(\bigotimes_{i=1}^s X_i\right)
 \left(\bigotimes_{i=1}^s S_i\right)
 \left(\bigotimes_{i=1}^s D_i\right).
\]
The left and right multipliers perform invertible row operations and
nonzero column scalings, respectively, so they preserve the linear
independence of every set of columns.  The canonical permutation and associativity maps for
vector-space tensor products prove the remaining assertions.
\end{proof}

\begin{lemma}\label{lem:prime-field-reduction}
Let \(\F_0\) be the prime subfield of \(\F\), identified with
\(\QQ\) if \(\operatorname{char}(\F)=0\), and with
\(\operatorname{GF}(p)\) if \(\operatorname{char}(\F)=p>0\).
Let \(s\ge2\), and let \(M_1,\ldots,M_s\) be finite binary matroids
representable over \(\F\).  They are also representable over \(\F_0\),
and, under the common product labelling,
\begin{equation}\label{eq:prime-field-invariance}
 T_{\F}(M_1,\ldots,M_s)=T_{\F_0}(M_1,\ldots,M_s).
\end{equation}
Consequently, products over fields of the same characteristic agree,
and every product in characteristic two is binary.
\end{lemma}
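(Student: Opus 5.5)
The plan is to show that each factor has a representation over the prime field \(\F_0\), and that this same representation, viewed over \(\F\), still represents the factor. The result then follows from Proposition~\ref{thm:basic-represented-product}, because the Kronecker product of matrices over \(\F_0\) has entries in \(\F_0\). The rank of a matrix with entries in \(\F_0\) is the same whether computed over \(\F_0\) or over \(\F\): Gaussian elimination, or equivalently the vanishing of minors, never leaves \(\F_0\). So the vector matroid of a matrix over \(\F_0\) is unchanged when the matrix is regarded over \(\F\). This observation will be used twice: once for the factors and once for the tensor product.

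Step one is the existence of the \(\F_0\)-representations, and I split by characteristic. If \(\operatorname{char}(\F)=2\), then \(\F_0=\operatorname{GF}(2)\), and \(M_i\) is binary by hypothesis. Any \(\operatorname{GF}(2)\)-representation \(A_i\) represents \(M_i\), and by the observation above \(A_i^{\F}\) also represents \(M_i\) over \(\F\). If \(\operatorname{char}(\F)\neq2\), then \(M_i\) is binary and representable over \(\F\), so it is regular by Fact~\ref{fact:matroid-classes}. Hence it has a totally unimodular real representation \(A_i\), which is an integer matrix. I then claim that the canonical image \(A_i^{K}\) represents \(M_i\) over any field \(K\). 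The reason is that every square subdeterminant lies in \(\{0,\pm1\}\), and its image in \(K\) is zero exactly when it is zero over \(\mathbb R\). Independence of a column set is the nonvanishing of some maximal minor, so the matroid is the same over \(\mathbb R\) and over \(K\) (this is the standard content of \cite[Theorem~6.6.3]{oxley}). Taking \(K=\F_0\) and \(K=\F\) gives representations that agree, the first being contained in the second.

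Step two is the equality of the products. With \(A_i\) over \(\F_0\) as above, \(T_{\F_0}(M_1,\ldots,M_s)=M[A_1\otimes\cdots\otimes A_s]\) computed over \(\F_0\). The matrices \(A_i^{\F}\) are legitimate \(\F\)-representations of the \(M_i\), so by Proposition~\ref{thm:basic-represented-product}, \(T_{\F}(M_1,\ldots,M_s)=M[A_1^{\F}\otimes\cdots\otimes A_s^{\F}]\). This last matrix is the same \(\F_0\)-matrix regarded over \(\F\). By the rank-invariance observation, the two labelled matroids coincide, which proves \eqref{eq:prime-field-invariance}.

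For the consequences: two fields of the same characteristic have isomorphic prime fields, so both products equal the product over that common prime field. In characteristic two the product is represented over \(\operatorname{GF}(2)\), hence it is binary. The only delicate point is step one in characteristic different from two. There we need a representation that is simultaneously valid over \(\F_0\) and \(\F\), and the totally unimodular representation supplies it without any field-specific argument. No other step should pose a real obstacle.
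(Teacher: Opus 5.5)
Your proposal is correct and follows essentially the same route as the paper: a $\operatorname{GF}(2)$-representation in characteristic two, an integral totally unimodular representation otherwise, and invariance of the vanishing of minors under the inclusion $\F_0\subseteq\F$, applied both to the factors and to their Kronecker product. You then conclude with Proposition~\ref{thm:basic-represented-product} and the canonical identification of prime fields, exactly as the paper does.
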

\begin{proof}
If \(\operatorname{char}(\F)=2\), choose a
\(\operatorname{GF}(2)\)-representation \(A_i\) of each \(M_i\).
Otherwise, each \(M_i\) is regular by
Fact~\ref{fact:matroid-classes}; choose an integral TU representation
\(A_i\) \cite[Theorem~6.6.3]{oxley} and interpret it over \(\F_0\).
Its square minors lie in \(\{0,1,-1\}\), so it represents
\(M_i\) over \(\F_0\).

In both cases, extending scalars from \(\F_0\) to \(\F\) preserves
every column rank of each \(A_i\) and of
\(A_1\otimes\cdots\otimes A_s\), since a field inclusion
preserves precisely which minors vanish.  Proposition~\ref{thm:basic-represented-product}
now gives \eqref{eq:prime-field-invariance}.
The remaining assertions follow from the description of \(\F_0\).
\end{proof}

When all factors are regular, their TU representations may be used
over every field, but the Kronecker product need not be TU.
Its minors may vanish in one characteristic and remain nonzero in
another.  This possible dependence on the field was also noted in
\cite[Remark~4.15 of the expanded version]{berczi-skew-tensor}.
Remark~\ref{ex:regular-characteristic} illustrates both this
dependence on the characteristic and the nonuniqueness of abstract
tensor products, even for regular factors.  If some binary factor is
nonregular, every common representation field has characteristic two,
so Lemma~\ref{lem:prime-field-reduction} gives the same labelled
product over all such fields.

The associativity assertion concerns the representing vector spaces.
In characteristic two, every represented product of binary factors is
binary by Lemma~\ref{lem:prime-field-reduction}, so the operation
is also associative on binary matroids.  In other characteristics an
intermediate product need not be binary; higher products are therefore
defined directly by \eqref{eq:regular-tensor-definition}.

For matrices \(A,B\) with columns labelled by \(E,F\), respectively,
the natural identification
\(\F^E\otimes_{\F}\F^F\cong\F^{E\times F}\) gives
\[
\row(A\otimes B)=\row(A)\otimes_{\F}\row(B).
\]
Matrices with the same row space have the same column dependencies,
so this tensor subspace determines \(M[A\otimes B]\).
The graph constructions in Section~\ref{sec:positive-constructions}
use this row-space description.

For finite graphs \(G_1,\ldots,G_s\), we use the abbreviation
\[
M_{\F}(G_1,\ldots,G_s)
 :=T_{\F}(M(G_1),\ldots,M(G_s)).
\]
Graphic matroids are regular, so all the preceding conclusions apply.
In particular, any incidence representations of the cycle matroids
may be used to compute this product.

\subsection{Minors, direct sums, and simplification}
\label{subsec:minors-direct-sums}
The following properties hold for binary factors representable over
a common field.
For abstract tensor products, a special contraction argument appears
in \cite[Remark~2.3]{las-vergnas-products}, and the general
minor property is proved in
\cite[Proposition~2.7]{anderson-matroid-products}; see also
\cite[Lemma~4.1 of the expanded version]{berczi-skew-tensor}.
We give the matrix argument because it identifies the induced
representations explicitly.

\begin{proposition}\label{thm:factor-minors}
Let \(\F\) be a field, \(s\ge2\), and \(M_1,\ldots,M_s\) be binary
matroids representable over \(\F\).  If \(N_i\) is
isomorphic to a minor of \(M_i\) for each \(i\in[s]\), then
\(T_{\F}(N_1,\ldots,N_s)\) is isomorphic to a minor of
\(T_{\F}(M_1,\ldots,M_s)\).
\end{proposition}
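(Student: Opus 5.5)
By Proposition~\ref{thm:basic-represented-product}, the product does not depend on the chosen representations and is compatible with permuting factors. It therefore suffices to treat one factor at a time, replacing one \(M_i\) by a single-element deletion or contraction while the other factors stay fixed, and then to iterate. Isomorphisms \(N_i\cong M_i'\) with \(M_i'\) a minor of \(M_i\) then induce an isomorphism of products, because relabelling columns of a factor representation relabels the product. After reordering, I may assume that the modified factor is the first one. Write \(A=A_1\), a full-row-rank \(\F\)-representation of \(M=M_1\), and \(B=A_2\otimes\cdots\otimes A_s\). Then \(P=M[A\otimes B]\) has ground set \(E(M)\times F\), where \(F=E(M_2)\times\cdots\times E(M_s)\).

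\emph{Deletion.} For \(e\in E(M)\), the matrix \(A_{E-e}\) represents \(M\backslash e\). The submatrix of \(A\otimes B\) on the columns \((E-e)\times F\) is \(A_{E-e}\otimes B\). Hence \(T_{\F}(M\backslash e,M_2,\ldots)=P\backslash(\{e\}\times F)\).

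\emph{Contraction.} If \(e\) is a loop, then \(M/e=M\backslash e\) and we are done. Otherwise, row operations and a column scaling, which are allowed by Proposition~\ref{thm:basic-represented-product}, let me take \(\boldsymbol a_e=\boldsymbol e_1\). Then deleting the first row of \(A_{E-e}\) gives a matrix \(A'\) representing \(M/e\); this is the standard fact \cite[Proposition~3.2.6-type argument]{oxley}. I will show that
\[
P/(\{e\}\times F)\backslash\{\text{nothing}\}=M[A'\otimes B]
\]
on the ground set \((E-e)\times F\). Assume first that \(B\) has full row rank, say \(k\) rows; this can be arranged by row-reducing each factor. In block form \(A\otimes B\) has \(k\)-row blocks indexed by the rows of \(A\). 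The columns indexed by \(\{e\}\times F\) are \(\boldsymbol e_1\otimes\boldsymbol b_f\). These vectors span exactly the first block \(\boldsymbol e_1\otimes\F^k\), since \(B\) has full row rank. Contracting a set \(Z\) in a vector matroid amounts to projecting the remaining columns modulo \(\operatorname{span}(Z)\). Projecting modulo the first block deletes the first block of rows. What remains, on the columns \((E-e)\times F\), is exactly \(A'\otimes B\). If instead some factor has rank zero, all tensor columns are zero and the claim is trivial.

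The step that needs care is the contraction step, specifically the fact that the fibre \(\{e\}\times F\) spans the whole first block. This fails unless \(B\) has full row rank, so I will explicitly row-reduce \(B\) and discard its zero rows first. I will also note that \(\otimes\) of full-row-rank matrices has full row rank, which holds because Kronecker products multiply ranks. With this in hand, the deletion and contraction steps applied repeatedly to each factor, reordered to the front each time, give the proposition.
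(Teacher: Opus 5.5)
Your proposal is correct and follows essentially the same route as the paper. You reduce to a single deletion or contraction in the first coordinate with \(B=\bigotimes_{j\ne i}A_j\) of full row rank, read off deletion as \(A_{E\setminus\{e\}}\otimes B\), and for contraction normalize \(\boldsymbol a_e=\boldsymbol e_1\) and strip the first block of rows. The only difference is cosmetic: you quotient directly by the span \(\boldsymbol e_1\otimes\F^k\) of the fibre, whereas the paper chooses a basis \(J\) of \(M[B]\) with \(B_J=I_r\), pivots on the columns \(\{e\}\times J\), and notes that the rest of the fibre becomes loops.
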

\begin{proof}
Let \(A=[\boldsymbol a_e:e\in E]\) and
\(B=[\boldsymbol b_f:f\in F]\) be arbitrary full row rank matrices
over \(\F\).  We first prove
the assertion for a single deletion or contraction in the first factor.  Let \(e\in E\).  Deleting the fibre \(\{e\}\times F\) simply deletes from
\(A\otimes B\) the columns whose first label is \(e\).  Hence
\[
M[A\otimes B]\backslash(\{e\}\times F)=M[A_{E\setminus\{e\}}\otimes B].
\]

We next contract the fibre.  If \(e\) is a loop of \(M[A]\), then
\(\boldsymbol{a}_e=\boldsymbol{0}\), so every element of \(\{e\}\times F\)
is a loop of \(M[A\otimes B]\), and contraction agrees with deletion.
Suppose  \(e\) is a nonloop.  Set \(E_0=E\setminus\{e\}\).  Apply
elementary row operations so that the column labelled by \(e\) is
\(\boldsymbol{e}_1\).  Then
\[
A=
 \left[
 \begin{array}{c|c}
   \boldsymbol{\alpha}&1\\ \hline
   A_0&\boldsymbol{0}
 \end{array}
 \right],
 \qquad
 \text{with column blocks }E_0\mid\{e\}.
\]
Here \(\boldsymbol{\alpha}\) is a row vector, and the standard matrix rule for
contraction \cite[Proposition~3.2.6]{oxley} identifies \(A_0\) as a
representation of \(M[A]/e\) on \(E_0\).  Let \(r=\rank(B)\).
If \(r=0\), every tensor column is zero, so contraction of the fibre
agrees with deletion and leaves the matroid \(M[A_0\otimes B]\).
Assume \(r>0\), and choose a basis \(J\) of \(M[B]\).
Since \(B\) has full row rank, row operations give \(B_J=I_r\).
Put \(B_0=B_{F\setminus J}\).  Grouping tensor rows by the rows of
\(A\) and columns into \(E_0\times F\), \(\{e\}\times J\), and
\(\{e\}\times(F\setminus J)\), we obtain
\[
A\otimes B=
 \left[
 \begin{array}{c|c|c}
   \boldsymbol{\alpha}\otimes B&I_r&B_0\\ \hline
   A_0\otimes B&0&0
 \end{array}
 \right].
\]
The columns labelled by \(\{e\}\times J\) are the first \(r\) standard
basis vectors.  Contracting them deletes these columns and their
pivot rows \cite[Proposition~3.2.6]{oxley}, leaving \([\,A_0\otimes B\mid 0\,]\).
The columns labelled by \(\{e\}\times(F\setminus J)\) are therefore
loops, whose contraction agrees with deletion.  Consequently,
\[
M[A\otimes B]/(\{e\}\times F)=M[A_0\otimes B].
\]
Thus contracting a fibre induces contraction of its label in the
first factor.  Both matrix identities hold for arbitrary representing
matrices \(A\) and \(B\).

For the stated product, fix a coordinate \(i\), permute it to the
first position, and put \(A=A_i\) and
\(B=\bigotimes_{j\ne i}A_j\), choosing full row rank factor matrices.
The rank formula for Kronecker products shows that \(B\) has full
row rank, including the rank-zero case.  The preceding identities
lift every deletion and contraction in \(M_i\) to a minor operation
in the full product.  Repeat in each coordinate.  The induced factor
matrices represent the corresponding binary minors, and
Proposition~\ref{thm:basic-represented-product} permits their
replacement by any other representations of those minors.
Relabelling yields the asserted isomorphism.
\end{proof}

If every factor has a nonloop, fixing one nonloop in every
coordinate except \(i\) identifies \(M_i\) with a restriction of
the full product: tensoring with a fixed nonzero vector is injective.
A regular, graphic, or cographic product therefore has factors in the
corresponding minor-closed class.  The nonloop hypothesis is necessary,
since a rank-zero factor makes every tensor column zero.

\begin{proposition}\label{prop:tensor-direct-sums}
For a field \(\F\), binary matroids \(M,N_1,N_2\) representable over
\(\F\), and an integer \(q\ge0\),
\begin{equation}\label{eq:free-factor}
 T_{\F}(M,U_{q,q})\cong M^{\oplus q},
\end{equation}
and, under the natural ground-set identification,
\[
T_{\F}(M,N_1\oplus N_2)
 =T_{\F}(M,N_1)\oplus T_{\F}(M,N_2).
\]
The same identities hold in any coordinate of a higher product.
\end{proposition}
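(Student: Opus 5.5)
Both identities come down to a block computation with Kronecker products, using Proposition~\ref{thm:basic-represented-product} so that any convenient representation of each factor may be chosen. Fix an \(\F\)-representation \(A=[\boldsymbol a_e:e\in E]\) of \(M\).

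\emph{Free factor.} Represent \(U_{q,q}\) by \(I_q\), with columns labelled by \([q]\). In \(A\otimes I_q\) the column at \((e,j)\) is \(\boldsymbol a_e\otimes\boldsymbol e_j^{(q)}\). Reordering the tensor rows as \(I_q\otimes A\) is a simultaneous row permutation, which does not change the matroid. After this reordering the matrix is block diagonal, with \(q\) diagonal copies of \(A\), and the \(j\)-th block carries the columns \(E\times\{j\}\). A block-diagonal matrix represents the direct sum of the matroids of its blocks, since the column supports lie in disjoint coordinate sets. Hence \(T_{\F}(M,U_{q,q})\) is the direct sum of the copies \(M[A]\) on the fibres \(E\times\{j\}\), which gives \eqref{eq:free-factor} through \((e,j)\mapsto e\) in the \(j\)-th copy. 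When \(q=0\) both sides are empty.

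\emph{Direct sums.} Let \(B_1,B_2\) represent \(N_1,N_2\) on \(F_1,F_2\). Then \(B=\begin{bmatrix}B_1&0\\0&B_2\end{bmatrix}\) represents \(N_1\oplus N_2\) on \(F_1\sqcup F_2\). By bilinearity of the Kronecker product, grouping the rows of \(A\otimes B\) by the two row blocks of \(B\) and the columns by \(E\times F_1\) and \(E\times F_2\) gives, after a row permutation,
\[
\begin{bmatrix}A\otimes B_1&0\\0&A\otimes B_2\end{bmatrix}.
\]
Indeed, each column \(\boldsymbol a_e\otimes\binom{\boldsymbol b_f}{\boldsymbol 0}\) is supported on the coordinates that come from the first block, and similarly for the second block. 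The block-diagonal principle then gives equality of labelled matroids. Zero rows or rank-zero blocks cause no difficulty, because projective equivalence allows zero rows and a block with no rows yields loops on both sides.

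\emph{Higher products.} By the permutation and associativity statements of Proposition~\ref{thm:basic-represented-product}, the chosen coordinate can be moved to the last position. Set \(A=A_1\otimes\cdots\otimes A_{s-1}\), which is a matrix over \(\F\), possibly with nonbinary matroid. The two computations above used only the matrix \(A\) and never binarity of \(M\), so they apply verbatim. The only point needing care is the bookkeeping of labels under the row permutation identifying \(A\otimes B\) with the block form. There is no real obstacle here, since a row permutation never changes the column labels.
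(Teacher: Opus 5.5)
Your proof is correct and follows essentially the same route as the paper. You represent \(U_{q,q}\) by \(I_q\) and \(N_1\oplus N_2\) by a block-diagonal matrix, permute rows and labelled columns of the Kronecker product into block-diagonal form, group the remaining coordinates into one matrix for higher products, and invoke representation invariance; your explicit remark that only the matrix \(A\) is used, so a possibly nonbinary intermediate product causes no trouble, is a detail the paper leaves implicit.
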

\begin{proof}
Use the identity matrix \(I_q\) to represent \(U_{q,q}\) and a
block diagonal matrix to represent \(N_1\oplus N_2\).
After permuting rows and labelled columns, the Kronecker product
is block diagonal with precisely the asserted summands.  The same
argument applies with the remaining coordinates grouped into one
matrix.  Representation invariance completes the proof.
\end{proof}

The corresponding abstract statements appear in
\cite[Proposition~2.5 and Lemma~2.3]{anderson-matroid-products}.
The direct sum identity specializes to the usual block decomposition
for graphs.
\begin{proposition}\label{prop:block-decomposition}
Let \(G,H\) be simple graphs with nonempty edge sets, and let \(\F\)
be a field.  Let \(G_1,\ldots,G_s\) and \(H_1,\ldots,H_t\) be the
blocks of \(G\) and \(H\), respectively, with nonempty edge sets.  Then
\[
 M_{\F}(G,H)=\bigoplus_{i=1}^{s}\bigoplus_{j=1}^{t}
 M_{\F}(G_i,H_j).
\]
\end{proposition}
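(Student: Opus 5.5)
The plan is to reduce the statement to the direct sum identity of Proposition~\ref{prop:tensor-direct-sums}, applied once in each coordinate. The only graph-theoretic input is the block decomposition of the cycle matroid. For a simple graph \(G\), the blocks \(G_1,\ldots,G_s\) with nonempty edge sets partition \(E(G)\). By the definition of block used in Section~\ref{subsec:matroid-preliminaries}, each \(M(G_i)\) is connected and \(M(G)|E(G_i)=M(G_i)\). Moreover,
\[
 M(G)=M(G_1)\oplus\cdots\oplus M(G_s)
\]
under the identity on \(E(G)\). This holds because every cycle of \(G\) lies in a single block. Equivalently, the components of \(M(G)\) are exactly the edge sets of the blocks with edges; bridges give coloop components \(U_{1,1}\), and isolated vertices contribute nothing. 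I would cite this as the standard description of the connected components of a cycle matroid (Oxley, Chapter~4). The same decomposition holds for \(H\) with blocks \(H_1,\ldots,H_t\).

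Next I would apply the second identity of Proposition~\ref{prop:tensor-direct-sums} in the second coordinate, using induction on \(t\) to pass from two summands to finitely many. This gives
\[
 T_{\F}(M(G),M(H))=\bigoplus_{j=1}^t T_{\F}(M(G),M(H_j)).
\]
The proposition notes that the identity holds in any coordinate, and Proposition~\ref{thm:basic-represented-product} supplies the natural permutation isomorphism. So I would apply it again in the first coordinate to each summand, again by induction on \(s\). This splits \(T_{\F}(M(G),M(H_j))\) as \(\bigoplus_i T_{\F}(M(G_i),M(H_j))\).

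Finally I would check that all identifications are equalities of labelled matroids. The ground set \(E(G)\times E(H)\) is the disjoint union of the rectangles \(E(G_i)\times E(H_j)\), and each summand lives on its own rectangle. The direct sum identity is stated under the natural ground-set identification, so the equality is literal, matching the ``\(=\)'' in the statement. The hypotheses needed are mild: the factors \(M(G_i)\) and \(M(H_j)\) are graphic and hence regular, so they are representable over \(\F\) and the products are defined. The simplicity and nonempty-edge-set hypotheses only ensure that every edge lies in a block with edges and that the index sets are nonempty.

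I expect the main obstacle to be the precise justification that the block edge sets are exactly the matroid components, so that the decomposition of \(M(G)\) is an honest direct sum with no edge lost. This requires matching the paper's definition of block (a maximal connected subgraph with connected cycle matroid) with Oxley's result. I would argue it directly: two edges lie in a common circuit if and only if they lie in a common block, and matroid connectivity is the transitive closure of the relation ``lie in a common circuit''. Everything after that is bookkeeping.
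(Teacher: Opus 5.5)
Your proposal is correct and follows essentially the paper's proof. You decompose \(M(G)\) and \(M(H)\) as direct sums of the cycle matroids of their blocks with edges, using that every circuit lies in a single block and Oxley's description of components. You then apply the direct-sum identity of Proposition~\ref{prop:tensor-direct-sums} in each coordinate; your induction on the number of summands and your check of the labelled equality are the bookkeeping the paper leaves implicit.
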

\begin{proof}
Every circuit is contained in a single block, so
\cite[Section~4.1 and Corollary~4.2.9]{oxley} gives
\[
 M(G)=\bigoplus_{i=1}^{s}M(G_i),\qquad
 M(H)=\bigoplus_{j=1}^{t}M(H_j).
\]
Applying Proposition~\ref{prop:tensor-direct-sums} in both coordinates
proves the asserted decomposition.  If either \(G_i\) or \(H_j\) is a bridge block,
then \eqref{eq:free-factor} identifies the corresponding summand with
the cycle matroid of the other block.
\end{proof}

We next remove loops and parallel elements uniformly for all numbers
of factors.  The abstract two-factor versions are
\cite[Lemma~2.4]{anderson-matroid-products} and
\cite[Lemma~4.4 of the expanded version]{berczi-skew-tensor}.
For each \(M_i\), choose a set \(S_i\) containing exactly one element
from each nonloop parallel class.  Write
\(\operatorname{si}(M_i)=M_i|S_i\), and let \(\rho_i\) send a
nonloop to its representative in \(S_i\).

\begin{proposition}\label{prop:regular-simplification}
Let \(\F\) be a field, \(s\ge2\), and \(M_1,\ldots,M_s\) be finite
binary matroids representable over \(\F\).  Then
\[
 T_{\F}\bigl(\operatorname{si}(M_1),\ldots,\operatorname{si}(M_s)\bigr)
 =T_{\F}(M_1,\ldots,M_s)\bigm|(S_1\times\cdots\times S_s).
\]
The loops of the full product are precisely the tuples with a loop
in at least one coordinate.  Its nonloop parallel classes are the
fibres of \(\rho_1\times\cdots\times\rho_s\).
If every \(S_i\) is nonempty, the full product is graphic,
cographic, regular, or binary exactly when the product of the
simplifications belongs to that class.  If some \(S_i\) is empty,
the full product is an all-loop matroid, possibly empty, and belongs
to all four classes.
\end{proposition}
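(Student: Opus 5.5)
Fix labelled \(\F\)-representations \(A_i\) of \(M_i\); by Proposition~\ref{thm:basic-represented-product} we may compute everything from \(P:=M[A_1\otimes\cdots\otimes A_s]\). The restriction identity is immediate: the columns of \(A_1\otimes\cdots\otimes A_s\) indexed by \(S_1\times\cdots\times S_s\) form exactly \((A_1)_{S_1}\otimes\cdots\otimes(A_s)_{S_s}\), and each \((A_i)_{S_i}\) represents \(\operatorname{si}(M_i)=M_i|S_i\). So the first display holds under the natural labelling.

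For loops, I will use that a Kronecker product of vectors, \(\boldsymbol a_{e_1}\otimes\cdots\otimes\boldsymbol a_{e_s}\), is zero if and only if some factor \(\boldsymbol a_{e_i}\) is zero. Hence \((e_1,\dots,e_s)\) is a loop exactly when some \(e_i\) is a loop of \(M_i\).

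For parallel classes, take two nonloop tuples. If \(\rho_i(e_i)=\rho_i(e_i')\) for all \(i\), then \(\boldsymbol a_{e_i'}=\lambda_i\boldsymbol a_{e_i}\) with \(\lambda_i\neq0\), so the two tensor columns differ by \(\prod\lambda_i\neq0\) and are parallel (or equal labels). Conversely, suppose two nonzero pure tensors are proportional. Then the factor vectors are proportional coordinatewise. To see this, use the standard fact that \(u_1\otimes\cdots\otimes u_s=c\,v_1\otimes\cdots\otimes v_s\neq0\) forces \(u_i\in\F v_i\) for each \(i\). This follows by contracting with linear functionals on all coordinates but \(i\) that are nonzero on the relevant \(u_j\) and \(v_j\). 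Hence \(e_i,e_i'\) are equal or parallel in \(M_i\) for every \(i\), that is, \(\rho_i(e_i)=\rho_i(e_i')\). This is the only step needing a small argument, and I expect it to be the main (mild) obstacle.

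For the class statements, I will proceed in two cases. If every \(S_i\) is nonempty, the previous two paragraphs show that \(P\) is obtained from \(P|(S_1\times\cdots\times S_s)\) by adding loops and parallel copies of existing elements. Conversely, the restriction is a minor of \(P\). Each of the four classes (graphic, cographic, binary, regular) is minor-closed by Fact~\ref{fact:minor-direct-sums}. Each class is also closed under adding loops and parallel elements. For binary and regular matroids, repeat columns or add zero columns in a representation. For graphic matroids, add loops or parallel edges in the graph. For cographic matroids, pass to duals: adding a loop in \(M^*(G)\) corresponds to adding a coloop (a pendant bridge) to \(G\), and a parallel element corresponds to a series extension, namely subdividing the corresponding edge of \(G\). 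This gives the equivalence. If some \(S_i\) is empty, every element of \(M_i\) is a loop, so every tuple has a loop coordinate and \(P\) is an all-loop matroid. Such a matroid is \(M\) of an edgeless-plus-loops graph, is self-evidently in all four classes, and is empty when \(E(M_i)=\emptyset\).
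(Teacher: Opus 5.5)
Your proposal is correct and takes essentially the same route as the paper. You obtain the displayed identity by restricting the Kronecker matrix, use the vanishing and proportionality criteria for pure tensors (via linear functionals on all but one coordinate), and combine minor-closure with closure under adding loops and parallel elements, using duality in the cographic case; the only cosmetic difference is that you take functionals nonzero on the factors of both tensors, whereas the paper takes them nonzero on one tensor and lets the equality force nonvanishing on the other.
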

\begin{proof}
Restricting the tensor matrix to \(S_1\times\cdots\times S_s\)
proves the displayed identity.  A pure tensor of vectors vanishes
exactly when one factor vanishes.  Two nonzero pure tensors are
proportional exactly when their factor vectors are proportional in
every coordinate.  For the nontrivial direction, apply linear
functionals on all but one coordinate, choosing them to be nonzero
on the vectors of one tensor; the equality ensures nonzero values
on the other tensor as well.  The remaining vectors are therefore
proportional.  Applying this argument in each coordinate proves
the description of the parallel classes.

Thus the full product is obtained from the displayed restriction
by adjoining loops and parallel elements.  All four classes are
closed under these operations.  In a graphic representation, add
loops or parallel edges.  For cographic matroids, duality converts
the operations into coloop additions and series extensions, which
preserve graphicity \cite[Section~5.4]{oxley}.
For binary and regular matroids, adjoining a zero or duplicate column
preserves representability over \(\operatorname{GF}(2)\) and total
unimodularity, respectively.
Conversely, all four classes are minor-closed.  If some \(S_i\)
is empty, every tensor column is zero and the stated degenerate
case follows.
\end{proof}

\section{Reduction to graphic factors and constructions}\label{sec:positive-constructions}
This section develops the tools for the classification: the minor
containments in Proposition~\ref{prop:specified-minors}, the reduction
to graphic factors in Theorem~\ref{thm:graphic-reduction}, and the
cographic and totally unimodular constructions in
Propositions~\ref{prop:outerplanar-cographic} and
\ref{prop:triangle-series-parallel}, respectively.

\subsection{Reduction to graphic factors}\label{subsec:graphic-reduction}
We collect the minor containments used in the reduction and in the
classifications of two-factor and higher products.  Their proofs are
given in Section~\ref{sec:minor-proofs}.
\begin{proposition}\label{prop:specified-minors}
Let \(\F\) be a field.
\begin{enumerate}[label=\textup{(\roman*)},leftmargin=2.5em]
\item For each pair
\[
 (G,H)\in\{(C_3,K_4),\ (K_4-e,K_4-e),\ (C_4,K_{2,3})\},
\]
where \(e\) is an edge of \(K_4\), the matroid \(M_{\F}(G,H)\)
has a \(U_{2,4}\)-minor if \(\operatorname{char}(\F)\ne2\), and an
\(F_7\)-minor if \(\operatorname{char}(\F)=2\).  In particular, each
of these three products is nonregular over every field.
\item The matroid \(M_{\F}(C_3,K_{2,3})\) has an
\(M(K_{3,3})\)-minor and is therefore not cographic.
\item The matroid \(T_{\F}(U_{2,3},U_{2,3},U_{2,3})\) has a
\(U_{2,4}\)-minor if \(\operatorname{char}(\F)\ne2\), and an
\(F_7\)-minor if \(\operatorname{char}(\F)=2\).  In particular,
it is nonregular over every field.
\end{enumerate}
\end{proposition}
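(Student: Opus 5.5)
The plan is to treat all three parts by explicit matrix computation: exhibit concrete contraction and deletion sets in each product and identify the resulting small matroid. Fix integral incidence representations (directed vertex--edge incidence matrices with one row removed) for \(M(C_3)\), \(M(C_4)\), \(M(K_4)\), \(M(K_4-e)\) and \(M(K_{2,3})\). Use \([I_2\mid-\boldsymbol1_2]\) for \(U_{2,3}\cong M(C_3)\). The Kronecker product of these is an integer matrix, and by Proposition~\ref{thm:basic-represented-product} and Lemma~\ref{lem:prime-field-reduction} it represents the product over every field via \(\mathbb Z\to\F\). Every contraction will be done on a set of columns that can be row reduced to unit vectors over \(\mathbb Z\) with unimodular pivots, so the resulting minor is again given by an integer matrix read in any characteristic.

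For (i) and (iii) I would not aim separately for \(U_{2,4}\) and \(F_7\). Instead, in each of the four products I would find a set \(X\) to contract and a set \(Y\) to delete whose quotient is represented, over every field, by the integer matrix
\[
 D=\left[\,I_3\;\middle|\;
 \begin{array}{cccc}1&1&0&1\\1&0&1&1\\0&1&1&1\end{array}\right].
\]
In characteristic two this is the standard representation of \(F_7\). In characteristic \(\ne2\) the three columns \((1,1,0)^T,(1,0,1)^T,(0,1,1)^T\) are independent, since their determinant is \(-2\). Then \(M[D]\) is the non-Fano matroid, and contracting the element \((1,1,1)^T\) leaves a rank-two matroid with four pairwise nonparallel points, that is, a \(U_{2,4}\)-minor. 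This reduces (i) and (iii) to a single integral identity per pair, which is checked once over \(\mathbb Z\).

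To find \(X\), I would mimic the proof of Proposition~\ref{thm:factor-minors}: contract whole fibres where possible, plus a few individual tensor elements \((e,f)\). The rank bookkeeping \(r_P(X\times Y)=r_M(X)r_N(Y)\) indicates how many individual contractions are needed. For example, \(M_{\F}(C_3,K_4)\) has rank \(6\) and \(M_{\F}(K_4-e,K_4-e)\) has rank \(9\), so three, respectively six, independent elements must be contracted to reach rank \(3\). For the pair \((C_3,K_4)\) I would exploit symmetry: choose \(X\) among tensors \(\boldsymbol a_e\otimes\boldsymbol b_f\) with \(e\) ranging over a basis of \(C_3\) and \(f\) over a spanning tree of \(K_4\). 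Then look for seven surviving columns that reduce to \(D\), with the cross terms \(\boldsymbol a_{e}\otimes\boldsymbol b_{f}\) for \(e\) the third edge of \(C_3\) supplying the \((1,1,0)\)-type vectors. The same template applies to \((C_4,K_{2,3})\), \((K_4-e,K_4-e)\) and \(U_{2,3}^{\otimes3}\), where the last has rank \(8\) on \(27\) elements. I expect this search for the correct sets \(X,Y\) to be the main obstacle; once found, verification is a short row reduction.

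For (ii), \(M_{\F}(C_3,K_{2,3})\) has rank \(8\) on \(18\) elements, and \(M(K_{3,3})\) has rank \(5\) on \(9\) elements. So I would contract three independent elements and delete six. To certify the quotient, row reduce it to a standard form \([I_5\mid D']\) over \(\mathbb Z\). I would then check that \(D'\), read mod \(2\), equals the fundamental-circuit matrix of \(K_{3,3}\) with respect to some spanning tree, up to permutation. By unique representability of binary matroids this identifies the minor as \(M(K_{3,3})\) in every characteristic, because the quotient is regular-looking and binary in each field. Non-cographicity then follows from Fact~\ref{fact:cographic-excluded}.

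A convenient guide for choosing the sets is the expected identity \(M_{\F}(C_3,C_4)\cong M^*(K_{3,4})\), valid for the subproduct obtained by restricting \(K_{2,3}\) to a \(4\)-cycle. One contraction in the remaining path of \(K_{2,3}\) should convert the \(K_{3,4}\)-type structure into one containing \(K_{3,3}\) as a graph minor of the dual side.
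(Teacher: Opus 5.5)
The overall strategy is viable, but as written the proposal is a search plan rather than a proof, and one of its concrete steps fails. The statement is a finite existence claim, so its whole content lies in explicit contraction and restriction sets and their verification. You never produce them, and you yourself call that search the main obstacle. The paper supplies, for each case and characteristic, sets \(C,R\) and a certificate \(L\) with \(LK_C=0\) and \(\ker L=\col(K_C)\), which Lemmas~\ref{prop:u24-contraction} and~\ref{prop:f7-contraction} turn into minors.

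Your idea of one integral witness for all fields does work, and the paper's characteristic-two sets already realize it. Over \(\mathbb Z\), each of those sets \(C\) can be eliminated with \(\pm1\) pivots. In all four cases the seven retained columns are then projectively equivalent to \(D\) over \(\mathbb Z\). For \((C_3,K_4)\), for instance, they become, up to sign, \(\boldsymbol e_1,\boldsymbol e_2,\boldsymbol e_3,\boldsymbol e_1-\boldsymbol e_2,\boldsymbol e_2-\boldsymbol e_3,\boldsymbol e_1-\boldsymbol e_3,\boldsymbol e_1-\boldsymbol e_2+\boldsymbol e_3\). The only Fano line that is independent over \(\QQ\) is \(\{\boldsymbol e_2,\boldsymbol e_1-\boldsymbol e_3,\boldsymbol e_1-\boldsymbol e_2+\boldsymbol e_3\}\), with determinant \(\pm2\). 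So your route would need one witness per pair instead of the paper's two, but the witnesses must still be exhibited.

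Your final step in characteristic \(\ne2\) is wrong. In \(M[D]\), the point \((1,1,1)^T\) lies on the three lines through \(\boldsymbol e_1,(0,1,1)^T\), through \(\boldsymbol e_2,(1,0,1)^T\), and through \(\boldsymbol e_3,(1,1,0)^T\). Contracting it therefore leaves three parallel pairs, whose simplification is \(U_{2,3}\), not \(U_{2,4}\). Contract a point of the missing line instead, say \((1,1,0)^T\). Then \(\boldsymbol e_1,\boldsymbol e_3,(1,0,1)^T,(0,1,1)^T\) become \(\boldsymbol e_1,\boldsymbol e_3,\boldsymbol e_1+\boldsymbol e_3,-\boldsymbol e_1+\boldsymbol e_3\), which are pairwise nonparallel when \(\operatorname{char}(\F)\ne2\). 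Alternatively, note that the non-Fano matroid is nonbinary and apply Fact~\ref{fact:binary-excluded}.

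In~(ii), checking \(D'\) modulo \(2\) identifies only the \(\operatorname{GF}(2)\)-matroid. In characteristic \(\ne2\), the matroid of \([I_5\mid D']\) is governed by which minors of \(D'\) vanish in \(\F\). Unique representability is usable only once this matroid is known to be binary. You assert this (``binary in each field'') without proof, and it is not automatic for these products; part~(i) says exactly that it fails for other pairs. There are two repairs:
\begin{enumerate}
\item Verify over \(\mathbb Z\) that the quotient matrix itself is projectively equivalent to a signed incidence matrix of \(K_{3,3}\). The paper does this via \(LK_R=D_{\mathrm{row}}A_T^{-1}A_{K_{3,3}}D_{\mathrm{col}}\) with \(\det A_T=1\).
\item Cite Proposition~\ref{prop:triangle-series-parallel}, which represents \(M_{\F}(C_3,K_{2,3})\) by a totally unimodular matrix. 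This labelled matroid is then independent of \(\F\), so your identification of the minor over \(\operatorname{GF}(2)\) transfers to every field.
\end{enumerate}
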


In characteristic two, the product in part~\textup{(iii)} is dual
to the nonregular simplicial matroid with three parts of size three
studied by Lindstr\"om \cite{lindstrom-nonregular}, as recorded in
\cite[Exercise~6.6, p.~112]{cordovil-lindstrom}.
Here we give explicit excluded minors over every field.

\begin{remark}\label{ex:regular-characteristic}
The two matroids
\[
 T_{\operatorname{GF}(2)}(U_{2,3},M(K_4))
 \quad\text{and}\quad
 T_{\QQ}(U_{2,3},M(K_4))
\]
are abstract tensor products of the same regular factors.
The first is binary by Lemma~\ref{lem:prime-field-reduction}; the
second has a \(U_{2,4}\)-minor by
Proposition~\ref{prop:specified-minors}\textup{(i)} and is nonbinary.
They are therefore nonisomorphic.  This example shows both that the
represented product can depend on the characteristic and that
abstract tensor products of regular matroids need not be unique.
\end{remark}

The intrinsic series--parallel and outerplanar conditions have the
following graph interpretations.
\begin{lemma}\label{lem:regular-sp-outerplanar}
Every series--parallel simple binary matroid is graphic, and it has a
representation by a simple graph whose cyclic blocks are
series--parallel networks.  An outerplanar simple binary matroid has a
simple outerplanar graph representation.  Conversely, the cycle matroid
of a simple outerplanar graph has neither an \(M(K_4)\)- nor an
\(M(K_{2,3})\)-minor.
For a simple graph \(G\), \(M(G)\) is a cactus matroid if and only if
\(G\) is a cactus.  It is a triangular cactus matroid if and only if
every cyclic block of \(G\) is a triangle.
\end{lemma}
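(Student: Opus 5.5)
The proof reduces everything to the graph-theoretic facts in Section~\ref{subsec:matroid-preliminaries}, working one block at a time. For the first assertion, let \(M\) be a simple binary matroid with no \(M(K_4)\)-minor. Then \(M\) has no \(F_7\)-minor, since contracting any element of \(F_7\) gives a rank-two binary matroid with seven elements, hence parallel classes; instead, deleting a point from \(F_7\) yields \(M(K_4)\), so \(F_7\) has an \(M(K_4)\)-minor. Dually, \(F_7^*\backslash e\cong M^*(K_4)\cong M(K_4)\), so \(F_7^*\) also has an \(M(K_4)\)-minor. Binary matroids have no \(U_{2,4}\)-minor by Fact~\ref{fact:binary-excluded}, so \(M\) is regular by Fact~\ref{fact:regular-excluded}.

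Next I would show that \(M\) has no \(M(K_5)\)-minor (since \(M(K_4)\) is a minor of it) and no \(M(K_{3,3})\)-minor (since \(K_{3,3}\) has a \(K_4\) graph minor, by contracting a perfect matching minus one edge, and Fact~\ref{fact:graph-minors} applies). Hence \(M\) is cographic by Fact~\ref{fact:cographic-excluded}. The same argument applies to \(M^*\): \(M^*(K_4)\cong M(K_4)\), \(M^*(K_5)\) and \(M^*(K_{3,3})\) contain \(M(K_4)\)-minors (dualize the previous containments), and this shows \(M^*\) is cographic. So \(M\) is graphic, \(M=M(G)\).

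I would take \(G\) to be simple with no isolated vertices. Since \(M\) is simple, \(G\) has no loops and no parallel edges. Each connected component of \(M\) is \(M(G_i)\) for a block \(G_i\), and I would make the representing graph a disjoint union of the blocks, which is harmless for the cycle matroid. A nonbridge block \(G_i\) is \(2\)-connected and simple, and \(M(G_i)\) is a minor of \(M\), so it has no \(M(K_4)\)-minor; by Fact~\ref{fact:series-parallel-minors}, \(G_i\) is a series--parallel network.

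For the outerplanar case, the concern is that a graph \(G\) representing \(M\) might have a \(K_{2,3}\) graph minor while \(M\) has no \(M(K_{2,3})\)-minor, because matroid minors are coarser. By Fact~\ref{fact:graph-minors}, however, a graph minor \(K_4\) or \(K_{2,3}\) of \(G\) gives a matroid minor \(M(K_4)\) or \(M(K_{2,3})\), which is excluded. So any simple representing graph has neither graph minor, and is outerplanar by Fact~\ref{fact:outerplanar-minors}. For the converse, let \(G\) be simple and outerplanar, and suppose \(M(G)\) has an \(M(H)\)-minor with \(H\in\{K_4,K_{2,3}\}\). Each such minor is \(M(G')\) for a graph minor \(G'\) of \(G\) (perform the contractions and deletions in \(G\)), and \(G'\) is outerplanar because outerplanarity is closed under graph minors. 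The step I expect to be the main obstacle is then to conclude that \(G'\) itself contains \(K_4\) or \(K_{2,3}\). Since \(M(G')\cong M(H)\) with \(H\) \(2\)-connected, after removing isolated vertices \(G'\) is a \(2\)-connected graph with the same cycle matroid. Whitney's \(2\)-isomorphism theorem then gives \(G'\cong H\): \(K_4\) is \(3\)-connected, and every Whitney twist of \(K_{2,3}\) about a \(2\)-separation yields \(K_{2,3}\) again. This contradicts Fact~\ref{fact:outerplanar-minors}.

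Finally, for the cactus statements, the connected components of \(M(G)\) are the \(M(B)\) for the blocks \(B\) with edges. In a simple graph such a block is a bridge (\(U_{1,1}\)) or \(2\)-connected. A \(2\)-connected block has \(M(B)\cong U_{m-1,m}\) exactly when \(B\) is a cycle of length \(m\ge3\), because a circuit matroid means the whole edge set is one circuit. So \(M(G)\) is a cactus matroid if and only if \(G\) is a cactus, and it is triangular if and only if every cyclic block is a triangle.
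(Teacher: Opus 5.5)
Your proof is correct, but the two substantive steps take different routes from the paper.

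For graphicity, the paper cites the Brylawski/Bonin--Long characterization. That result directly makes every component other than a coloop the cycle matroid of a series--parallel network. You instead apply Tutte's theorem (Fact~\ref{fact:cographic-excluded}) to both \(M\) and \(M^*\). This uses that \(M(K_4)\) is self-dual and is a minor of \(F_7\), \(F_7^*\), \(M(K_5)\), \(M(K_{3,3})\), and of the duals of the last two. You then recover the series--parallel structure block by block from Fact~\ref{fact:series-parallel-minors}. Your route stays within the Facts listed in the preliminaries, but it relies on a much deeper theorem than the situation requires.

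For the converse, you use Whitney's \(2\)-isomorphism theorem, whereas the paper gets \(G'\cong K_4\) or \(G'\cong K_{2,3}\) by counting. A simple \(2\)-connected graph representing \(M(K_4)\) has four vertices and six edges. One representing \(M(K_{2,3})\) has five vertices and six edges, so it is a theta graph whose three cycles have length four, which forces all three paths to have length two. The paper's count is elementary and self-contained. Your twisting argument is also valid: the only \(2\)-separations of \(K_{2,3}\) are at the degree-three pair, and twisting there returns \(K_{2,3}\). It does, however, import a theorem the paper avoids.

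One slip needs correcting. \(F_7^*\backslash e\) still has rank four, so it is not \(M^*(K_4)\); the right identity is \(F_7^*/e\cong(F_7\backslash e)^*\cong M^*(K_4)\cong M(K_4)\). Your aside about contracting an element of \(F_7\) is also miscounted (it leaves six elements, not seven), but it plays no role and can be dropped.
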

\begin{proof}
Let \(M\) be a simple binary matroid with no \(M(K_4)\)-minor.
Since \(M\) has neither a \(U_{2,4}\)- nor an \(M(K_4)\)-minor,
the series--parallel characterization
\cite{brylawski-series-parallel,bonin-long} implies that each
connected component other than a coloop is the cycle matroid of
a series--parallel network.  Choose a simple graph representation;
parallel elements are absent and the component decomposition agrees
with the block decomposition.  If \(M\) also avoids \(M(K_{2,3})\),
this graph cannot contain \(K_4\) or \(K_{2,3}\) as a graph minor, by
Fact~\ref{fact:graph-minors}; it is outerplanar by
Fact~\ref{fact:outerplanar-minors}.

Conversely, a minor of the cycle matroid of an outerplanar graph has
an outerplanar graph representation, since outerplanarity is closed
under graph deletion and contraction.  Neither \(M(K_4)\) nor
\(M(K_{2,3})\) has such a representation.  Indeed, after isolated
vertices are discarded, any graph representing either matroid is
simple and \(2\)-connected.  A graph representing \(M(K_4)\) has
four vertices and six edges, and hence is \(K_4\).  A graph
representing \(M(K_{2,3})\) has five vertices and six edges, so it
is a theta graph: three internally disjoint paths with the same
endpoints.  Its three cycles all have length four, so all three
paths have length two and the graph is \(K_{2,3}\).
Finally, connected components of \(M(G)\) correspond to blocks
of \(G\) with edges.  Such a component is a circuit matroid
exactly when the block is a cycle, and a single coloop exactly
when the block is a bridge.
\end{proof}

Every simple nonfree matroid has a circuit of size at least three.
Restricting to that circuit and contracting all but three elements
gives a \(U_{2,3}\)-minor.  This observation and
Proposition~\ref{prop:specified-minors}\textup{(i)} yield the main
reduction.
\begin{theorem}\label{thm:graphic-reduction}
Let \(M,N\) be simple nonfree binary matroids representable over
a common field \(\F\).
If \(T_{\F}(M,N)\) is regular, then each factor is a series--parallel
graphic matroid.
\end{theorem}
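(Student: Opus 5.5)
The plan is to combine the factor-minor property with the excluded $(C_3,K_4)$ containment, using the observation just before the statement.

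\textbf{Step 1: each factor has a $U_{2,3}$-minor.} Since $M$ and $N$ are simple and nonfree, each has a circuit of size at least three. Restricting to that circuit and contracting all but three of its elements leaves a $U_{2,3}$-minor, and $U_{2,3}\cong M(C_3)$.

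\textbf{Step 2: the factors are regular.} Both $M$ and $N$ have nonloops, since they are simple on nonempty ground sets. Fixing a nonloop of one factor therefore identifies the other factor with a restriction of $T_{\F}(M,N)$. So regularity of the product makes $M$ and $N$ regular, and in particular binary with no $U_{2,4}$-minor.

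\textbf{Step 3: no $M(K_4)$-minor.} Suppose, for contradiction, that $N$ has an $M(K_4)$-minor. Then $(M(C_3),M(K_4))$ occurs as factor minors of $(M,N)$. By Proposition~\ref{thm:factor-minors}, $M_{\F}(C_3,K_4)$ is isomorphic to a minor of $T_{\F}(M,N)$. By Proposition~\ref{prop:specified-minors}\textup{(i)}, $M_{\F}(C_3,K_4)$ has a $U_{2,4}$-minor when $\operatorname{char}(\F)\ne2$ and an $F_7$-minor when $\operatorname{char}(\F)=2$. In either case Fact~\ref{fact:regular-excluded} shows that $T_{\F}(M,N)$ is nonregular, a contradiction. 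The same argument applies to $M$ after exchanging the factors, using the natural isomorphism from Proposition~\ref{thm:basic-represented-product}.

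\textbf{Step 4: conclusion.} Each factor is now a simple binary matroid with no $M(K_4)$-minor, that is, series--parallel by definition. Lemma~\ref{lem:regular-sp-outerplanar} then says that each factor is graphic and is represented by a simple graph whose cyclic blocks are series--parallel networks.

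The only real content is Proposition~\ref{prop:specified-minors}\textup{(i)}, which may be assumed here. So no step should be an obstacle; the one point needing care is the bookkeeping of the characteristic split in Step 3.
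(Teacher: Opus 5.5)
Your proposal is correct and follows the paper's proof. A \(U_{2,3}\)-minor in one factor together with an \(M(K_4)\)-minor in the other would, by Proposition~\ref{thm:factor-minors}, make the nonregular product \(M_{\F}(C_3,K_4)\) of Proposition~\ref{prop:specified-minors}\textup{(i)} a minor of \(T_{\F}(M,N)\), and Lemma~\ref{lem:regular-sp-outerplanar} then gives graphicity; your Step~2 is harmless but unnecessary, since that lemma needs only that each factor is simple, binary, and free of \(M(K_4)\)-minors.
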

\begin{proof}
By the preceding observation, \(N\) has a \(U_{2,3}=M(C_3)\)-minor.
If \(M\) had an \(M(K_4)\)-minor, Proposition~\ref{thm:factor-minors}
would give \(T_{\F}(M(K_4),U_{2,3})\) as a minor of
\(T_{\F}(M,N)\).  This minor is nonregular over every field by
Proposition~\ref{prop:specified-minors}\textup{(i)}, contradicting minor
closure.  The same argument excludes \(M(K_4)\) from \(N\).
Apply Lemma~\ref{lem:regular-sp-outerplanar} to both factors.
\end{proof}

Graphic and cographic matroids are regular, so the same reduction
applies to either of those class-membership questions when both
simple factors are nonfree.  In characteristic different from two,
Fact~\ref{fact:matroid-classes} also reduces binarity to regularity.
In characteristic two, the product is binary by
Lemma~\ref{lem:prime-field-reduction}.  Thus, for two simple nonfree
factors, it remains to classify products of graphic factors and
translate the answer back to binary factors over a common
representation field.

\subsection{Cographic constructions}\label{subsec:cographic-constructions}
For a graph \(G\), fix an orientation \(o(G)\).  Its incidence
matrix \(A_{o(G)}\) over \(\F\) has rows indexed by \(V(G)\) and
columns by \(E(G)\): an edge directed from \(u\) to \(v\) has entry
\(1\) at \(u\), \(-1\) at \(v\), and zero elsewhere; a loop gives a
zero column.  Over every field,
\(M[A_{o(G)}]=M(G)\) and \(\rank(A_{o(G)})=r(G)\)
\cite[Lemma~5.1.3]{oxley}.  A \emph{full row rank incidence
representation} \(A_G\) has \(r(G)\) rows and
\(\row(A_G)=\row(A_{o(G)})\).

\Needspace{5\baselineskip}
A \emph{reduced incidence matrix} is obtained by deleting one vertex
row per component, including each isolated vertex.  The vertex rows
in each component sum to zero, so the retained rows span the same
row space.  There are \(r(G)\) retained rows, hence the reduced
matrix has full row rank.  These representations of \(M(G)\) may be used to
form \(M_{\F}(G,H)\) by
Proposition~\ref{thm:basic-represented-product}.

For a simple graph \(G\), write
\[
 Z_{\F}(G):=\ker(A_{o(G)}),\qquad
 B_{\F}(G):=\row(A_{o(G)})
\]
for its \(\F\)-cycle and cut spaces; compare
\cite[Chapters~4--5]{biggs-algebraic-graph-theory}.
For \(v\in V(G)\), let \(\partial_G(v)\) be the set of edges
incident with \(v\), and let \(\boldsymbol b_G(v)\) be the row of
\(A_{o(G)}\) indexed by \(v\).  For a cycle \(\Gamma\)
with a chosen cyclic orientation, let \(\boldsymbol z_\Gamma\) be
its edge vector, equal to \(1\) or \(-1\) on \(\Gamma\) according
as its orientation agrees or disagrees with \(o(G)\), and zero
off \(\Gamma\).  The incidence rows span \(B_{\F}(G)\), the cycle
vectors span \(Z_{\F}(G)\), and
\(B_{\F}(G)=Z_{\F}(G)^\perp\), where orthogonality is taken with
respect to the standard bilinear coordinate pairing on \(\F^{E(G)}\).
Thus the dual-representation
theorem \cite[Theorem~2.2.8 and Proposition~2.2.23]{oxley} gives
\begin{equation}\label{eq:cycle-space-dual-representation}
 A\text{ has full row rank and }\row(A)=Z_{\F}(G)
 \quad\Longrightarrow\quad
 M[A]\cong M^*(G).
\end{equation}

For \(k\ge3\), orient \(C_k\) cyclically and label its edges by
\([k]\).  Row operations on a reduced incidence matrix give
\begin{equation}\label{eq:cycle-incidence-matrix}
 \Delta_k=[I_{k-1}\mid-\boldsymbol{1}_{k-1}].
\end{equation}
Unless otherwise stated, we use this full row rank representation
of \(M(C_k)\) for cycle factors.  For full row rank incidence representations \(A_G\)
and \(A_H\) of simple graphs \(G,H\),
Proposition~\ref{thm:basic-represented-product} and
\[
 \row(A_G\otimes A_H)=B_{\F}(G)\otimes B_{\F}(H)
\]
show that cographicity of \(M_{\F}(C_m,G)\) follows from a graph
\(X_{m,G}\) whose labelled cycle space is
\(B_{\F}(C_m)\otimes B_{\F}(G)\).

For a plane graph, the unbounded face is the \emph{outer face}; the
other faces are \emph{bounded}.  In a \(2\)-connected simple plane
graph, the outer-face boundary is a cycle.  Its edges are the
\emph{outer edges}; the other edges are \emph{internal}.  The
\emph{geometric dual} \(G^*\) has a vertex for each face and a dual
edge \(e^*\) joining the faces on the two sides of \(e\), with a
loop if those faces coincide.  Under \(e\leftrightarrow e^*\),
\(M(G^*)\cong M^*(G)\), and dual edges of a bond form a cycle of
\(G^*\) \cite[Proposition~5.2.1]{oxley}.  The \emph{weak dual} of an
outerplane graph is obtained by deleting the outer-face vertex from
\(G^*\).

Let \(m\ge3\) and let \(G\) be a \(2\)-connected outerplanar
simple graph.  Fix an outerplane embedding and an orientation
\(o(G)\); orient \(C_m\) cyclically and label its edges by \([m]\).
Write \(E_\partial(G)\) for the outer edges and \(\mathcal F(G)\)
for the bounded faces.  The outer boundary is a Hamilton cycle
\cite[p.~436]{chartrand-harary}, so every vertex of \(G\) meets
exactly two outer edges.  Construct \(X_{m,G}\) as follows.
\begin{itemize}
\item \textbf{Weak dual.} Let \(v_\infty\) be the outer-face vertex
of \(G^*\).  Label each dual edge \(e^*\) by \(e\) and orient it
from the face on the left of \(e\) to the face on its right.  The
weak dual \(T_G:=G^*-v_\infty\) is a tree on \(\mathcal F(G)\)
\cite[Theorem~1]{fleischner-geller-harary}.
\item \textbf{Split weak dual.} For each \(e\in E_\partial(G)\),
attach a new leaf \(x_e\) to the bounded face incident with \(e\),
using an edge labelled by \(e\) with the inherited dual orientation.
The resulting tree \(\widehat T_G\) has edges labelled by \(E(G)\).
\item \textbf{Layer graph.} Take \(m\) copies of \(\widehat T_G\),
indexed by \([m]\), and identify the copies of each \(x_e\).
Call the resulting graph \(X_{m,G}\); label the copy of an edge \(e\)
in layer \(i\) by \((i,e)\), retaining its orientation.  Then
\begin{equation}\label{eq:vertex-edge}
V(X_{m,G})=([m]\times\mathcal F(G))\sqcup\{x_e:e\in E_\partial(G)\},\qquad E(X_{m,G})=[m]\times E(G).
\end{equation}
Every layer is connected, and the vertices \(x_e\) are shared by
all layers, so \(X_{m,G}\) is connected.
\end{itemize}
Figure~\ref{fig:cographic-construction} illustrates the construction for a square with a diagonal and \(C_3\).

\begin{figure}[H]
\centering
\begin{tikzpicture}[scale=.82,line width=.55pt,line cap=round,line join=round]
  \fill[blue!10] (0,0)--(1.5,0)--(1.5,1.5)--cycle;
  \fill[orange!16] (0,0)--(1.5,1.5)--(0,1.5)--cycle;
  \node[graphvertex,label=below left:{\scriptsize \(1\)}] (v0) at (0,0) {};
  \node[graphvertex,label=below right:{\scriptsize \(2\)}] (v1) at (1.5,0) {};
  \node[graphvertex,label=above right:{\scriptsize \(3\)}] (v2) at (1.5,1.5) {};
  \node[graphvertex,label=above left:{\scriptsize \(4\)}] (v3) at (0,1.5) {};
  \draw (v0.center)--(v1.center)--(v2.center)--(v3.center)--(v0.center)--(v2.center);
  \node[graphlabel] at (.98,.48) {\(F_1\)};
  \node[graphlabel] at (.48,1.02) {\(F_2\)};
  \node[graphlabel] at (.75,-.38) {\(G\)};

  \draw[->] (2.05,.75)--(2.95,.75);

  \node[facevertex,fill=blue!10] (t1) at (3.55,.75) {\(F_1\)};
  \node[facevertex,fill=orange!16] (t2) at (4.75,.75) {\(F_2\)};
  \draw (t1.east)--node[above,graphlabel]{\(13\)}(t2.west);
  \node[graphlabel] at (4.15,-.38) {\(T_G\)};

  \draw[->] (5.35,.75)--(6.15,.75);

  \node[leafvertex,label=left:{\scriptsize \(x_{12}\)}] (a1) at (6.90,1.45) {};
  \node[leafvertex,label=left:{\scriptsize \(x_{23}\)}] (a2) at (6.90,.05) {};
  \node[facevertex,fill=blue!10] (f1) at (8.00,.75) {\(F_1\)};
  \node[facevertex,fill=orange!16] (f2) at (9.30,.75) {\(F_2\)};
  \node[leafvertex,label=right:{\scriptsize \(x_{34}\)}] (a3) at (10.40,1.45) {};
  \node[leafvertex,label=right:{\scriptsize \(x_{41}\)}] (a4) at (10.40,.05) {};
  \draw (a1.center)--(f1.west) (a2.center)--(f1.west)
        (f1.east)--(f2.west)
        (f2.east)--(a3.center) (f2.east)--(a4.center);
  \node[graphlabel] at (8.65,-.38) {\(\widehat T_G\)};

  \draw[->] (11.20,.75)--(12.00,.75);

  \node[leafvertex,label=left:{\scriptsize \(x_{12}\)}] (b1) at (12.65,2.35) {};
  \node[leafvertex,label=left:{\scriptsize \(x_{23}\)}] (b2) at (12.65,-.85) {};
  \node[leafvertex,label=right:{\scriptsize \(x_{34}\)}] (b3) at (17.20,2.35) {};
  \node[leafvertex,label=right:{\scriptsize \(x_{41}\)}] (b4) at (17.20,-.85) {};
  \foreach \i/\y in {1/1.85,2/.75,3/-.35}{
    \node[facevertex,fill=blue!10] (g\i) at (14.05,\y) {\(\i,F_1\)};
    \node[facevertex,fill=orange!16] (h\i) at (15.75,\y) {\(\i,F_2\)};
    \draw (b1.center)--(g\i.west) (b2.center)--(g\i.west)
          (g\i.east)--(h\i.west)
          (h\i.east)--(b3.center) (h\i.east)--(b4.center);
  }
  \node[graphlabel] at (14.90,-1.25) {\(X_{3,G}\)};
\end{tikzpicture}
\caption{The construction
\(G\longmapsto T_G\longmapsto\widehat T_G\longmapsto X_{3,G}\) for a
square with one diagonal.}
\label{fig:cographic-construction}
\end{figure}

For two cycle factors, a cographic realization in characteristic zero
is recorded in \cite[Section~5.2]{hidaka-itoh}.  The following
proposition treats a cycle paired with a \(2\)-connected outerplanar
graph over an arbitrary field.
\begin{proposition}
\label{prop:outerplanar-cographic}
Let \(m\ge3\), \(\F\) be a field, and \(G\) be a \(2\)-connected outerplanar simple graph.  Then the graph \(X_{m,G}\) constructed above satisfies
\[
M_{\F}(C_m,G)\cong M^*(X_{m,G}).
\]
Moreover, if \(G=C_n\), where \(n\ge3\), then
\begin{equation}\label{eq:two-cycle-identity}
 M_{\F}(C_m,C_n)\cong M^*(K_{m,n}).
\end{equation}
\end{proposition}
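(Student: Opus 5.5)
The plan is to identify the labelled cycle space of \(X_{m,G}\) with \(B_{\F}(C_m)\otimes B_{\F}(G)\) inside \(\F^{[m]\times E(G)}\), using the identification \eqref{eq:vertex-edge} of edge sets. Then I apply \eqref{eq:cycle-space-dual-representation} to a full row rank matrix whose row space is this tensor subspace. By the discussion preceding the construction, any such matrix represents \(M_{\F}(C_m,G)\), which gives \(M_{\F}(C_m,G)\cong M^*(X_{m,G})\). So the proof splits into two steps: a dimension count, and one inclusion.

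\emph{Dimension count.} Write \(n=|V(G)|\). Then \(\dim B_{\F}(C_m)\otimes B_{\F}(G)=(m-1)(n-1)\). Since \(X_{m,G}\) is connected, \(\dim Z_{\F}(X_{m,G})=|E(X_{m,G})|-|V(X_{m,G})|+1\). By \eqref{eq:vertex-edge} this equals \(m|E(G)|-m|\mathcal F(G)|-|E_\partial(G)|+1\). The outer boundary is a Hamilton cycle, so \(|E_\partial(G)|=n\). Euler's formula for the plane graph \(G\), with \(|\mathcal F(G)|+1\) faces, gives \(|E(G)|-|\mathcal F(G)|=n-1\). Hence \(\dim Z_{\F}(X_{m,G})=m(n-1)-n+1=(m-1)(n-1)\), and it suffices to prove \(B_{\F}(C_m)\otimes B_{\F}(G)\subseteq Z_{\F}(X_{m,G})\).

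\emph{Inclusion.} The cut space of the cyclically oriented \(C_m\) is spanned by the differences \(\boldsymbol e_i-\boldsymbol e_j\) of edge indicator vectors. So \(B_{\F}(C_m)\otimes B_{\F}(G)\) is spanned by the vectors \((\boldsymbol e_i-\boldsymbol e_j)\otimes\boldsymbol b_G(v)\) for \(i,j\in[m]\) and \(v\in V(G)\). Fix \(v\). The set \(\partial_G(v)\) is a bond of the \(2\)-connected graph \(G\), so its dual edges form a cycle of \(G^*\). Because \(v\) lies on the outer face, this cycle passes through \(v_\infty\) exactly via the duals of the two outer edges \(e_1,e_2\) at \(v\). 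Deleting \(v_\infty\) and reattaching those two edges to the new leaves \(x_{e_1},x_{e_2}\) shows that \(\partial_G(v)\) is the edge set of a path \(P_v\) in \(\widehat T_G\) from \(x_{e_1}\) to \(x_{e_2}\).

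The step I expect to be the main obstacle is the sign check. Traversed from \(x_{e_1}\) to \(x_{e_2}\), the signed edge vector of \(P_v\) should equal \(\pm\boldsymbol b_G(v)\), with one global sign depending only on \(v\). The idea is that the dual edges go around \(v\) in a fixed rotational direction. Each edge \(e\) is oriented left-face-to-right-face, and as one circulates around \(v\) the dual edge \(e^*\) is traversed forwards exactly when \(e\) points away from \(v\), or exactly when it points towards \(v\), uniformly for all \(e\). I would verify this with a local picture at \(v\), checking separately the two pendant edges at \(x_{e_1},x_{e_2}\).

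Granting the sign check, the copy of \(P_v\) in layer \(i\), followed by the reverse of its copy in layer \(j\), is a closed walk in \(X_{m,G}\). This is because both copies have the same shared endpoints \(x_{e_1}\) and \(x_{e_2}\). Its signed edge vector is \(\pm(\boldsymbol e_i-\boldsymbol e_j)\otimes\boldsymbol b_G(v)\), which therefore lies in \(Z_{\F}(X_{m,G})\), proving the inclusion.

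For \(G=C_n\) there is a single bounded face. So \(T_G\) is one vertex, and \(\widehat T_G\) is a star with \(n\) leaves \(x_e\). Gluing \(m\) copies of this star along the leaves gives the bipartite graph on \(m\) centres and \(n\) leaves in which every centre is adjacent to every leaf, that is, \(K_{m,n}\). This yields \eqref{eq:two-cycle-identity}.
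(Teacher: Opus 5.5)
Your proposal is correct and follows essentially the same route as the paper: the same dimension count via Euler's formula and the Hamiltonian outer boundary, the same paths \(P_v\) obtained by splitting the dual bond cycle \(C_v^*\) at \(v_\infty\), the same cycles built from two layer copies of \(P_v\), and the same star-gluing that gives \(K_{m,n}\). The differences are cosmetic. You take all vectors \((\boldsymbol e_i-\boldsymbol e_j)\otimes\boldsymbol b_G(v)\) as a spanning set, whereas the paper uses exactly the rows \((\boldsymbol e_i-\boldsymbol e_m)\otimes\boldsymbol b_G(v)\), \(v\ne v_0\), of \(\Delta_m\otimes A_G\). The paper also settles your sign check: it traverses \(C_v^*\) with the dual face of \(v\) on the right, which agrees with the dual-edge orientation exactly at edges directed away from \(v\), so the signed vector of \(P_v\) is exactly \(\boldsymbol b_G(v)\).
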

\begin{proof}
Set \(A_{C_m}=\Delta_m\), whose \(i\)-th row is
\(\boldsymbol e_i^{(m)}-\boldsymbol e_m^{(m)}\), and let \(A_G\)
be the reduced incidence matrix obtained by deleting the row at
\(v_0\in V(G)\).  These are full row rank representations of the
factors; the rows of \(A_G\) are
\(\boldsymbol b_G(v)\) for \(v\ne v_0\).

For each such \(v\), the graph \(G-v\) is connected, so
\(\partial_G(v)\) is a bond.  Its dual edges form a cycle
\(C_v^*\) of \(G^*\).  The two outer edges incident with \(v\)
correspond to the two edges of \(C_v^*\) incident with
\(v_\infty\).  Splitting \(v_\infty\) therefore turns \(C_v^*\)
into a path \(P_v\) between the corresponding leaves of
\(\widehat T_G\).

Traverse \(C_v^*\) with the face of \(G^*\) corresponding to
\(v\) on the right, and give \(P_v\) the induced orientation.
At an edge directed away from \(v\), this traversal agrees with
the dual-edge orientation; at an edge directed towards \(v\),
it is opposite.  Thus the signed edge vector of \(P_v\) is
\(\boldsymbol b_G(v)\).  Let \(P_v^j\) be its copy in layer
\(j\in[m]\).  For \(i\in[m-1]\), the paths \(P_v^i\) and
\(P_v^m\) have the same endpoints and disjoint interiors, so
their union is a cycle \(\Gamma_{i,v}\).  Traverse \(P_v^i\)
forward and \(P_v^m\) backward.  The resulting cycle vector is
\[
 \boldsymbol{z}_{\Gamma_{i,v}}=(\boldsymbol{e}_i-\boldsymbol{e}_m) \otimes\boldsymbol b_G(v).
\]
Figure~\ref{fig:dual-path-cycle} illustrates the construction of these cycle vectors.
\begin{figure}[!ht]
\centering
\begin{tikzpicture}[
  x=.64cm,y=.75cm,
  line cap=round,
  line join=round,
  line width=.60pt,
  >=Stealth,
  localface/.style={circle,draw=black,fill=blue!9,minimum size=7.5mm,inner sep=1pt,font=\scriptsize},
  localouter/.style={circle,draw=black,fill=black!7,minimum size=8.5mm,inner sep=1pt,font=\scriptsize},
  localleaf/.style={circle,fill=black,inner sep=1.35pt},
  ilayer/.style={circle,draw=black,fill=blue!11,minimum size=10.5mm,inner sep=0pt,font=\scriptsize},
  mlayer/.style={circle,draw=black,fill=orange!17,minimum size=10.5mm,inner sep=0pt,font=\scriptsize},
  locallab/.style={font=\scriptsize,inner sep=1pt},
  localtitle/.style={font=\scriptsize\bfseries},
  patharrow/.style={
    postaction={decorate},
    decoration={markings,mark=at position .55 with
      {\arrow{Stealth[length=1.8mm,width=1.2mm]}}}
  }
]
  \node[localtitle,anchor=center] at (4.10,1.55)
    {The dual cycle \(C_v^*\)};
  \node[localouter] (vinf) at (0,0) {\(v_\infty\)};
  \node[localface] (c1) at (2.0,.75) {\(F_1\)};
  \node[localface] (c2) at (4.0,.75) {\(F_2\)};
  \node[localface] (cdm2) at (6.55,.75) {\(F_{d-2}\)};
  \node[localface] (cdm1) at (8.65,.75) {\(F_{d-1}\)};
  \draw (vinf.north east)--node[locallab,above left=2pt]{\(e_1^*\)}(c1.west);
  \draw (c1.east)--node[locallab,above=2pt]{\(e_2^*\)}(c2.west);
  \coordinate (cLeft) at ($(c2.east)+(.34,0)$);
  \coordinate (cRight) at ($(cdm2.west)+(-.34,0)$);
  \draw (c2.east)--(cLeft);
  \foreach \t in {.25,.50,.75}
    \fill ($(cLeft)!\t!(cRight)$) circle[radius=.65pt];
  \draw (cRight)--(cdm2.west);
  \draw (cdm2.east)--node[locallab,above=2pt]{\(e_{d-1}^*\)}(cdm1.west);
  \draw (cdm1.south west) to[bend left=23]
        node[locallab,below=6pt]{\(e_d^*\)}(vinf.south east);

  \draw[-{Stealth},line width=.65pt] (10.0,.35)--(11.2,.35)
        node[midway,above=5pt,locallab]{split \(v_\infty\)};

  \node[localtitle,anchor=center] at (16.73,1.55)
    {Splitting \(v_\infty\) produces the oriented path \(P_v\)};
  \node[localleaf] (x1) at (12.1,.35) {};
  \node[localface] (p1) at (13.75,.35) {\(F_1\)};
  \node[localface] (p2) at (15.45,.35) {\(F_2\)};
  \node[localface] (pdm2) at (17.85,.35) {\(F_{d-2}\)};
  \node[localface] (pdm1) at (19.75,.35) {\(F_{d-1}\)};
  \node[localleaf] (xd) at (21.35,.35) {};
  \node[locallab,anchor=north east] at ($(x1)+(0,-.28)$) {\(x_{e_1}\)};
  \node[locallab,anchor=north west] at ($(xd)+(0,-.28)$) {\(x_{e_d}\)};
  \draw[patharrow] (x1.center)--node[locallab,above=4pt]{\(e_1^*\)}(p1.west);
  \draw[patharrow] (p1.east)--node[locallab,above=4pt]{\(e_2^*\)}(p2.west);
  \coordinate (pLeft) at ($(p2.east)+(.30,0)$);
  \coordinate (pRight) at ($(pdm2.west)+(-.30,0)$);
  \draw (p2.east)--(pLeft);
  \foreach \t in {.25,.50,.75}
    \fill ($(pLeft)!\t!(pRight)$) circle[radius=.65pt];
  \draw (pRight)--(pdm2.west);
  \draw[patharrow] (pdm2.east)--node[locallab,above=4pt]{\(e_{d-1}^*\)}(pdm1.west);
  \draw[patharrow] (pdm1.east)--node[locallab,above=4pt]{\(e_d^*\)}(xd.center);

  \draw[-{Stealth},line width=.65pt] (10.65,-1.25)--(10.65,-2.85)
        node[midway,right=3pt,locallab]{take layers \(i\) and \(m\)};

  \node[localtitle,anchor=center] at (10.68,-3.05)
    {The two layer copies form \(\Gamma_{i,v}\)};
  \node[localleaf,label=left:{\scriptsize \(x_{e_1}\)}] (y1) at (2.1,-5.35) {};
  \node[localleaf,label=right:{\scriptsize \(x_{e_d}\)}] (yd) at (19.25,-5.35) {};
  \node[ilayer] (i1) at (5.0,-4.35) {\(i,F_1\)};
  \node[ilayer] (i2) at (7.6,-4.35) {\(i,F_2\)};
  \node[ilayer] (idm2) at (13.65,-4.35) {\(i,F_{d-2}\)};
  \node[ilayer] (idm1) at (16.35,-4.35) {\(i,F_{d-1}\)};
  \node[mlayer] (m1) at (5.0,-6.35) {\(m,F_1\)};
  \node[mlayer] (m2) at (7.6,-6.35) {\(m,F_2\)};
  \node[mlayer] (mdm2) at (13.65,-6.35) {\(m,F_{d-2}\)};
  \node[mlayer] (mdm1) at (16.35,-6.35) {\(m,F_{d-1}\)};

  \draw[patharrow] (y1.center)--(i1.west);
  \draw[patharrow] (i1.east)--(i2.west);
  \coordinate (iLeft) at ($(i2.east)+(1.55,0)$);
  \coordinate (iRight) at ($(idm2.west)+(-1.55,0)$);
  \draw[patharrow] (i2.east)--(iLeft);
  \foreach \t in {.25,.50,.75}
    \fill ($(iLeft)!\t!(iRight)$) circle[radius=.65pt];
  \draw[patharrow] (iRight)--(idm2.west);
  \draw[patharrow] (idm2.east)--(idm1.west);
  \draw[patharrow] (idm1.east)--(yd.center);
  \draw[patharrow] (yd.center)--(mdm1.east);
  \draw[patharrow] (mdm1.west)--(mdm2.east);
  \coordinate (mLeft) at ($(m2.east)+(1.55,0)$);
  \coordinate (mRight) at ($(mdm2.west)+(-1.55,0)$);
  \draw[patharrow] (mdm2.west)--(mRight);
  \foreach \t in {.25,.50,.75}
    \fill ($(mLeft)!\t!(mRight)$) circle[radius=.65pt];
  \draw[patharrow] (mLeft)--(m2.east);
  \draw[patharrow] (m2.west)--(m1.east);
  \draw[patharrow] (m1.west)--(y1.center);
  \node[locallab,text=blue!55!black] at (10.65,-4.88) {\(P_v^i\)};
  \node[locallab,text=orange!70!black] at (10.65,-5.82) {\(P_v^m\)};
  \node[font=\footnotesize] at (10.65,-7.88)
    {\(\displaystyle
      \Gamma_{i,v}=P_v^i\cup P_v^m,
      \qquad
      \boldsymbol{z}_{\Gamma_{i,v}}
      =(\boldsymbol{e}_i-\boldsymbol{e}_m)\otimes
        \boldsymbol b_G(v).\)};
\end{tikzpicture}
\caption{Splitting \(v_\infty\) turns \(C_v^*\) into \(P_v\);
the oppositely traversed copies in layers \(i\) and \(m\) form
\(\Gamma_{i,v}\).  Here \(d=\deg_G(v)\), and the incident edges
\(e_1,\ldots,e_d\) are indexed in the order in which their dual
edges are traversed along \(C_v^*\), starting at \(v_\infty\).
The edges \(e_1\) and \(e_d\) are the two outer edges.}
\label{fig:dual-path-cycle}
\end{figure}
The vectors \(\boldsymbol z_{\Gamma_{i,v}}\) are precisely the rows of \(A_{C_m}\otimes A_G\).  Consequently,
\begin{equation}\label{eq:outerplanar-rowspace-inclusion}
 \row(A_{C_m}\otimes A_G)\subseteq Z_{\F}(X_{m,G}).
\end{equation}

By Euler's formula and \eqref{eq:vertex-edge},
\[
 \begin{aligned}
 \dim Z_{\F}(X_{m,G})
 &=m|E(G)|-|V(X_{m,G})|+1\\
 &=(m-1)(|V(G)|-1)
 =\rank(A_{C_m}\otimes A_G).
 \end{aligned}
\]
Thus \eqref{eq:outerplanar-rowspace-inclusion} is an equality, and
\eqref{eq:cycle-space-dual-representation} gives
\(M_{\F}(C_m,G)\cong M^*(X_{m,G})\).

If \(G=C_n\), then \(\widehat T_G\cong K_{1,n}\).
Identifying equally labelled leaves in \(m\) copies gives
\(X_{m,C_n}\cong K_{m,n}\), proving
\eqref{eq:two-cycle-identity}.
\end{proof}

\subsection{A regular construction for series--parallel graphs}
For a triangle factor, regularity extends to series--parallel
networks.  Using their construction by edge subdivisions and
parallel-edge additions \cite{duffin-series-parallel,brylawski-series-parallel},
we construct an integral full row rank incidence representation
\(A\) for which
\(\Delta_3\otimes A\) is totally unimodular.  The next two lemmas
show that these operations preserve the required property.

\begin{lemma}\label{lem:triangle-parallel-tu}
Let \(G\) be a loopless graph and let \(G^+\) be obtained by adding
an edge \(f\) parallel to \(e\in E(G)\).  Suppose that
\(A=[\boldsymbol a_g:g\in E(G)]\) is an integral full row rank
incidence representation of \(M(G)\) over \(\QQ\).  If
\(\Delta_3\otimes A\) is totally unimodular, then
\[
 A^+=[A\mid\boldsymbol a_e],
\]
with its last column labelled by \(f\), is an integral full row rank
incidence representation of \(M(G^+)\) over \(\QQ\), and
\(\Delta_3\otimes A^+\) is totally unimodular.
\end{lemma}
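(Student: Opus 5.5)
We need to prove two things: that \(A^+\) is an integral full row rank incidence representation of \(M(G^+)\), and that \(\Delta_3\otimes A^+\) is totally unimodular.

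\textbf{The incidence-representation claim.} Adding a parallel edge \(f\) to \(e\) does not change the vertex set or the number of components, so \(r(G^+)=r(G)\). The oriented incidence matrix of \(G^+\) is obtained from that of \(G\) by duplicating the column of \(e\), if we orient \(f\) like \(e\). The row space of \(A\) equals \(\row(A_{o(G)})\), so \(A=XA_{o(G)}\) for some matrix \(X\) after discarding dependent rows. Hence appending the column \(\boldsymbol a_e\) gives a matrix \(A^+\) with \(\row(A^+)=\row(A_{o(G^+)})\). It has the same number of rows \(r(G^+)\) and the same rank, so it is a full row rank incidence representation. Integrality is clear.

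\textbf{The total unimodularity claim.} Reorder the columns of \(\Delta_3\otimes A^+\) (column order does not affect total unimodularity). Since \(\Delta_3\otimes A^+\) has columns \(\boldsymbol d_i\otimes\boldsymbol a_g\) for \(g\in E(G^+)\) and \(i\in[3]\), and \(\boldsymbol a_f=\boldsymbol a_e\), we get
\[
\Delta_3\otimes A^+ \sim \bigl[\,\Delta_3\otimes A \bigm| \boldsymbol d_1\otimes\boldsymbol a_e \bigm| \boldsymbol d_2\otimes\boldsymbol a_e \bigm| \boldsymbol d_3\otimes\boldsymbol a_e\,\bigr].
\]
Here \(\boldsymbol d_i\) is the \(i\)-th column of \(\Delta_3\). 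Each appended column is an exact copy of the column \(\boldsymbol d_i\otimes\boldsymbol a_e\) already present in \(\Delta_3\otimes A\).

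Total unimodularity is preserved under duplicating columns. Take any square submatrix. If it uses two equal columns, its determinant is \(0\). Otherwise, replace each duplicate by its original; this yields a square submatrix of \(\Delta_3\otimes A\) up to a column permutation, so its determinant lies in \(\{0,\pm1\}\).

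There is no real obstacle here. The only care needed is bookkeeping: identifying the Kronecker columns indexed by \(\{i\}\times\{f\}\) with those indexed by \(\{i\}\times\{e\}\), and noting that full row rank and the row-space condition are unaffected.
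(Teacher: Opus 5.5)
Your proposal is correct and follows essentially the same route as the paper. For the row space, the paper writes \(A=XR\) with \(R\) a reduced incidence matrix and \(X\) nonsingular, while you multiply \(A_{o(G)}\) and then use the rank count \(\rank(A^+)=r(G)=r(G^+)\) to get the reverse inclusion. For total unimodularity, both arguments use that the three new columns of \(\Delta_3\otimes A^+\) duplicate the columns \((j,e)\) of \(\Delta_3\otimes A\).
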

\begin{proof}
Choose a reduced incidence matrix \(R\) with
\(\row(R)=\row(A)\), and orient \(f\) in the same direction as
\(e\).  Writing \(A=XR\) with \(X\) nonsingular gives
\(A^+=X[R\mid\boldsymbol c_e]\), where
\(\boldsymbol c_e\) is the \(e\)-column of \(R\).
The matrix \([R\mid\boldsymbol c_e]\) is a reduced incidence
matrix of \(G^+\), so \(A^+\) has the required row space and rank.
The new columns of \(\Delta_3\otimes A^+\) duplicate the columns
of \(\Delta_3\otimes A\) labelled by \((j,e)\), \(j\in[3]\).
Each square minor is therefore zero or, up to sign, a square minor
of \(\Delta_3\otimes A\).  This proves total unimodularity.
\end{proof}

For edge subdivisions, we use the Ghouila--Houri characterization
\cite{ghouila-houri}; see also
\cite[Corollary~4.7]{conforti-integer-programming}.  A matrix
\(K=(k_{ij})\in\{0,1,-1\}^{p\times q}\) is totally unimodular if
and only if, for every \(S\subseteq[p]\), there exist signs
\(\varepsilon_i\in\{\pm1\}\), \(i\in S\), such that
\begin{equation}\label{eq:ghouila-houri}
  \sum_{i\in S}\varepsilon_i k_{ij}\in\{0,1,-1\}
  \qquad\text{for every }j\in[q].
\end{equation}
\begin{lemma}
\label{lem:triangle-subdivision-tu}
Let \(G\) be a loopless graph and let \(G^+\) be obtained by
subdividing \(e\in E(G)\) into \(e_1,e_2\).  Suppose that
\[
 A=[A_0\mid\boldsymbol a_e],\qquad
 A_0=A_{E(G)\setminus\{e\}},
\]
is an integral full row rank incidence representation of \(M(G)\)
over \(\QQ\).  If \(\Delta_3\otimes A\) is totally unimodular, then
\begin{equation}\label{eq:subdivision-incidence-representation}
 A^+:=
 \begin{bmatrix}
  A_0&\boldsymbol a_e&0\\
  0&1&-1
 \end{bmatrix},
\end{equation}
with the original labels on \(A_0\) and the last two columns
labelled by \(e_1,e_2\), is an integral full row rank incidence
representation of \(M(G^+)\) over \(\QQ\), and
\(\Delta_3\otimes A^+\) is totally unimodular.
\end{lemma}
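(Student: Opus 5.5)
Two things need proof: that \(A^+\) is an incidence representation of \(M(G^+)\) of the right form, and that \(\Delta_3\otimes A^+\) is totally unimodular.

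\textbf{Incidence representation.} Follow the proof of Lemma~\ref{lem:triangle-parallel-tu}.
\begin{itemize}
\item Write \(A=XR\), where \(R\) is a reduced incidence matrix with \(\row(R)=\row(A)\) and \(X\) is nonsingular and rational.
\item Let \(e\) be directed from \(u\) to \(v\). Subdivide through a new vertex \(w\), orienting \(e_1\) as \(u\to w\) and \(e_2\) as \(w\to v\).
\item A reduced incidence matrix of \(G^+\) keeps the old rows and the new row of \(w\). In the old rows, \(e_1\) has the column \(\boldsymbol c_e\) (its \(u\)-entry, restricted to kept rows) and \(e_2\) has \(-\boldsymbol e_u\) restricted; the simpler choice is to view the old rows' \(e_2\) entries as carrying the \(v\)-part.
\item Cleaner route: with row space \(\row(R)\oplus\) the \(w\)-row, the rows of \(R\) extended by putting \(\boldsymbol c_e\) on \(e_2\) and \(0\) on \(e_1\) differ from the true incidence rows by multiples of the \(w\)-row \((0,1,-1)\), up to sign conventions.
\item Hence \(\row(A^+)\), with \(A^+\) formed from \(A\) by block-augmenting, equals the cut space of \(G^+\). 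Its rank is \(r(G)+1=r(G^+)\), so it has full row rank.
\end{itemize}
Equivalently, this is the standard series-extension matrix rule: the new element \(e_2\) is in series with \(e_1\), and \(e_1\) plays the role of \(e\).

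\textbf{Total unimodularity.} Every entry of \(\Delta_3\otimes A^+\) lies in \(\{0,\pm1\}\), because \(\Delta_3\otimes A\) is TU. Reorder the rows as \(R_j\), the rows of \(\Delta_3\) tensored with the rows of \(A\), followed by two new rows \(s_1,s_2\) (\(\Delta_3\) row \(i\) tensored with \((0,1,-1)\)). The new columns are:
\begin{itemize}
\item \((j,e_1)\), which is the old column \((j,e)\) extended by \(\delta_j\otimes 1\) in the new rows, where \(\delta_j\) is column \(j\) of \(\Delta_3\), i.e.\ \(\boldsymbol e_1\), \(\boldsymbol e_2\) or \(-\boldsymbol 1_2\);
\item \((j,e_2)\), which is zero in the old rows and equals \(-\delta_j\) in the new rows.
\end{itemize}

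Apply Ghouila--Houri to a row set \(S=S_0\cup S_1\), with \(S_0\) old and \(S_1\subseteq\{s_1,s_2\}\). Take signs \(\varepsilon\) on \(S_0\) valid for \(\Delta_3\otimes A\). Then each old-type column sum is in \(\{0,\pm1\}\), and so is the \(e\)-part of each \((j,e_1)\) column; call it \(\sigma_j\in\{0,\pm1\}\). We must choose signs \(\eta_1,\eta_2\) for \(s_1,s_2\in S_1\) such that, for \(j=1,2,3\),
\[
\sigma_j+(\eta\cdot\delta_j)\in\{0,\pm1\}\quad\text{and}\quad \eta\cdot\delta_j\in\{0,\pm1\}.
\]
The second condition forces \(\eta_1=\eta_2\) when both rows are present. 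The first is a finite case check depending on the pattern \((\sigma_1,\sigma_2,\sigma_3)\).

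\textbf{Main obstacle.} The difficulty is that the constraint on \(\sigma\) is not automatic. Some \(\sigma\) patterns, such as \(\sigma_1=\sigma_2=1\) with both new rows used, may leave no valid \(\eta\). My plan to handle this is:
\begin{itemize}
\item Exploit the freedom to negate all of \(\varepsilon\) globally.
\item Apply Ghouila--Houri to \(\Delta_3\otimes A\) with an enlarged row set, namely \(S_0\) together with appropriate rows associated with the \(e\)-column, to obtain extra constraints linking \(\sigma_1,\sigma_2,\sigma_3\). Since the \((j,e)\) columns are \(\delta_j\otimes\boldsymbol a_e\), we have \(\sigma_j=\langle\delta_j\otimes\boldsymbol a_e,\varepsilon\rangle\) restricted to \(S_0\). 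Writing \(\tau_i\) for the partial sums over block row \(i\), this gives \(\sigma=(\tau_1,\tau_2,-\tau_1-\tau_2)\), all in \(\{0,\pm1\}\).
\item With this structure, check the possibilities \((\tau_1,\tau_2)\in\{(0,0),(\pm1,0),(0,\pm1),(1,-1),(-1,1)\}\). In each case choose \(\eta_i=-\tau_i\) when \(\tau_i\ne0\), and otherwise pick \(\eta_i\) to match the other sign, verifying all three coordinates.
\end{itemize}

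If \(\Delta_3\otimes A\) has been set up so that these \(\tau\) patterns are the only ones, the case check closes the argument. If some pattern resists, the fallback is to re-choose \(\varepsilon\) on \(S_0\) using Ghouila--Houri on \(\Delta_3\otimes A\) augmented by the \((j,e)\) columns repeated, which exploits the fact that a TU matrix stays TU after appending \(\pm\) unit and duplicate columns.
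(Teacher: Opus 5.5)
Your route matches the paper's proof. You write \(A=XR\) for the representation part, then apply Ghouila--Houri, inheriting the signing of \(S_0\) from \(\Delta_3\otimes A\) and choosing signs on the two new rows. However, the decisive sign step is wrong. The new rows are the rows of \(\Delta_3\) tensored with \((0,\ldots,0,1,-1)\). In column \((3,e_2)\) both therefore have entry \((-1)(-1)=1\), and their signed sum there is \(-\eta\cdot\delta_3=\eta_1+\eta_2\). This lies in \(\{0,\pm1\}\) only when \(\eta_2=-\eta_1\), so your assertion that it forces \(\eta_1=\eta_2\) is backwards. Your rule ``\(\eta_i=-\tau_i\) when \(\tau_i\ne0\), otherwise match the other sign'' then fails when both new rows lie in \(S\):
\begin{itemize}
\item for \((\tau_1,\tau_2)=(0,0)\) or \((1,0)\) it gives \(\eta_1=\eta_2\), and hence a sum \(\pm2\) in column \((3,e_2)\);
\item for \((\tau_1,\tau_2)=(1,-1)\) it gives \(\eta=(-1,1)\), contradicting your own constraint.
\end{itemize}
As written, the Ghouila--Houri condition is not verified.

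The repair is exactly the paper's argument, and it also removes your ``main obstacle''. You derived \(\sigma=(\tau_1,\tau_2,-\tau_1-\tau_2)\in\{0,\pm1\}^3\), which already shows that \(\sigma\) is zero or a permutation of \((1,-1,0)\). So \(\sigma_1=\sigma_2=1\) cannot occur, and no re-signing fallback is needed. Give the new rows opposite signs when both are present. Then \(\boldsymbol u=(\eta\cdot\delta_j)_{j\in[3]}\) is also zero or a permutation of \((1,-1,0)\). If both vectors are nonzero, their supports are two-element subsets of \([3]\) and therefore meet. Negate \(\eta\) if necessary so that \(\sigma\) and \(\boldsymbol u\) are opposite at a common coordinate. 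Two distinct such supports share exactly one point, and both vectors have coordinate sum zero, so \(\sigma+\boldsymbol u\in\{0,\pm1\}^3\). The column sums \(\sigma_j+u_j\) at \((j,e_1)\) and \(-u_j\) at \((j,e_2)\) are then admissible.

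Your representation paragraph also needs tidying. In the old rows \(e_2\) carries \(-\boldsymbol e_v\), not \(-\boldsymbol e_u\). Your ``cleaner route'' puts \(\boldsymbol c_e\) on \(e_2\), whereas \(A^+\) has \(\boldsymbol a_e\) on \(e_1\). The row-space claim does hold for the placement in \(A^+\). The paper makes it exact by writing \(R=[R_0\mid\boldsymbol c_e]\) with the \(v\)-row deleted. Then \(\widetilde R^+=\begin{bmatrix}R_0&\boldsymbol c_e&0\\0&-1&1\end{bmatrix}\) is a reduced incidence matrix of \(G^+\), and \(A^+=\operatorname{diag}(X,-1)\widetilde R^+\).
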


\begin{proof}
Write \(e=uv\), with \(e_1=uw\) and \(e_2=wv\).  Choose an
orientation \(o(G)\) with \(\row(A)=\row(A_{o(G)})\) and \(e\)
directed from \(u\) to \(v\), reversing all edge directions if
necessary.  This reversal negates the incidence matrix and preserves
its row space.  Choose a reduced incidence matrix
\(R=[R_0\mid\boldsymbol c_e]\) by deleting the row at \(v\)
and one vertex row from each other component.  Then \(A=XR\)
for a nonsingular matrix \(X\in\QQ^{r\times r}\), where
\(r=r(G)\).  Orient the subdivided path as \(u\to w\to v\),
and delete the same old vertex rows.  The resulting reduced
incidence matrix satisfies
\[
 \widetilde R^+=
 \begin{bmatrix}
  R_0&\boldsymbol c_e&0\\
  0&-1&1
 \end{bmatrix},
 \qquad
 A^+=\operatorname{diag}(X,-1)\widetilde R^+.
\]
Thus \(A^+\) has the required row space and full row rank.

For \(g\in E(G)\), let \(\boldsymbol a_g\) be the \(g\)-column of
\(A\).  Write \(\Delta_3=[\boldsymbol d_1\mid\boldsymbol d_2\mid\boldsymbol d_3]\),
and set \(K=\Delta_3\otimes A\) and
\(K^+=\Delta_3\otimes A^+\).  Move the two new rows of \(K^+\)
to the bottom.  Its columns then have the following forms, for
\(j\in[3]\):
\begin{equation}\label{eq:subdivision-tensor-block-matrix}
 \begin{array}{c@{\qquad}c@{\qquad}c}
  (j,g),\ g\ne e &(j,e_1)&(j,e_2)\\[2mm]
  \displaystyle\binom{\boldsymbol d_j\otimes\boldsymbol a_g}{\boldsymbol 0_2}
  &\displaystyle\binom{\boldsymbol d_j\otimes\boldsymbol a_e}{\boldsymbol d_j}
  &\displaystyle\binom{\boldsymbol 0_{2r}}{-\boldsymbol d_j}
 \end{array}
\end{equation}
Since \(A\) is a submatrix of the TU matrix \(K\), this display
also shows that every entry of \(K^+\) lies in \(\{0,1,-1\}\).

Let \(S\) be a set of rows of \(K^+\), with
\(S_0=S\cap[2r]\) and \(S_1=S\setminus S_0\).
Choose a Ghouila--Houri signing of \(S_0\) in \(K\), and let
\(s_j\) be the signed sum in the column
\(\boldsymbol d_j\otimes\boldsymbol a_e\).
Since \(s_j\in\{0,1,-1\}\) and
\(\boldsymbol d_1+\boldsymbol d_2+\boldsymbol d_3=0\),
\(\boldsymbol s=(s_1,s_2,s_3)\) is zero or a permutation of
\((1,-1,0)\).

Sign the rows in \(S_1\), using opposite signs when both are
present.  Their signed-sum vector \(\boldsymbol u\) in
\(\Delta_3\) is likewise zero or a permutation of \((1,-1,0)\).
Reversing all these signs if necessary, we may arrange that
\(\boldsymbol s+\boldsymbol u\in\{0,1,-1\}^3\).
Indeed, if both vectors are nonzero, their supports intersect.
Choose opposite signs at one common coordinate: equal supports
then give \(\boldsymbol u=-\boldsymbol s\), while unequal
supports share only that coordinate.  If either vector is zero,
no adjustment is needed.

By \eqref{eq:subdivision-tensor-block-matrix}, the signed sums
in columns \((j,e_1)\) and \((j,e_2)\) are \(s_j+u_j\) and
\(-u_j\), respectively; all other column sums agree with those
in \(K\).  Every sum therefore lies in \(\{0,1,-1\}\), so
\eqref{eq:ghouila-houri} proves that \(K^+\) is totally unimodular.
\end{proof}

The definition of a series--parallel network and the preceding
lemmas give the following regularity result.
\begin{proposition}
\label{prop:triangle-series-parallel}
Let \(G\) be a \(2\)-connected simple series--parallel network, and \(\F\) be an arbitrary field.  Then \(M_{\F}(C_3,G)\) is regular.
\end{proposition}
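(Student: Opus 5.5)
The plan is to induct along the construction of \(G\) as a series--parallel network, maintaining an integral full row rank incidence representation \(A\) of the current graph for which \(\Delta_3\otimes A\) is totally unimodular. By definition, \(G\) is obtained from the two-edge cycle \(C_2\) by a finite sequence of edge subdivisions and parallel-edge additions. Every graph along the way is loopless: subdivision of a nonloop edge and duplication of a nonloop edge create no loops. Hence Lemmas~\ref{lem:triangle-parallel-tu} and~\ref{lem:triangle-subdivision-tu} apply at each step, and they carry the TU property from one stage to the next. The intermediate graphs may have parallel edges, and that is harmless because both lemmas allow loopless multigraphs.

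For the base case, \(C_2\) has two vertices and two parallel edges. I take \(A=[1\mid 1]\), which is the reduced incidence matrix with both edges oriented the same way. Then
\[
 \Delta_3\otimes A=[\,1\ 1\ 0\ 0\ -1\ -1\,;\ 0\ 0\ 1\ 1\ -1\ -1\,].
\]
This is \(\Delta_3\) with duplicated columns, and \(\Delta_3\) is TU since it is a network (incidence-type) matrix. Any square minor of the product is therefore zero or, up to sign, a minor of \(\Delta_3\). Alternatively, one can start from \(C_3\) with \(A=\Delta_3\) and check \(\Delta_3\otimes\Delta_3\) directly. It is cleaner to reach \(C_3\) from \(C_2\) by one subdivision.

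Applying the two lemmas repeatedly yields an integral full row rank incidence representation \(A_G\) of \(M(G)\) over \(\QQ\) with \(K=\Delta_3\otimes A_G\) totally unimodular. Since \(\Delta_3\) and \(A_G\) represent \(M(C_3)\) and \(M(G)\) over \(\QQ\), we have \(M[K]=M_{\QQ}(C_3,G)\), and this matroid is regular because it has a TU real representation. It remains to pass to an arbitrary field \(\F\). A TU matrix keeps every minor in \(\{0,\pm1\}\) under the map \(\mathbb Z\to\F\), so \(K^{\F}\) has exactly the same vanishing pattern of minors as \(K\). Moreover \(K^{\F}=\Delta_3^{\F}\otimes A_G^{\F}\), and the factors \(\Delta_3^{\F}\) and \(A_G^{\F}\) are themselves minors of \(K\) up to sign: \(A_G\) is a submatrix of \(K\) (the rows and columns where \(\Delta_3\) has an entry \(\pm1\)), and \(\Delta_3\) is TU. So both factors are TU and still represent \(M(C_3)\) and \(M(G)\) over \(\F\). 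By Proposition~\ref{thm:basic-represented-product},
\[
 M_{\F}(C_3,G)=M[K^{\F}]=M[K],
\]
and this matroid is regular. Alternatively, this last step follows from Lemma~\ref{lem:prime-field-reduction}.

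The only genuinely delicate point is the induction bookkeeping. The step that needs the most care is checking that the hypothesis ``integral full row rank incidence representation'' is preserved with the same labels. The lemmas assert exactly this, so it suffices to check that the definition of series--parallel network allows the construction to proceed entirely through loopless graphs. It does, because one starts at \(C_2\), which is loopless and \(2\)-connected, and the operations never introduce loops. Simplicity and \(2\)-connectedness of the final \(G\) are not needed beyond guaranteeing that \(G\) is not the single-edge graph. The single-edge case would in any event be trivial, since \(M_{\F}(C_3,K_2)\cong M(C_3)\) by \eqref{eq:free-factor}.
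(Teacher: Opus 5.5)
Your proof is correct and is essentially the paper's argument. You use the same base matrix \([1\ 1]\) for \(C_2\), with \(\Delta_3\otimes[1\ 1]\) a column-duplication of \(\Delta_3\), and apply Lemmas~\ref{lem:triangle-parallel-tu} and~\ref{lem:triangle-subdivision-tu} along the construction sequence. You then use that \(K=\Delta_3\otimes A_G\) and its submatrix \(A_G\) are TU, so Proposition~\ref{thm:basic-represented-product} identifies \(M_{\F}(C_3,G)\) with \(M[K]\) over every field, exactly as the paper does.

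The only slip is your aside that the last step \emph{alternatively} follows from Lemma~\ref{lem:prime-field-reduction}. That lemma only compares fields of the same characteristic, so it covers characteristic zero but cannot pass from \(\QQ\) to \(\operatorname{GF}(p)\) (compare Remark~\ref{ex:regular-characteristic}). Your TU argument is what actually makes that transfer.
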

\begin{proof}
Choose a construction sequence \(G_0=C_2,G_1,\ldots,G_t=G\),
where each step is an edge subdivision or a parallel-edge addition.
Orient the two edges of \(C_2\) in the same direction and take
\(A_0=[1\ 1]\).  The matrix \(\Delta_3\otimes A_0\) is totally
unimodular, since it is obtained by duplicating each column of
\(\Delta_3\).

Applying Lemmas~\ref{lem:triangle-parallel-tu}
and~\ref{lem:triangle-subdivision-tu} at each step produces an
integral full row rank incidence representation \(A_t\) of
\(M(G)\) over \(\QQ\) such that
\(K_t=\Delta_3\otimes A_t\) is totally unimodular.
The matrix \(A_t\) is a submatrix of \(K_t\), so it is also
totally unimodular.  Every square minor of either matrix is
\(0\), \(1\), or \(-1\); consequently, its rank on every
column set is unchanged when its entries are interpreted over any
field.  Thus \(A_t\) represents \(M(G)\) over \(\F\), and
\(K_t\) represents the same regular matroid over every field.
Since \(K_t=\Delta_3\otimes A_t\),
Proposition~\ref{thm:basic-represented-product} identifies this
matroid with \(M_{\F}(C_3,G)\).
\end{proof}

\section{Classification theorems}
\label{sec:regular-matroids}
We first classify products of graphic factors using the reduction
and constructions of Section~\ref{sec:positive-constructions}.
This gives the \mainthm{} for binary factors over a common field.  We then derive
the factor-minor characterization and treat higher products,
loops, and parallel elements.

\subsection{Classification for graphic factors}\label{subsec:graphic-classification}
Let \(G,H\) be simple graphs with nonempty edge sets, and let
\(\F\) be a field.  If one factor is a forest, say \(G\), then
\eqref{eq:free-factor} gives
\[
M_{\F}(G,H)\cong M(H)^{\oplus |E(G)|}.
\]
Thus the product is graphic and regular.  Since \(|E(G)|\ge1\),
closure under direct sums and minors shows that it is cographic
if and only if \(M(H)\) is cographic, or equivalently, \(H\) is
planar by Fact~\ref{fact:whitney-planarity}.

It remains to consider graphs that both contain cycles.
By Theorem~\ref{thm:graphic-reduction}, regularity excludes an
\(M(K_4)\)-minor from either factor matroid.  We first combine
this restriction with the constructions for a cycle and a single
cyclic block.
\begin{theorem}
\label{thm:local-classification}
Let \(m\ge3\), \(\F\) be a field, and \(G\) be a \(2\)-connected simple graph.  Then \(M_{\F}(C_m,G)\) is regular if and only if
\[
 \begin{cases}
 G\text{ is a series--parallel network},&m=3,\\
 G\text{ is outerplanar},&m\ge4.
 \end{cases}
\]
Moreover, \(M_{\F}(C_m,G)\) is cographic if and only if \(G\) is outerplanar.
\end{theorem}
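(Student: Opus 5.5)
The proof has a sufficiency half and a necessity half, each assembled from results already in place.

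\emph{Sufficiency.} Suppose first that \(G\) is outerplanar. Since \(G\) is \(2\)-connected and simple, Proposition~\ref{prop:outerplanar-cographic} gives \(M_{\F}(C_m,G)\cong M^*(X_{m,G})\) for every \(m\ge3\). Hence the product is cographic, and therefore regular by Fact~\ref{fact:matroid-classes}; this covers every \(m\ge4\) and the cographic "if" direction. Now suppose \(m=3\) and \(G\) is a series--parallel network. Proposition~\ref{prop:triangle-series-parallel} applies directly and gives regularity.

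\emph{Necessity for regularity.} Assume \(M_{\F}(C_m,G)\) is regular.
\begin{enumerate}
\item \(M(G)\) has no \(M(K_4)\)-minor. Indeed, \(C_m\) has a \(C_3\)-minor. If \(M(G)\) had an \(M(K_4)\)-minor, Proposition~\ref{thm:factor-minors} would put \(M_{\F}(C_3,K_4)\) inside the product as a minor. That matroid is nonregular by Proposition~\ref{prop:specified-minors}\textup{(i)}, a contradiction. Since \(G\) is \(2\)-connected and simple, Fact~\ref{fact:series-parallel-minors} then makes \(G\) a series--parallel network. This settles \(m=3\).
\item For \(m\ge4\), \(G\) has no \(K_{2,3}\) graph minor. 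Here \(C_m\) contracts to \(C_4\). If \(G\) had \(K_{2,3}\) as a graph minor, Fact~\ref{fact:graph-minors} would give an \(M(K_{2,3})\)-minor of \(M(G)\). Proposition~\ref{thm:factor-minors} would then give an \(M_{\F}(C_4,K_{2,3})\)-minor of the product, which is nonregular by Proposition~\ref{prop:specified-minors}\textup{(i)}.
\item \(G\) has no \(K_4\) graph minor, by step 1 together with Fact~\ref{fact:graph-minors}.
\end{enumerate}
Steps 2 and 3 together with Fact~\ref{fact:outerplanar-minors} show that \(G\) is outerplanar whenever \(m\ge4\).

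\emph{Necessity for cographicity.} Assume the product is cographic. It is then regular, so \(G\) is series--parallel by step 1, and in particular has no \(K_4\) graph minor. Suppose \(G\) had a \(K_{2,3}\) graph minor. Since \(C_m\) has a \(C_3\)-minor, the product would contain \(M_{\F}(C_3,K_{2,3})\) as a minor. By Proposition~\ref{prop:specified-minors}\textup{(ii)} that matroid has an \(M(K_{3,3})\)-minor, and cographicity is closed under minors (Fact~\ref{fact:minor-direct-sums}), so this is a contradiction. Hence \(G\) is outerplanar by Fact~\ref{fact:outerplanar-minors}.

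The only delicate point is passing between graph minors and matroid minors of the factor. The direction needed, from a graph minor of \(G\) to a matroid minor of \(M(G)\), is exactly Fact~\ref{fact:graph-minors}. Matroid minors of \(C_m\) are immediate, since contracting edges of a cycle gives a shorter cycle. Everything else is a direct appeal to the earlier propositions, so the argument has no real obstacle beyond the already-deferred minor containments of Proposition~\ref{prop:specified-minors}.
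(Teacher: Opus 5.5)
Your proposal is correct and follows the paper's proof essentially step for step. It uses the same ingredients: Propositions~\ref{prop:outerplanar-cographic} and~\ref{prop:triangle-series-parallel} for sufficiency, and the factor-minor containments of Proposition~\ref{prop:specified-minors} together with Facts~\ref{fact:graph-minors}, \ref{fact:series-parallel-minors}, and~\ref{fact:outerplanar-minors} for necessity. The only differences are cosmetic: you inline the argument of Theorem~\ref{thm:graphic-reduction} via the \(C_3\)-minor of \(C_m\), and you handle cographic necessity uniformly in \(m\) through \(M_{\F}(C_3,K_{2,3})\) rather than first deferring the case \(m\ge4\) to the regular case.
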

\begin{proof}
If \(G\) is outerplanar, Proposition~\ref{prop:outerplanar-cographic} identifies
\(M_{\F}(C_m,G)\cong M^*(X_{m,G})\); the product is therefore cographic and,
by Fact~\ref{fact:matroid-classes}, regular.  If \(m=3\)
and \(G\) is series--parallel, regularity follows from
Proposition~\ref{prop:triangle-series-parallel}.  This proves sufficiency.

Conversely, suppose \(M_{\F}(C_m,G)\) is regular.
Theorem~\ref{thm:graphic-reduction} excludes an \(M(K_4)\)-minor
from \(M(G)\).  Thus \(G\) is series--parallel by
Fact~\ref{fact:series-parallel-minors} and has no \(K_4\)-minor by
Fact~\ref{fact:graph-minors}.  This proves necessity when \(m=3\).
Let \(m\ge4\), and suppose that \(G\) is not outerplanar.
Fact~\ref{fact:outerplanar-minors} then gives a \(K_{2,3}\)-minor.
Contracting \(m-4\) edges of \(C_m\) and applying
Proposition~\ref{thm:factor-minors} gives an
\(M_{\F}(C_4,K_{2,3})\)-minor of \(M_{\F}(C_m,G)\).
This contradicts Proposition~\ref{prop:specified-minors}\textup{(i)}
and the minor-closedness of the regular class.  Hence \(G\) is outerplanar.

For cographicity, suppose \(M_{\F}(C_m,G)\) is cographic.
It is regular by Fact~\ref{fact:matroid-classes}, so the case
\(m\ge4\) follows from the necessity just proved.  If \(m=3\)
and \(G\) is not outerplanar, the exclusion of \(K_4\) above
and Fact~\ref{fact:outerplanar-minors} give a \(K_{2,3}\)-minor.
Fact~\ref{fact:graph-minors} and
Proposition~\ref{thm:factor-minors} then give an
\(M_{\F}(C_3,K_{2,3})\)-minor of \(M_{\F}(C_3,G)\),
contradicting Proposition~\ref{prop:specified-minors}\textup{(ii)}
and the minor-closedness of the cographic class.  Thus \(G\) is outerplanar.
\end{proof}

For every field \(\F\), the product \(M_{\F}(C_3,K_{2,3})\) is
regular but not cographic by
Propositions~\ref{prop:triangle-series-parallel} and
\ref{prop:specified-minors}\textup{(ii)}.
Replacing \(C_3\) by \(C_4\) gives a nonregular product by
Proposition~\ref{prop:specified-minors}\textup{(i)}.

The next lemma is used to show that at least one factor graph is a cactus.
Its graph-theoretic assertion appears in \cite[proof of Theorem~5.2]{fife-oxley}, and its matroidal assertion follows
from Fact~\ref{fact:graph-minors}.
\begin{lemma}
\label{lem:diamond-minor-in-block}
Let \(G\) be a \(2\)-connected simple graph.  If \(G\) is not a cycle, then \(G\) has \(K_4-e\) as a graph minor, and \(M(G)\) has an
\(M(K_4-e)\)-minor.
\end{lemma}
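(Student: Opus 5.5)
The plan is to use an ear decomposition. Since \(G\) is \(2\)-connected, Whitney's theorem gives an ear decomposition \(G=C\cup P_1\cup\cdots\cup P_k\). Here \(C\) is a cycle, and each \(P_i\) is a path whose two distinct endpoints lie in the previous graph and whose interior is disjoint from it. Because \(G\) is not a cycle, \(k\ge1\).

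Consider the first ear \(P_1\), with endpoints \(x\ne y\) on \(C\). The two arcs of \(C\) between \(x\) and \(y\), together with \(P_1\), form a theta graph \(\Theta\subseteq G\): three internally disjoint \(x\)--\(y\) paths. At most one of these paths can be the single edge \(xy\), since \(G\) is simple. So at least two of the paths have length at least two.

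Next I reduce \(\Theta\) to \(K_4-e\). I would contract edges so that two of the paths have length exactly two and the third has length exactly one. The path that is already longest stays a path of length at least one; if every path has length at least two, I contract one path down to a single edge. Contracting internal edges of a path keeps the three paths internally disjoint, so the result is a theta graph with path lengths \(2,2,1\). That graph is \(K_4\) minus an edge: vertices \(x,y,a,b\) with edges \(xa,ay,xb,by,xy\), which is missing only \(ab\).

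This exhibits \(K_4-e\) as a minor of the subgraph \(\Theta\), and hence as a graph minor of \(G\), obtained by deleting \(E(G)\setminus E(\Theta)\), removing isolated vertices, and then contracting. Fact~\ref{fact:graph-minors} then gives an \(M(K_4-e)\)-minor of \(M(G)\).

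The only point needing care is the existence of the theta subgraph. If one prefers not to cite ear decompositions, the alternative is to take a longest cycle \(C\). Some edge or vertex of \(G\) lies outside \(C\) or is a chord. In the chord case, the chord together with \(C\) gives \(\Theta\) directly. Otherwise, \(2\)-connectivity and Menger's theorem give a \(C\)-path joining two distinct vertices of \(C\), which again yields \(\Theta\). This is the step I expect to be the main obstacle, though it is standard.
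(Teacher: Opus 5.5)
Your proof is correct, and it takes a different route from the paper only in that the paper gives no argument of its own for the graph-theoretic part. The paper cites the proof of Theorem~5.2 in Fife--Oxley for the existence of a \(K_4-e\) minor. It then derives the \(M(K_4-e)\)-minor from Fact~\ref{fact:graph-minors}, exactly as in your last step.

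Your theta-subgraph argument replaces that citation with a short self-contained proof:
\begin{enumerate}
\item An open ear produces three internally disjoint \(x\)--\(y\) paths. In your alternative, a chord or a \(C\)-path does the same; such a path exists for \emph{any} cycle \(C\) of a \(2\)-connected graph other than \(C\) itself, by the fan lemma, so ``longest'' is unnecessary.
\item Simplicity ensures that at most one of the three paths is the edge \(xy\).
\item Contracting to lengths \(2,2,1\) gives \(K_4-e\) without creating parallel edges.
\end{enumerate}
The citation buys only brevity; your version makes the lemma independent of the literature.

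One sentence should be tidied. The remark about ``the path that is already longest'' is garbled. What your construction actually needs is this: if one path is already the edge \(xy\), shorten the other two to length two; otherwise, additionally shorten one path to a single edge.
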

We shall use the standard block characterization of outerplanar graphs \cite[Theorem~11.8]{harary-graph-theory}:
\[
H \text{ is outerplanar}\quad\Longleftrightarrow\quad \text{every block of }H\text{ is outerplanar}.
\]
The local theorem and the block decomposition combine in the global classification.
\begin{theorem}\label{thm:global-classification}
Let \(G,H\) be simple graphs, each containing a cycle, and let
\(\F\) be a field.
\begin{enumerate}[label=\textup{(\roman*)},leftmargin=2.5em]
\item The matroid \(M_{\F}(G,H)\) is regular if and only if,
after possibly interchanging \(G\) and \(H\), one of the following
holds:
\begin{enumerate}[label=\textup{(\alph*)},leftmargin=2.5em]
\item \(G\) is a cactus and \(H\) is outerplanar;
\item every cyclic block of \(G\) is a triangle, and every cyclic
block of \(H\) is a series--parallel network.
\end{enumerate}
\item The matroid \(M_{\F}(G,H)\) is cographic if and only if,
after possibly interchanging \(G\) and \(H\), \(G\) is a cactus
and \(H\) is outerplanar.
\end{enumerate}
The product is not graphic.  It is binary if and only if
\(\operatorname{char}(\F)=2\) or it is regular.
\end{theorem}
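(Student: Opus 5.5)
The plan is to reduce everything to pairs of blocks via Proposition~\ref{prop:block-decomposition}. Write the cyclic blocks of \(G\) as \(G_1,\dots,G_s\) and those of \(H\) as \(H_1,\dots,H_t\), with \(s,t\ge1\). In the decomposition \(M_{\F}(G,H)=\bigoplus M_{\F}(G_i,H_j)\), a summand with a bridge block is, by \eqref{eq:free-factor}, the cycle matroid of the other block. That matroid is graphic and has no \(M(K_{3,3})\)- or \(M(K_5)\)-minor as soon as the other block is outerplanar or series--parallel. Membership in a class closed under minors and direct sums (Fact~\ref{fact:minor-direct-sums}) therefore holds exactly when it holds for every summand \(M_{\F}(G_i,H_j)\) with both blocks cyclic. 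Bridge summands must still be checked, but they are harmless under either condition (a) or (b), since both make \(H\) planar.

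\textbf{Sufficiency.} Under (a), each cyclic block \(G_i\) is a cycle \(C_m\) with \(m\ge3\). Each cyclic block \(H_j\) is a \(2\)-connected outerplanar graph, by the block characterization of outerplanarity. Theorem~\ref{thm:local-classification} then makes every cyclic summand cographic. Under (b), each \(G_i=C_3\) and each \(H_j\) is a series--parallel network, so each summand is regular by Theorem~\ref{thm:local-classification}.

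\textbf{Necessity.} Suppose the product is regular. Theorem~\ref{thm:graphic-reduction} and Fact~\ref{fact:series-parallel-minors} make every cyclic block of both graphs a series--parallel network. The key step, and the main obstacle, is forcing one factor to be a cactus. If neither is, each has a cyclic block that is not a cycle. Lemma~\ref{lem:diamond-minor-in-block} gives \(M(K_4-e)\)-minors in both factors, and Proposition~\ref{thm:factor-minors} together with Proposition~\ref{prop:specified-minors}(i) for \((K_4-e,K_4-e)\) contradicts regularity. So, after interchanging, \(G\) is a cactus.

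If some cycle block of \(G\) has length \(m\ge4\), then Theorem~\ref{thm:local-classification} applied to \(C_m\) and each cyclic block of \(H\) forces those blocks to be outerplanar. Hence \(H\) is outerplanar by the block characterization, which is (a). Otherwise every cyclic block of \(G\) is a triangle, which is (b). One subtlety needs care: in case (b) the roles might be reversed. This is handled by the phrase ``after possibly interchanging'', because we only need one ordering to satisfy (a) or (b).

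\textbf{Cographicity.} Necessity follows the same route. Cographic implies regular, so one factor, say \(G\), is a cactus by the argument above. The cographic clause of Theorem~\ref{thm:local-classification}, applied to a cycle block of \(G\) and each cyclic block of \(H\), makes all cyclic blocks of \(H\) outerplanar, so \(H\) is outerplanar. Sufficiency was already shown in case (a).

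\textbf{Remaining claims.} For nongraphicity, both graphs contain cycles, so both factors have \(U_{2,3}\)-minors. Proposition~\ref{thm:factor-minors} then gives a \(M_{\F}(C_3,C_3)\cong M^*(K_{3,3})\)-minor by \eqref{eq:two-cycle-identity}. This matroid is not graphic by Fact~\ref{fact:whitney-planarity}, and the graphic class is minor-closed. The binary statement follows from Fact~\ref{fact:matroid-classes} when \(\operatorname{char}(\F)\ne2\), since the product is then \(\F\)-representable, and from Lemma~\ref{lem:prime-field-reduction} in characteristic two.
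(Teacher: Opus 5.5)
Your proposal is correct and follows the paper's proof essentially step for step. You use the block decomposition, then Theorem~\ref{thm:graphic-reduction} together with the \((K_4-e,K_4-e)\) exclusion to force a cactus factor, then Theorem~\ref{thm:local-classification} blockwise in both directions, and finally the \(M^*(K_{3,3})\)-minor for nongraphicity, with only an expository wobble in your opening paragraph: you say class membership is decided by the cyclic--cyclic summands alone, then immediately (and correctly) note that the bridge summands must still be checked.
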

\begin{proof}
Suppose first that \(M_{\F}(G,H)\) is regular.
Theorem~\ref{thm:graphic-reduction} excludes an \(M(K_4)\)-minor
from each factor matroid.  Hence every cyclic block of either graph
is a series--parallel network by Fact~\ref{fact:series-parallel-minors}.
At least one factor is a cactus.  Otherwise, each graph has a
cyclic block that is not a cycle.  Applying
Lemma~\ref{lem:diamond-minor-in-block} to these blocks gives an
\(M(K_4-e)\)-minor in each factor matroid, and
Proposition~\ref{thm:factor-minors} gives an
\(M_{\F}(K_4-e,K_4-e)\)-minor of \(M_{\F}(G,H)\).
This contradicts Proposition~\ref{prop:specified-minors}\textup{(i)}
and the minor-closedness of the regular class.

After interchanging the factors, assume that \(G\) is a cactus.
If \(G\) has a cyclic block \(C_m\) with \(m\ge4\), then for
every cyclic block \(H_j\) of \(H\),
Proposition~\ref{prop:block-decomposition} identifies
\(M_{\F}(C_m,H_j)\) as a direct summand of \(M_{\F}(G,H)\).
It is regular, so Theorem~\ref{thm:local-classification} implies
that \(H_j\) is outerplanar.  The remaining nonempty blocks are
bridges; hence \(H\) is outerplanar, giving \textup{(i)(a)}.  If all cyclic blocks of
\(G\) are triangles, the series--parallel condition already proved
for \(H\) gives \textup{(i)(b)}.

Conversely, suppose one of the conditions in~\textup{(i)} holds.
In either case \(G\) is a cactus.  By
Proposition~\ref{prop:block-decomposition}, it suffices to check
the summand for each pair of blocks.  For a pair of cyclic blocks
\(C_m\) of \(G\) and \(H_j\) of \(H\), condition~\textup{(a)}
makes \(H_j\) outerplanar, whereas condition~\textup{(b)} gives
\(m=3\) and \(H_j\) series--parallel.
Theorem~\ref{thm:local-classification} proves regularity in both
cases.  A pair containing a bridge contributes the cycle matroid
of the other block or \(U_{1,1}\), and is also regular.
Closure under direct sums proves~\textup{(i)}.

For~\textup{(ii)}, suppose the product is cographic.  It is
regular, so~\textup{(i)} allows us to assume that \(G\) is a cactus.
Fix a cyclic block \(C_m\) of \(G\).  For each cyclic block
\(H_j\) of \(H\), the summand \(M_{\F}(C_m,H_j)\) is cographic,
and Theorem~\ref{thm:local-classification} makes \(H_j\) outerplanar.
Thus \(H\) is outerplanar.
Conversely, if \(G\) is a cactus and \(H\) is outerplanar, then
each pair of cyclic blocks gives a cographic summand by
Theorem~\ref{thm:local-classification}.  A pair containing a bridge
contributes the cycle matroid of a planar block or \(U_{1,1}\).
Every summand is therefore cographic, which proves~\textup{(ii)}.

Finally, each graph has a \(C_3\)-minor.  By
Proposition~\ref{thm:factor-minors} and \eqref{eq:two-cycle-identity},
the product has an \(M^*(K_{3,3})\)-minor, which is not graphic
by Fact~\ref{fact:whitney-planarity}.  The product is therefore
not graphic.  The binary assertion follows from
Lemma~\ref{lem:prime-field-reduction} and
Fact~\ref{fact:matroid-classes}.
\end{proof}

\begin{example}
Let \(G\) be the graph obtained by identifying one vertex of \(C_3\)
with one vertex of \(C_4\).  The cyclic blocks of \(G\) are \(C_3\) and \(C_4\).  Proposition~\ref{prop:block-decomposition} and \eqref{eq:two-cycle-identity} identify the decomposition
\[
 M_{\F}(G,C_5)= M_{\F}(C_3,C_5)\oplus M_{\F}(C_4,C_5)\cong M^*(K_{3,5})\oplus M^*(K_{4,5}).
\]
Thus \(M_{\F}(G,C_5)\) is cographic and, by
Theorem~\ref{thm:global-classification}, not graphic.
\end{example}
\subsection{The Main Theorem and its consequences}\label{subsec:regular-two-factor-proof}
\begin{proof}[Proof of the \mainthm{}]
If one factor is free, interchange the factors if necessary and write
\(M=U_{q,q}\).  Then
\(T_{\F}(M,N)\cong N^{\oplus q}\) by
Proposition~\ref{prop:tensor-direct-sums}.  The classes of regular,
graphic, and cographic matroids are closed under direct sums and
minors.  Since \(q\ge1\), the product belongs to each of these
classes exactly when \(N\) does.

Suppose now that both factors are nonfree.  If the product is
regular, Theorem~\ref{thm:graphic-reduction} implies that both
factors are graphic.  Choose simple graph representations
\(M=M(G)\), \(N=M(H)\); the tensor products agree by
Proposition~\ref{thm:basic-represented-product}.
Theorem~\ref{thm:global-classification}\textup{(i)} and
Lemma~\ref{lem:regular-sp-outerplanar} yield \textup{(R1)} or
\textup{(R2)}, after possibly interchanging the factors.
Conversely, either condition supplies simple graph representations
of the required kinds by the lemma, and the same theorem gives
regularity.  This proves~(i).

If the product is cographic, it is regular.  In the present nonfree
case, we may therefore use simple graphic representatives \(G,H\).
Theorem~\ref{thm:global-classification}\textup{(ii)} and
Lemma~\ref{lem:regular-sp-outerplanar} yield \textup{(R1)}, after
possibly interchanging the factors.  Conversely, under
\textup{(R1)}, Lemma~\ref{lem:regular-sp-outerplanar} supplies
simple graph representations with \(G\) a cactus and \(H\)
outerplanar, so Theorem~\ref{thm:global-classification}\textup{(ii)}
gives cographicity.  This proves~(ii).

For~(iii), two nonfree factors contain \(U_{2,3}\)-minors, so their
product has \(T_{\F}(U_{2,3},U_{2,3})\cong M^*(K_{3,3})\) as a minor
by Proposition~\ref{thm:factor-minors} and
\eqref{eq:two-cycle-identity}.  This minor is not graphic by
Fact~\ref{fact:whitney-planarity}, proving~(iii).

Part~(iv) follows from Lemma~\ref{lem:prime-field-reduction}
in characteristic two.  In other characteristics, the product is
\(\F\)-representable, so the binary--regular equivalence in
Fact~\ref{fact:matroid-classes} applies.
\end{proof}

For connected factors, the cactus condition reduces to a single
circuit component.
\begin{corollary}\label{cor:regular-connected}
Let \(M,N\) be connected simple nonfree binary matroids representable
over a common field \(\F\).
Then \(T_{\F}(M,N)\) is regular if and only if, after possibly interchanging
the factors, \(M\cong U_{m-1,m}\) for some \(m\ge3\), and
\[
 \begin{cases}
 N\text{ is series--parallel},&m=3,\\
 N\text{ is outerplanar},&m\ge4.
 \end{cases}
\]
It is cographic if and only if, after possibly interchanging the factors,
\(M\cong U_{m-1,m}\) for some \(m\ge3\) and \(N\) is outerplanar.
It is not graphic.
\end{corollary}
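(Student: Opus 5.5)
This is a specialization of the \mainthm{} to connected factors. The plan is to determine which connected simple matroids satisfy each side of (R1) and (R2), and then read off the three assertions.

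First, identify the connected cactus matroids. A connected simple nonfree matroid has a single connected component, and that component cannot be a coloop. So a connected nonfree cactus matroid is exactly \(U_{m-1,m}\) for some \(m\ge3\). It is triangular exactly when \(m=3\). The other side needs no work: \(N\) is only required to be outerplanar or series--parallel, and these conditions are intrinsic to \(N\). Connectedness of \(N\) plays no role there beyond noting that \(N\) is nonfree.

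For regularity, apply \mainthm{}(i); the free alternative is excluded by hypothesis. Suppose regularity holds. After possibly interchanging the factors, (R1) or (R2) holds with \(M\) a connected cactus matroid, so \(M\cong U_{m-1,m}\). Under (R1), \(N\) is outerplanar, which implies series--parallel, so the displayed condition holds whether \(m=3\) or \(m\ge4\). Under (R2), we have \(m=3\) and \(N\) series--parallel.

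Conversely, assume the displayed condition. If \(m=3\), then \(M\) is a triangular cactus and \(N\) is series--parallel, so (R2) holds. If \(m\ge4\), then \(M\) is a cactus and \(N\) is outerplanar, so (R1) holds.

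For cographicity, \mainthm{}(ii) gives cographicity exactly when (R1) holds up to interchange. By the identification above, this means \(M\cong U_{m-1,m}\) with \(m\ge3\) and \(N\) outerplanar. Non-graphicity follows from \mainthm{}(iii), since neither factor is free.

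There is no genuine obstacle here. The only point needing care is the implication "outerplanar \(\Rightarrow\) series--parallel" used in the case \(m=3\) under (R1). This holds by definition, since outerplanar means having neither an \(M(K_4)\)- nor an \(M(K_{2,3})\)-minor, which in particular excludes \(M(K_4)\).
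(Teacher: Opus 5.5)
Your proposal is correct and follows essentially the same route as the paper. The paper's proof likewise notes that a connected nonfree cactus factor is a circuit matroid $U_{m-1,m}$ (namely $U_{2,3}$ under \textup{(R2)}), uses the definitional implication ``outerplanar $\Rightarrow$ series--parallel'' to merge the two regular cases, and reads off cographicity and non-graphicity from parts \textup{(ii)} and \textup{(iii)} of the Main Theorem; your explicit converse case split ($m=3$ gives \textup{(R2)}, $m\ge4$ gives \textup{(R1)}) just spells out what the paper leaves implicit.
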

\begin{proof}
Under \textup{(R1)}, the connected nonfree cactus factor is a
circuit matroid; under \textup{(R2)}, it is \(U_{2,3}\).
Since every outerplanar matroid is series--parallel, part~(i) of
the \mainthm{} gives exactly the stated regular cases, up to
interchange of the factors.  Parts~(ii) and~(iii) give the
remaining assertions.
\end{proof}

The four two-factor products in
Proposition~\ref{prop:specified-minors}\textup{(i)--(ii)} also give
an intrinsic characterization by factor minors.
\begin{corollary}\label{cor:regular-forbidden-pairs}
Let \(M,N\) be simple nonfree binary matroids representable over
a common field \(\F\).  Then \(T_{\F}(M,N)\) is regular if and only if none of the pairs
\[
 (U_{2,3},M(K_4)),\qquad
 (M(K_4-e),M(K_4-e)),\qquad
 (U_{3,4},M(K_{2,3}))
\]
occurs as factor minors of \((M,N)\) in either order.
Here \(e\in E(K_4)\) and \(U_{3,4}\cong M(C_4)\).
The product is cographic if and only if the first two pairs and
\((U_{2,3},M(K_{2,3}))\) are all excluded in either order.
\end{corollary}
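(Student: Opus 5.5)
The plan is to derive Corollary~\ref{cor:regular-forbidden-pairs} from part~(i) of the \mainthm{}, using Proposition~\ref{prop:specified-minors} and Proposition~\ref{thm:factor-minors} for one direction and a translation of (R1)/(R2) into excluded minors for the other.

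\emph{Necessity (regular case).} Suppose one of the listed pairs occurs as factor minors, say \(M\) has a minor \(N_1\) and \(N\) has a minor \(N_2\). By Proposition~\ref{thm:factor-minors}, \(T_{\F}(N_1,N_2)\) is isomorphic to a minor of \(T_{\F}(M,N)\). The three pairs are, as graph pairs, \((C_3,K_4)\), \((K_4-e,K_4-e)\) and \((C_4,K_{2,3})\), using \(U_{2,3}\cong M(C_3)\) and \(U_{3,4}\cong M(C_4)\). By Proposition~\ref{thm:basic-represented-product} these products agree with \(M_{\F}(G,H)\), and Proposition~\ref{prop:specified-minors}\textup{(i)} shows they are nonregular over every field. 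Factor interchange is a natural isomorphism, so either order is covered. Minor-closedness (Fact~\ref{fact:minor-direct-sums}) then shows the product is nonregular. For cographicity the same argument applies to the pair \((U_{2,3},M(K_{2,3}))\), using Proposition~\ref{prop:specified-minors}\textup{(ii)}. The first two pairs are already excluded by cographic\(\Rightarrow\)regular (Fact~\ref{fact:matroid-classes}).

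\emph{Sufficiency.} Assume no listed pair occurs. Both factors are simple and nonfree, so each has a \(U_{2,3}\)-minor (the observation before Theorem~\ref{thm:graphic-reduction}). The first pair being excluded then means neither factor has an \(M(K_4)\)-minor, so both are series--parallel. By Lemma~\ref{lem:regular-sp-outerplanar} both are graphic, say \(M=M(G)\) and \(N=M(H)\) with cyclic blocks that are series--parallel networks.

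Excluding \((M(K_4-e),M(K_4-e))\) forces one factor, say \(M\) after interchange, to be a cactus. Otherwise each graph has a cyclic block that is not a cycle, and Lemma~\ref{lem:diamond-minor-in-block} supplies \(M(K_4-e)\)-minors in both factors. There are now two cases.
\begin{itemize}
\item If every circuit component of \(M\) is \(U_{2,3}\), then (R2) holds.
\item Otherwise \(M\) has a circuit component \(U_{m-1,m}\) with \(m\ge4\), and contraction gives a \(U_{3,4}\)-minor. Excluding the third pair means \(N\) has no \(M(K_{2,3})\)-minor, so \(N\) is outerplanar and (R1) holds.
\end{itemize}
Part~(i) of the \mainthm{} then gives regularity.

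\emph{Cographic sufficiency.} As above, one factor \(M\) is a cactus and both factors are series--parallel. Excluding \((U_{2,3},M(K_{2,3}))\) in either order, together with \(M\) having a \(U_{2,3}\)-minor, makes \(N\) free of \(M(K_{2,3})\)-minors. Hence \(N\) is outerplanar, (R1) holds, and part~(ii) of the \mainthm{} gives cographicity.

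The main obstacle is the cactus step: passing from ``not a cactus matroid'' to an \(M(K_4-e)\)-minor. This requires first having graphic representations of both factors, which is why the \(M(K_4)\)-exclusion has to be applied before Lemma~\ref{lem:diamond-minor-in-block}. Everything else is bookkeeping with Proposition~\ref{thm:factor-minors} and the \mainthm{}.
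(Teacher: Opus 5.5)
Your proposal is correct and follows essentially the same route as the paper. Necessity comes from Proposition~\ref{thm:factor-minors} together with Proposition~\ref{prop:specified-minors}; sufficiency excludes \(M(K_4)\) to obtain graphic series--parallel factors, then excludes \((M(K_4-e),M(K_4-e))\) to obtain a cactus factor, and then splits into the triangular case (R2) and the \(U_{3,4}\)-minor case (R1), with your cographic step only marginally streamlined relative to the paper's.
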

\begin{proof}
If a listed pair occurs as factor minors, Proposition~\ref{thm:factor-minors}
gives its represented product as a minor of \(T_{\F}(M,N)\).
Proposition~\ref{prop:specified-minors} proves the necessity of the
respective exclusions.

For regularity, suppose none of the three displayed pairs occurs in either order.  Each simple nonfree factor has a
\(U_{2,3}\)-minor.  Exclusion of the first pair therefore forces
both factors to avoid \(M(K_4)\).  By
Lemma~\ref{lem:regular-sp-outerplanar}, choose simple graphic
representations \(M=M(G)\) and \(N=M(H)\), whose cyclic blocks
are series--parallel.  At least one graph is a cactus: otherwise
Lemma~\ref{lem:diamond-minor-in-block} gives an \(M(K_4-e)\)-minor
in each factor.  Interchange the factors so that \(G\) is a cactus.
If every cyclic block of \(G\) is a triangle, condition
\textup{(R2)} of the \mainthm{} holds.
Otherwise \(M(G)\) has a \(U_{3,4}\)-minor.  The third excluded
pair forces \(M(H)\) to avoid \(M(K_{2,3})\), so condition
\textup{(R1)} holds.  The main theorem gives regularity in either case.

For cographicity, suppose all three prescribed pairs are excluded.
The exclusion of \((U_{2,3},M(K_{2,3}))\) also rules out
\((U_{3,4},M(K_{2,3}))\), since \(U_{2,3}\) is a minor of
\(U_{3,4}\).  The preceding argument therefore gives simple graph
representations \(M=M(G)\) and \(N=M(H)\) with \(G\) a cactus.
The \(U_{2,3}\)-minor in \(M(G)\) now forces \(M(H)\) to avoid
\(M(K_{2,3})\).
Thus \textup{(R1)} holds and the product is cographic by
part~\textup{(ii)} of the \mainthm{}.
\end{proof}

\subsection{Higher products and arbitrary factors}\label{subsec:higher-products}
We classify higher products by the number of nonfree factors.
\begin{theorem}\label{thm:regular-higher}
Let \(\F\) be a field, \(s\ge2\), and \(M_1,\ldots,M_s\) be simple
binary matroids with nonempty ground sets, all representable over \(\F\).  Set \(I=\{i:M_i\text{ is nonfree}\}\) and
\(q=\prod_{i\notin I}|E(M_i)|\), where an empty product is one.
Let \(P_I\) be the represented tensor product over \(\F\) of the factors indexed by \(I\),
with \(P_\varnothing=U_{1,1}\) and \(P_{\{j\}}=M_j\).  Then
\[
 T_{\F}(M_1,\ldots,M_s)\cong P_I^{\oplus q}.
\]
\begin{enumerate}[label=\textup{(\roman*)},leftmargin=2.5em,itemsep=0pt,parsep=0pt,topsep=3pt]
\item If \(I=\varnothing\), the product is free.
\item If \(I=\{j\}\), then for each of the classes of regular,
graphic, and cographic matroids, the product belongs to that class
if and only if \(M_j\) does.
\item If \(|I|=2\), write \(I=\{i,j\}\).  The product is not graphic.  It is
regular if and only if \(M_i,M_j\) satisfy \textup{(R1)} or
\textup{(R2)} of the \mainthm{}, and cographic if and only if
they satisfy \textup{(R1)}, in each case allowing interchange
of \(M_i\) and \(M_j\).
\item If \(|I|\ge3\), the product is nonregular, and hence
neither graphic nor cographic.
\end{enumerate}
In characteristic two the product is binary; in other
characteristics it is binary if and only if it is regular.
\end{theorem}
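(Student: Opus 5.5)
We first prove the displayed decomposition and then read off (i)--(iv). By Proposition~\ref{thm:basic-represented-product}, permuting factors induces a natural isomorphism, so we may list the free factors last. A simple free matroid on a nonempty ground set is \(U_{q_i,q_i}\) with \(q_i=|E(M_i)|\ge1\), and \(U_{q_i,q_i}\otimes\) (grouping the remaining coordinates into one matrix, as in the proof of Proposition~\ref{prop:tensor-direct-sums}) contributes a factor \(q_i\) of direct summands by \eqref{eq:free-factor}. Iterating over the free coordinates gives \(T_{\F}(M_1,\ldots,M_s)\cong P_I^{\oplus q}\). When \(I=\varnothing\), all factors are identity matrices, so the Kronecker product is \(I_q\), i.e.\ \(U_{1,1}^{\oplus q}\). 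When \(|I|=1\), the product of the nonfree factor alone is just \(M_j\) represented by \(A_j\). This settles the decomposition and part (i).

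For (ii)--(iv), since \(q\ge1\) and the classes of regular, graphic, cographic, and binary matroids are closed under direct sums and minors (Fact~\ref{fact:minor-direct-sums}), \(P_I^{\oplus q}\) lies in such a class exactly when \(P_I\) does. Part (ii) is then immediate. For (iii), \(P_I=T_{\F}(M_i,M_j)\) with both factors simple and nonfree, so parts (i)--(iii) of the \mainthm{} apply directly; nongraphicity comes from part (iii) there, since neither factor is free.

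For (iv), each nonfree simple factor \(M_i\), \(i\in I\), has a \(U_{2,3}\)-minor by the observation preceding Theorem~\ref{thm:graphic-reduction}. Choosing three indices in \(I\), we fix one nonloop element in each other coordinate of \(I\) and delete the rest. By the remark after Proposition~\ref{thm:factor-minors}, this realises the product over those three indices as a restriction; alternatively, we may apply Proposition~\ref{thm:factor-minors} with \(U_{1,1}\)-minors in the extra coordinates, using \eqref{eq:free-factor}. In either form, \(T_{\F}(U_{2,3},U_{2,3},U_{2,3})\) is a minor of \(P_I\). This minor is nonregular by Proposition~\ref{prop:specified-minors}(iii), so \(P_I\), and hence the full product, is nonregular, and therefore neither graphic nor cographic by Fact~\ref{fact:matroid-classes}.

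The final sentence follows from Lemma~\ref{lem:prime-field-reduction} in characteristic two, and otherwise from \(\F\)-representability together with Fact~\ref{fact:matroid-classes}. The main point requiring care is the reduction of \(P_I\) to the three-factor minor. This is a bookkeeping check that fixing a nonloop in the surplus coordinates commutes with the grouping into matrices, and it holds because tensoring with a fixed nonzero vector is injective.
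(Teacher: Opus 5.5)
Your proposal is correct and follows essentially the same route as the paper: the free-factor identity gives the decomposition, closure under direct sums and minors reduces class membership to \(P_I\), the Main Theorem handles \(|I|\le2\), and for \(|I|\ge3\) three \(U_{2,3}\)-minors together with nonloop restrictions in the remaining coordinates yield \(T_{\F}(U_{2,3},U_{2,3},U_{2,3})\), which Proposition~\ref{prop:specified-minors}(iii) makes nonregular. The only cosmetic difference is that you find this minor inside \(P_I\), whereas the paper finds it directly in the full product; this is immaterial since \(q\ge1\).
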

\begin{proof}
Apply the free-factor identity in
Proposition~\ref{prop:tensor-direct-sums} to each free coordinate
to obtain the direct-sum decomposition.  Since \(q\ge1\) and
the four classes are closed under minors and direct sums,
class membership is determined by \(P_I\).
The cases \(|I|\le2\) follow from
the definitions of \(P_\varnothing\) and \(P_{\{j\}}\), and the \mainthm{}.
If \(|I|\ge3\), choose three nonfree factors and take their
\(U_{2,3}\)-minors; restrict every remaining factor to a nonloop.
Proposition~\ref{thm:factor-minors} gives
\(T_{\F}(U_{2,3},U_{2,3},U_{2,3})\) as a minor.  It is nonregular
over every field by Proposition~\ref{prop:specified-minors}\textup{(iii)}.
The binary assertion follows from the prime-field reduction and the
binary--regular criterion, as in the \mainthm{}.
\end{proof}

\begin{corollary}\label{cor:repeated-factors}
Let \(M\) be a simple binary matroid with a nonempty ground set,
representable over a field \(\F\).
\begin{enumerate}[label=\textup{(\roman*)},leftmargin=2.5em]
\item \(T_{\F}(M,M)\) is graphic if and only if \(M\) is free.
Each of regularity and cographicity is equivalent to \(M\) being
a cactus matroid.
\item For \(s\ge3\), each of graphicity, regularity, and
cographicity of \(T_{\F}(M,\ldots,M)\) is equivalent to \(M\)
being free.
\end{enumerate}
All these products are binary in characteristic two; in other
characteristics they are binary exactly in the listed regular cases.
\end{corollary}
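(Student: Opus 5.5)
The plan is to specialize the Main Theorem and Theorem~\ref{thm:regular-higher} to equal factors. If \(M\) is free, say \(M=U_{q,q}\) with \(q\ge1\), then Proposition~\ref{prop:tensor-direct-sums}, applied in each coordinate, identifies every power \(T_{\F}(M,\ldots,M)\) with a free matroid. A free matroid is graphic, cographic and regular, so both parts hold in the free case. From now on we assume \(M\) is nonfree.

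For part~(i), part~(iii) of the \mainthm{} with \(N=M\) nonfree shows at once that \(T_{\F}(M,M)\) is not graphic. Next suppose \(T_{\F}(M,M)\) is regular. Part~(i) of the \mainthm{} gives (R1) or (R2) for the pair \((M,M)\), and the interchange is immaterial because both factors are \(M\). In either case \(M\) is a cactus matroid.

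Conversely, suppose \(M\) is a cactus matroid. The key point is that a cactus matroid is outerplanar. By Lemma~\ref{lem:regular-sp-outerplanar}, \(M=M(G)\) for a simple cactus graph \(G\), and every block of \(G\) is a bridge or a cycle. Such blocks are outerplanar, so \(G\) is outerplanar by the block characterization of outerplanar graphs. The converse assertion of Lemma~\ref{lem:regular-sp-outerplanar} then shows that \(M\) has neither an \(M(K_4)\)- nor an \(M(K_{2,3})\)-minor, so \(M\) is outerplanar. Hence (R1) holds with both factors equal to \(M\). Parts~(i) and~(ii) of the \mainthm{} now give regularity and cographicity. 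Since cographic matroids are regular, cographicity is also equivalent to \(M\) being a cactus matroid.

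For part~(ii), with \(s\ge3\) and \(M\) nonfree, we apply Theorem~\ref{thm:regular-higher} with \(I=[s]\). Then \(|I|\ge3\), so the product is nonregular, and therefore neither graphic nor cographic. Combined with the free case, this proves part~(ii).

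The binary assertions follow from Lemma~\ref{lem:prime-field-reduction} in characteristic two. In other characteristics they follow from part~(iv) of the \mainthm{}, or from the last sentence of Theorem~\ref{thm:regular-higher}.

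The only step that needs any care is checking that a cactus matroid is outerplanar in the sense of the paper. This goes through the graph representation, as described above. Everything else is direct bookkeeping with the earlier theorems.
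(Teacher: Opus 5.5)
Your proposal is correct and follows the paper's proof essentially step for step: the free case, the Main Theorem with $N=M$ for part~(i), Theorem~\ref{thm:regular-higher} with $|I|\ge3$ for part~(ii), and the prime-field reduction for binarity. The only real difference is that you check ``cactus $\Rightarrow$ outerplanar'' through a graph representation and the block characterization, where the paper just notes that a cactus matroid is a direct sum of circuit matroids and coloops.

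Two small points should be made explicit. First, in the free case of part~(i) you also need that a free simple matroid is itself a cactus matroid, since all its components are coloops. Second, graphicity of a cactus matroid should be justified directly: it is the cycle matroid of a disjoint union of cycles and single edges. The cactus clause of Lemma~\ref{lem:regular-sp-outerplanar} presupposes a graph, and its first clause would require the absence of an $M(K_4)$-minor, which is part of what you are proving.
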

\begin{proof}
If \(M\) is free, all the products considered are free, and
\(M\) is a cactus matroid.  Assume henceforth that \(M\) is
nonfree.  For \textup{(i)}, the \mainthm{} excludes graphicity
and forces \(M\) to be a cactus matroid whenever the product
is regular.  Conversely, a cactus matroid is outerplanar, since
it is a direct sum of circuit matroids and coloops, so
\textup{(R1)} gives cographicity and hence regularity.
For \textup{(ii)}, all \(s\) factors are nonfree, so
Theorem~\ref{thm:regular-higher} excludes regularity and hence
graphicity and cographicity.  The binary assertions follow from
Lemma~\ref{lem:prime-field-reduction} and
Fact~\ref{fact:matroid-classes}.
\end{proof}

Finally, let \(s\ge2\), and let \(M_1,\ldots,M_s\) be arbitrary
finite binary matroids representable over a common field \(\F\).
Proposition~\ref{prop:regular-simplification} reduces each of the
four class-membership questions to the product of their
simplifications.  If one simplification is empty, the product is
an all-loop matroid, possibly empty, and belongs to all four
classes.  Otherwise the \mainthm{} and
Theorem~\ref{thm:regular-higher} apply to the simplifications.

\section{Proof of Proposition~\ref*{prop:specified-minors}}\label{sec:minor-proofs}
This section proves Proposition~\ref{prop:specified-minors} by
explicit deletion and contraction calculations.
\subsection{Contraction criteria}
All integer matrices are interpreted over \(\F\) as in
Section~\ref{sec:preliminaries}.  Rows of each Kronecker product
are in lexicographic order, with the first factor coordinate first;
row and column indices start at \(1\).  Whenever a set of column
indices is listed, the displayed order is used in the corresponding
submatrix.
For four retained columns, determinant lists use the pair order
\((1,2),(1,3),(1,4),(2,3),(2,4),(3,4)\).
In the characteristic-two cases below, all computations are over
\(\operatorname{GF}(2)\), by Lemma~\ref{lem:prime-field-reduction}.
The following lemmas turn the specified matrices into minor containments.
\begin{lemma}
\label{prop:u24-contraction}
Let \(K\) be a full row rank matrix over a field \(\F\) with
\(r\ge2\) rows and columns labelled by \(E\).  Let
\(C\subseteq E\) have \(r-2\) elements and
\(R\subseteq E\setminus C\) have four elements.  Write
\(K_R=[\boldsymbol{k}_1|\boldsymbol{k}_2|\boldsymbol{k}_3|\boldsymbol{k}_4]\).
Suppose that \(\rank(K_C)=r-2\) and there is a rank-two matrix
\(L\in\F^{2\times r}\) such that
\[
LK_C=0,\qquad \det[L\boldsymbol{k}_i\,|\,L\boldsymbol{k}_j]\ne0
\quad(1\le i<j\le4).
\]
Then \((M[K]/C)|R\cong U_{2,4}\).
\end{lemma}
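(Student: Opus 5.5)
The plan is to realize the contraction \(M[K]/C\) by an explicit linear map and then read off the restriction to \(R\) from \(2\times2\) determinants.

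First, \(C\) is independent in \(M[K]\), since \(\rank(K_C)=r-2=|C|\). Let \(W=\col(K_C)\), a subspace of \(\F^r\) of dimension \(r-2\). The rows of \(L\) lie in the left kernel of \(K_C\), because \(LK_C=0\). That left kernel has dimension \(r-(r-2)=2\), and \(L\) has rank two, so \(\row(L)\) is exactly the annihilator of \(W\). Hence \(\ker(L)=W\), and \(L\) induces an isomorphism \(\F^r/W\to\F^2\).

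Second, we use the standard description of contraction: for \(X\subseteq E\setminus C\),
\[
r_{M[K]/C}(X)=r_{M[K]}(X\cup C)-r_{M[K]}(C)=\dim\bigl(\operatorname{span}(K_X)+W\bigr)-\dim W,
\]
which is the rank of the image of \(K_X\) in \(\F^r/W\). Under the isomorphism above this equals \(\rank(LK_X)\). So \(M[K]/C\) restricted to \(E\setminus C\) is represented by \(LK_{E\setminus C}\), consistent with \cite[Proposition~3.2.6]{oxley}. Since \(K\) has full row rank, we do not even need that hypothesis beyond ensuring \(r\ge2\).

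Third, we restrict to \(R\). The matrix \(LK_R=[L\boldsymbol k_1|\cdots|L\boldsymbol k_4]\) is \(2\times4\), and by hypothesis every pair of its columns has nonzero determinant. Therefore every \(2\)-subset of \(R\) is independent, every \(3\)-subset is dependent in rank two, and the rank is two. This is exactly \(U_{2,4}\), with the isomorphism induced by the labelling of \(R\).

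The only point needing care is the identification \(\ker L=W\), which is where the rank conditions on \(K_C\) and \(L\) are used. Everything else is routine.
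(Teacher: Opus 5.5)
Your proof is correct and follows essentially the same route as the paper: both deduce \(\ker(L)=\col(K_C)\) from the rank hypotheses, so that \(L\) acts as the quotient map by \(\col(K_C)\) and \((M[K]/C)|R=M[LK_R]\), after which the six nonzero \(2\times2\) determinants give \(U_{2,4}\). The only difference is cosmetic: you use the contraction rank formula \(r(X\cup C)-r(C)\), whereas the paper compares linear dependencies among the columns of \(LK_X\) and \(K_{C\cup X}\) and applies the independent-set description of contraction.
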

\begin{proof}
The rank assumptions and \(LK_C=0\) give
\(\ker(L)=\col(K_C)\).  Hence, for \(X\subseteq E\setminus C\)
and \(\boldsymbol\alpha\in\F^X\),
\[
 LK_X\boldsymbol\alpha=0
 \quad\Longleftrightarrow\quad
 K_X\boldsymbol\alpha=K_C\boldsymbol\beta
 \text{ for some }\boldsymbol\beta\in\F^C.
\]
Since the columns of \(K_C\) are independent, the columns of
\(LK_X\) are independent if and only if those of \(K_{C\cup X}\)
are independent.  This is exactly the condition that \(X\) be
independent in \(M[K]/C\)
\cite[Proposition~3.1.7]{oxley}.  Thus
\((M[K]/C)|R=M[LK_R]\).
The six nonzero \(2\times2\) determinants show that every two
columns of \(LK_R\) are independent, so this rank-two matroid
on four elements is \(U_{2,4}\).
\end{proof}
In characteristic two, the represented products considered here
are binary, so they have no \(U_{2,4}\)-minor.  The next lemma
detects an \(F_7\)-minor.
\begin{lemma}
\label{prop:f7-contraction}
Let \(K\) be a full row rank matrix over \(\operatorname{GF}(2)\)
with \(r\ge3\) rows and columns labelled by \(E\).  Let
\(C\subseteq E\) have \(r-3\) elements and
\(R\subseteq E\setminus C\) have seven elements.  Suppose that
\(\rank(K_C)=r-3\) and there is a rank-three matrix
\(L\in\operatorname{GF}(2)^{3\times r}\) such that \(LK_C=0\)
and the seven columns of \(LK_R\) are precisely the nonzero vectors
of \(\operatorname{GF}(2)^3\).  Then
\[
 (M[K]/C)|R\cong F_7.
\]
\end{lemma}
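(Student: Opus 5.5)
The argument is the GF(2) analogue of the proof of Lemma~\ref{prop:u24-contraction}: first identify the contraction \(M[K]/C\) with the vector matroid of \(LK\) on \(E\setminus C\), then read off \(F_7\) from the columns indexed by \(R\).

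\textbf{Step 1: \(\ker(L)=\col(K_C)\).}
Since \(K\) has full row rank \(r\), the columns of \(K\) span \(\operatorname{GF}(2)^r\), so \(L\) is a linear map \(\operatorname{GF}(2)^r\to\operatorname{GF}(2)^3\). As \(L\) has rank three, \(\dim\ker(L)=r-3\). The hypothesis \(LK_C=0\) gives \(\col(K_C)\subseteq\ker(L)\). Because \(\rank(K_C)=r-3=\dim\ker(L)\), the inclusion is an equality.

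\textbf{Step 2: \(LK_X\) represents the contraction.}
Fix \(X\subseteq E\setminus C\) and \(\boldsymbol\alpha\in\operatorname{GF}(2)^X\). By Step~1,
\[
LK_X\boldsymbol\alpha=0\iff K_X\boldsymbol\alpha=K_C\boldsymbol\beta\text{ for some }\boldsymbol\beta\in\operatorname{GF}(2)^C.
\]
The columns of \(K_C\) are independent, since \(|C|=r-3=\rank(K_C)\). Hence the columns of \(LK_X\) are independent exactly when the columns of \(K_{C\cup X}\) are independent. Since \(C\) is independent in \(M[K]\), this is exactly independence of \(X\) in \(M[K]/C\) \cite[Proposition~3.1.7]{oxley}. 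Restricting to \(X\subseteq R\) gives
\[
(M[K]/C)|R=M[LK_R].
\]

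\textbf{Step 3: identify \(M[LK_R]\).}
By hypothesis, \(LK_R\) is a \(3\times7\) matrix over \(\operatorname{GF}(2)\) whose columns are precisely the seven nonzero vectors of \(\operatorname{GF}(2)^3\). This is the standard representation of the Fano matroid \(F_7=PG(2,2)\) \cite{oxley}, so \(M[LK_R]\cong F_7\).

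There is no real obstacle. The only point needing care is Step~1, the equality \(\ker(L)=\col(K_C)\), and it follows from the dimension count exactly as in the \(U_{2,4}\) case.
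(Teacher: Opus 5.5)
Your proposal is correct and follows the paper's proof essentially step for step: a dimension count gives \(\ker(L)=\col(K_C)\), the linear-dependence argument from Lemma~\ref{prop:u24-contraction} identifies \((M[K]/C)|R\) with \(M[LK_R]\), and the seven nonzero vectors of \(\operatorname{GF}(2)^3\) are the standard representation of \(F_7\). The only difference is that you write out the dependence argument, whereas the paper cites it from the \(U_{2,4}\) case.
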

\begin{proof}
The columns of \(K_C\) are independent and
\(\col(K_C)\subseteq\ker(L)\).  Both spaces have dimension
\(r-3\), so they are equal.  The linear-dependence argument in
the proof of Lemma~\ref{prop:u24-contraction} gives
\((M[K]/C)|R=M[LK_R]\).  The seven columns of \(LK_R\) are
precisely the nonzero vectors of \(\operatorname{GF}(2)^3\), which form the
standard representation of \(F_7\).
\end{proof}

\subsection{The pair \texorpdfstring{\((C_3,K_4)\)}{(C3,K4)}}
\begin{proof}[Proof of Proposition~\ref{prop:specified-minors}\textup{(i)} for this pair]
Label the edges of \(C_3\) by \(1,2,3\) and those of \(K_4\) by
\(12,13,14,23,24,34\), in that order.  Use the reduced incidence matrices
\[
 A_{C_3}=\begin{bmatrix}1&-1&0\\0&1&-1\end{bmatrix},
 \qquad
 A_{K_4}=\begin{bmatrix}
 1&1&1&0&0&0\\-1&0&0&1&1&0\\0&-1&0&-1&0&1
 \end{bmatrix}.
\]
Put \(K=A_{C_3}\otimes A_{K_4}\).  Then \(\rank(K)=6\) and
\(M[K]=M_{\F}(C_3,K_4)\).

Suppose first that \(\operatorname{char}(\F)\ne2\), and choose
\[
\begin{aligned}
 C&=\{(2,12),(2,34),(3,13),(3,24)\},\\
 R&=\{(1,12),(1,13),(1,14),(1,23)\}.
\end{aligned}
\]
Take
\[
 L=\begin{bmatrix}
1&1&0&0&0&0\\
0&-1&1&1&0&1
\end{bmatrix}.
\]
Direct verification gives
\[
 \rank(K_C)=4,\qquad \rank(L)=2,\qquad LK_C=0.
\]
The minor of \(K_C\) on rows \(1,3,4,5\) is \(1\), and the
six column-pair determinants of \(LK_R\) are
\(-1,-1,-1,1,-1,-2\).  They are all nonzero in \(\F\), so
Lemma~\ref{prop:u24-contraction} yields
\((M[K]/C)|R\cong U_{2,4}\).

In characteristic two, choose
\[
\begin{aligned}
 C&=\{(1,12),(1,34),(2,13)\},\\
 R&=\{(2,14),(3,24),(3,23),(3,14),(1,13),(2,12),(2,24)\}.
\end{aligned}
\]
Take
\[
 L=\begin{bmatrix}
1&1&0&1&0&0\\
0&0&0&0&1&0\\
1&1&0&0&0&1
\end{bmatrix}.
\]
Here
\[
 \rank(K_C)=3,\qquad \rank(L)=3,\qquad LK_C=0;
\]
the minor of \(K_C\) on rows \(1,2,3\) is \(1\), and the columns
of \(LK_R\) are precisely the seven nonzero vectors of
\(\operatorname{GF}(2)^3\).  Lemma~\ref{prop:f7-contraction} gives
\((M[K]/C)|R\cong F_7\).  In either characteristic,
Fact~\ref{fact:regular-excluded} proves nonregularity.
\end{proof}

\subsection{The pair \texorpdfstring{\((K_4-e,K_4-e)\)}{(K4-e,K4-e)}}
\begin{proof}[Proof of Proposition~\ref{prop:specified-minors}\textup{(i)} for this pair]
Take \(V(K_4-e)=\{1,2,3,4\}\), with \(e=14\), and orient
all remaining edges from the smaller endpoint to the larger.
Label \(12,13,23,24,34\) by \(1,2,3,4,5\), respectively.
Deleting the row of vertex \(4\) gives
\[
 A_{K_4-e}=\begin{bmatrix}
 1&1&0&0&0\\-1&0&1&1&0\\0&-1&-1&0&1
 \end{bmatrix}.
\]
For \(K=A_{K_4-e}\otimes A_{K_4-e}\), we have
\(\rank(K)=9\) and \(M[K]=M_{\F}(K_4-e,K_4-e)\).

If \(\operatorname{char}(\F)\ne2\), choose
\[
\begin{aligned}
 C&=\{(4,5),(5,1),(3,3),(5,4),(1,2),(2,5),(4,2)\},\\
 R&=\{(1,1),(1,4),(1,5),(2,1)\}.
\end{aligned}
\]
Take
\[
 L=\begin{bmatrix}
0&1&0&0&0&0&0&0&0\\
1&0&1&0&-1&0&0&0&1
\end{bmatrix}.
\]
Then
\[
 \rank(K_C)=7,\qquad \rank(L)=2,\qquad LK_C=0.
\]
The minor of \(K_C\) on rows \(1,3,4,5,6,7,8\) is \(1\).
The six column-pair determinants of \(LK_R\) are
\(-1,-1,-1,1,2,1\), so Lemma~\ref{prop:u24-contraction} gives
\((M[K]/C)|R\cong U_{2,4}\).

In characteristic two, choose
\[
\begin{aligned}
 C&=\{(1,4),(1,1),(2,5),(5,2),(4,5),(4,1)\},\\
 R&=\{(1,2),(5,4),(2,1),(3,2),(2,2),(2,3),(3,3)\}.
\end{aligned}
\]
Take
\[
 L=\begin{bmatrix}
1&1&0&1&1&0&0&0&0\\
0&0&0&0&0&0&0&1&0\\
0&0&1&0&0&0&1&0&1
\end{bmatrix}.
\]
The identities are
\[
 \rank(K_C)=6,\qquad \rank(L)=3,\qquad LK_C=0.
\]
The minor of \(K_C\) on rows \(1,2,3,4,6,7\) is \(1\), and the
columns of \(LK_R\) are precisely the nonzero vectors of
\(\operatorname{GF}(2)^3\).  Thus
\((M[K]/C)|R\cong F_7\) by Lemma~\ref{prop:f7-contraction}.
Fact~\ref{fact:regular-excluded} proves nonregularity over every field.
\end{proof}

\subsection{The pairs involving \texorpdfstring{\(K_{2,3}\)}{K2,3}}
Write \(V(K_{2,3})=\{x_1,x_2\}\sqcup\{y_1,y_2,y_3\}\),
label \(x_i y_j\) by \(ij\), and orient every edge from \(x_i\)
to \(y_j\).  Delete the \(y_3\)-row and use the column order
\(11,12,13,21,22,23\).  The reduced incidence matrix is
\[
 A_{K_{2,3}}=\begin{bmatrix}
 1&1&1&0&0&0\\0&0&0&1&1&1\\-1&0&0&-1&0&0\\0&-1&0&0&-1&0
 \end{bmatrix}.
\]

\begin{proof}[Proof of Proposition~\ref{prop:specified-minors}\textup{(ii)}]
Put \(K=\Delta_3\otimes A_{K_{2,3}}\), so that
\(\rank(K)=8\) and \(M[K]=M_{\F}(C_3,K_{2,3})\), and choose
\[
\begin{aligned}
 C&=\{(1,11),(1,12),(1,13)\},\\
 R&=\{(2,11),(2,12),(2,13),(2,21),(3,21),(2,22),(3,22),(2,23),(3,23)\}.
\end{aligned}
\]
Take
\[
 L=\begin{bmatrix}
0&0&0&0&0&-1&-1&0\\
0&0&0&0&0&0&0&-1\\
0&0&0&0&1&1&1&1\\
0&-1&0&0&0&1&0&0\\
0&-1&0&0&0&0&0&0
\end{bmatrix}.
\]
We have
\[
 \rank(K_C)=3,\qquad \rank(L)=5,\qquad LK_C=0.
\]
Indeed, the minor of \(K_C\) on rows \(1,3,4\) is \(1\), and
the minor of \(L\) in columns \(2,5,6,7,8\) is \(-1\).

To identify the retained columns, give \(K_{3,3}\) the bipartition
\(\{a,b,c\}\sqcup\{x,y,z\}\), orient its edges from the first
part to the second, and let \(A=A_{K_{3,3}}\) be the reduced incidence
matrix with row order \(a,b,c,y,z\) and column order
\[
 ax,ay,az,bx,cx,by,cy,bz,cz.
\]
These edges correspond, in order, to the listed elements of \(R\).
The first five edges form a spanning tree \(T\), and
\(\det A_T=1\).  With
\[
 D_{\mathrm{row}}=\diag(1,1,1,1,-1),\qquad
 D_{\mathrm{col}}=\diag(1,1,1,1,-1,1,-1,1,-1),
\]
direct verification gives
\[
 LK_R=D_{\mathrm{row}}A_T^{-1}A_{K_{3,3}}D_{\mathrm{col}}.
\]
Thus \(LK_R\) is projectively equivalent to \(A_{K_{3,3}}\).
Since \(\ker(L)=\col(K_C)\), the linear-dependence argument in
the proof of Lemma~\ref{prop:u24-contraction} gives
\[
 (M[K]/C)|R=M[LK_R]\cong M(K_{3,3}).
\]
The product is not cographic by Fact~\ref{fact:cographic-excluded}.
\end{proof}

\begin{proof}[Proof of Proposition~\ref{prop:specified-minors}\textup{(i)} for \((C_4,K_{2,3})\)]
Put \(K=\Delta_4\otimes A_{K_{2,3}}\), so that
\(\rank(K)=12\) and \(M[K]=M_{\F}(C_4,K_{2,3})\).
If \(\operatorname{char}(\F)\ne2\), choose
\[
\begin{aligned}
 C&=\{(1,12),(3,22),(3,23),(4,23),(4,12),(4,21),(2,13),(2,22),(3,11),(1,11)\},\\
 R&=\{(2,21),(2,11),(1,13),(2,12)\}.
\end{aligned}
\]
Take
\[
 L=\begin{bmatrix}
1&0&1&1&0&0&-1&0&0&0&0&0\\
0&-1&0&0&0&1&-1&1&1&0&1&0
\end{bmatrix}.
\]
Then
\[
 \rank(K_C)=10,\qquad \rank(L)=2,\qquad LK_C=0.
\]
The minor of \(K_C\) on rows \(2,3,4,5,7,8,9,10,11,12\)
is \(1\).  The six column-pair determinants of \(LK_R\) are
\(-1,-2,-1,-1,-1,-1\), all nonzero in \(\F\).  Hence
\((M[K]/C)|R\cong U_{2,4}\) by Lemma~\ref{prop:u24-contraction}.

In characteristic two, choose
\[
\begin{aligned}
 C&=\{(1,11),(2,22),(3,13),(2,21),(3,21),(1,12),(4,22),(4,13),(1,23)\},\\
 R&=\{(3,12),(3,11),(1,13),(2,23),(4,11),(4,12),(2,11)\}.
\end{aligned}
\]
Take
\[
 L=\begin{bmatrix}
0&0&0&0&0&1&1&1&0&0&0&0\\
1&0&1&1&1&0&0&0&0&1&1&0\\
1&0&1&1&1&0&0&0&0&0&0&1
\end{bmatrix}.
\]
Here
\[
 \rank(K_C)=9,\qquad \rank(L)=3,\qquad LK_C=0.
\]
The minor of \(K_C\) on rows \(1,2,3,4,5,6,7,9,10\) is \(1\),
and the columns of \(LK_R\) are precisely the nonzero vectors of
\(\operatorname{GF}(2)^3\).  Thus
\((M[K]/C)|R\cong F_7\) by Lemma~\ref{prop:f7-contraction}.
The two cases and Fact~\ref{fact:regular-excluded} prove nonregularity.
\end{proof}

\subsection{Three circuit factors}\label{subsec:three-circuit-minor}
\begin{proof}[Proof of Proposition~\ref{prop:specified-minors}\textup{(iii)}]
Use the representation \(K=\Delta_3\otimes\Delta_3\otimes\Delta_3\),
which has rank \(8\).
If \(\operatorname{char}(\F)\ne2\), choose
\[
\begin{aligned}
 C&=\{(1,1,2),(1,2,1),(1,1,3),(3,3,3),(2,2,3),(2,1,1)\},\\
 R&=\{(3,2,2),(3,2,1),(2,3,2),(3,2,3)\}.
\end{aligned}
\]
Take
\[
 L=\begin{bmatrix}
0&0&0&-1&0&1&0&0\\
0&0&0&0&0&0&-1&1
\end{bmatrix}.
\]
We have
\[
 \rank(K_C)=6,\qquad \rank(L)=2,\qquad LK_C=0.
\]
The minor of \(K_C\) on rows \(1,2,3,4,5,7\) is \(1\), and the
six column-pair determinants of \(LK_R\) are \(1,-2,-1,1,1,-1\).
Lemma~\ref{prop:u24-contraction} gives
\((M[K]/C)|R\cong U_{2,4}\).

In characteristic two, choose
\[
\begin{aligned}
 C&=\{(2,3,3),(3,2,3),(3,1,3),(2,2,1),(1,2,2)\},\\
 R&=\{(1,1,3),(2,1,2),(2,1,1),(1,1,2),(1,1,1),(3,1,1),(3,3,1)\}.
\end{aligned}
\]
Take
\[
 L=\begin{bmatrix}
1&1&0&0&0&0&0&0\\
0&0&0&0&1&1&0&0\\
1&0&1&0&1&0&0&1
\end{bmatrix}.
\]
Then
\[
 \rank(K_C)=5,\qquad \rank(L)=3,\qquad LK_C=0.
\]
The minor of \(K_C\) on rows \(1,3,4,5,7\) is \(1\), and the
columns of \(LK_R\) are precisely the seven nonzero vectors of
\(\operatorname{GF}(2)^3\).  Therefore
\((M[K]/C)|R\cong F_7\) by Lemma~\ref{prop:f7-contraction}.
Fact~\ref{fact:regular-excluded} proves the assertion in both cases.
\end{proof}

\section*{Acknowledgements}
This work is supported by the National Natural Science Foundation of China (Grant No. 12571350) and the Guangdong Basic and Applied Basic Research Foundation (Grant Nos. 2026A1515012237 and 2025A1515010457). 
\section*{Declaration of AI use}
During the preparation of this manuscript, the authors used OpenAI's ChatGPT and Codex to assist with English-language editing, \LaTeX{} typesetting, structural organization, and consistency checking of the mathematical exposition.  After using these tools, the authors reviewed and edited the content as needed and take full responsibility for the content of the published article.

\bigskip
\noindent
Houshan Fu\\
School of Mathematics and Information Science, Guangzhou University\\
Guangzhou 510006, Guangdong, P. R. China

\medskip\noindent
Yujiao Ma and Suijie Wang\\
School of Mathematics, Hunan University\\
Changsha 410082, Hunan, P. R. China

\medskip\noindent
Suijie Wang\\
Greater Bay Area Institute for Innovation, Hunan University\\
Changsha 410082, Hunan, P. R. China

\medskip\noindent
Corresponding author: Yujiao Ma.\\
Emails: \href{mailto:fuhoushan@gzhu.edu.cn}{fuhoushan@gzhu.edu.cn} (Houshan Fu);\\
\href{mailto:yujiaoma@hnu.edu.cn}{yujiaoma@hnu.edu.cn} (Yujiao Ma);\\
\href{mailto:wangsuijie@hnu.edu.cn}{wangsuijie@hnu.edu.cn} (Suijie Wang).

\end{document}